\documentclass[12pt]{amsart}
\usepackage{amsmath,amssymb,latexsym, amsthm, amscd, mathrsfs, stmaryrd, mathtools}
\usepackage[linktocpage=true]{hyperref}
\usepackage{cleveref}
\usepackage{xcolor}
\usepackage[all]{xy}
\usepackage{enumitem}
\usepackage{chngcntr}
\usepackage{multirow}
\usepackage{array}
\usepackage{booktabs}
\usepackage{tikz}
\usepackage[sorting=nyt,style=alphabetic,backend=bibtex,hyperref=true,doi=false,maxbibnames=9,maxcitenames=4,url=false]{biblatex}
\addbibresource{bib.bib}

\crefformat{section}{\S#2#1#3} % see manual of cleveref, section 8.2.1
\crefformat{subsection}{\S#2#1#3}
\crefformat{subsubsection}{\S#2#1#3}

%%%%%%%Page stuff%%%%%%%%%%%%%
\setlength{\hoffset}{0pt}
\setlength{\voffset}{0pt}
\setlength{\topmargin}{0pt}
\setlength{\oddsidemargin}{0in}
\setlength{\evensidemargin}{0in}
\setlength{\textheight}{8.75in}
\setlength{\textwidth}{6.5in}
% \AtBeginDocument{%
%  \abovedisplayskip=5pt plus 1pt minus 1pt
%  \abovedisplayshortskip=5pt plus 1pt
%  \belowdisplayskip=5pt plus 1pt minus 1pt
%  \belowdisplayshortskip=5pt plus 1pt minus 1pt
% }
\pagestyle{headings}
%\renewcommand{\baselinestretch}{1.25}

%%%%%%%%theorem stuff%%%%%%%%%
\newtheorem{thm}{Theorem} [section]
\theoremstyle{definition}

%[section]
%[section]
%[section]
%[section]
\theoremstyle{plain}
\newtheorem{prop}[thm]{Proposition}

\newtheorem{lem}[thm]{Lemma}
\newtheorem{cor}[thm]{Corollary}

\numberwithin{equation}{section}

\newcommand{\Hom}{\mathrm{Hom}}
\newcommand{\Ext}{\mathrm{Ext}}

\newcommand{\C}{\mathbb C}

\newcommand{\g}{\mathfrak{g}}
\newcommand{\bo}{\mathfrak{b}}
\newcommand{\gl}{\mathfrak{gl}}
\newcommand{\h}{\mathfrak{h}}

\newcommand{\n}{\mathfrak n}

\newcommand{\one}{{\ov 1}}
\newcommand{\osp}{\mathfrak{osp}}
\newcommand{\ov}{\overline}
\newcommand{\OO}{\mathcal O}

\newcommand{\W}{\mathcal W}

\newcommand{\Z}{\mathbb Z}
\newcommand{\oo}{{\ov 0}}

\makeatletter
\newcommand{\extp}{\@ifnextchar^\@extp{\@extp^{\,}}}
\def\@extp^#1{\mathop{\bigwedge\nolimits^{\!#1}}}
\makeatother

%color

\newcommand{\red}[1]{{\color{red}#1}}

\title[Characters for Projective Modules in the BGG Category $\OO$]{Characters for projective modules in the BGG category $\OO$ for the orthosymplectic Lie superalgebra $\mathfrak{osp}(3|4)$}

\author[A.S. Kannan]{Arun S. Kannan}
\address{Department of Mathematics, Massachusetts Institute of Technology, Cambridge, MA 02139} \email{akannan@mit.edu}

\author[H. Zhu]{Honglin Zhu}
\address{Phillips Exeter Academy, 20 Main Street, Exeter, NH 03833} \email{hzhu2@exeter.edu}

\begin{document}

\begin{abstract}
We determine the Verma multiplicities of standard filtrations of projective modules in integral atypical blocks in the BGG category $\OO$ for the orthosymplectic Lie superalgebras $\osp(3|4)$ by way of translation functors. We then explicitly determine the composition factor multiplicities of Verma modules using BGG reciprocity.
\end{abstract}

\maketitle

\setcounter{tocdepth}{1}
\tableofcontents

%%%%%%%%%%
\section{Introduction}

\subsection{} A central problem in representation theory is understanding the representations of a given algebraic object, like a semisimple Lie algebra, and in particular, determining the irreducible constituents. A class of representations in which this problem is accessible is the BGG category $\OO$ of modules of semisimple Lie algebras. This category exhibits rich and deep theory and a broad survey of results can be found in \cite{humphreys2008representations}. A generalization of semisimple Lie algebras is basic Lie superalgebras, which exhibit many of the same phenomena (for reference, see \cite{cheng2012dualities, musson2012lie}). The BGG category $\OO$ can analogously be defined for basic Lie superalgebras, and many of the results from the semisimple case extend. Among the most conceptual objects in this category are the Verma modules. In this paper, we determine Verma multiplicities of standard filtrations of projective modules of integral atypical highest weight in the BGG category $\OO$ for the basic Lie superalgebra $\osp(3|4)$. We then use BGG reciprocity to determine the composition factors in Verma modules.

\subsection{}
Atypicality of weights is a phenomenon present in Lie superalgebras that has no analogue for semisimple Lie algebras. It allows an integral block in $\OO$ whose degree of atypicality is greater than $0$ to have infinitely many simple modules. The principal block in $\OO$ for $\osp(2m+1|2n)$, which contains the trivial module, always has a nonzero degree of atypicality when $m, n \geq 1$.
 
Atypicality arises due to the presence of isotropic roots (i.e. roots of length zero) in the root system, which expand the notion of linkage beyond the orbit of the Weyl group. For $\osp(2m+1|2n)$, the degree of atypicality is an integer in the range $0$ to $\mathrm{min}(m, n)$, inclusive. In the integral case, any typical (i.e. degree of atypicality $0$) block can be reduced to the semisimple Lie algebra case via an equivalence of categories (cf. \cite{gorelik10.2307/827094}). Therefore, the new cases arise primarily when the degree of atypicality is nonzero.
\subsection{}
The problem of determining the irreducible representations that appear in a Jordan-H{\"o}lder series of a Verma module of a semisimple Lie algebra has a detailed history. For an integral weight, Kazhdan and Lusztig conjectured that these multiplicities could be determined in terms of certain recursively defined polynomials generated from the Weyl group of the semisimple Lie algebra (cf. \cite{Lusztig1979}). The Kazhdan-Lusztig conjecture was proven via geometric methods in the 1980s by Beilinson \& Bernstein (\cite{beilinsonbernstein81}) and Brylinski \& Kashiwara (\cite{brylinski1981kazhdan}).
 
Generalizing to the basic Lie superalgebra case has been difficult because the Weyl group no longer solely dictates linkage, but some progress has been made (cf. \cite{brundan2003kazhdan, Brundan_2016, cheng2011super, Cheng_2015}). An entirely different approach (and therefore solving the problem for certain semisimple Lie algebras in a novel way) was done for $\osp(l | 2n)$ by way of quantum symmetric pairs by Bao and Wang (cf. \cite{bao2017kazhdan, bao2018new}).
 
Nonetheless, these methods do not readily offer concrete multiplicities. By way of translation functors, we explicitly compute standard filtration formulae for projectives. This method is used to solve a similar problem for $\gl(3|1)$ and $\gl(2|2)$ in \cite{KANNAN2019231}, for $G(3)$ in \cite{chengwangg3}, and $D(2|1 ; \zeta)$ in \cite{cheng2019character}.
\subsection{} In this work, we use the tool of translation functors to determine the characters of projective modules in the BGG category $\OO$ for the orthosymplectic Lie superalgebras $\osp(3|4)$. Specifically, we explicitly determine the Verma multiplicities of standard filtration of projective modules in atypical blocks in $\OO$. There are infinitely many inequivalent atypical blocks. Then, BGG reciprocity allows us to convert these formulae to formulae for composition multiplicities, which we also explicitly state.
 
\subsection{} Our general approach to using translation functors is as follows. Given some projective cover $P_\lambda$ for which we wish to deduce Verma multiplicities, we find some $P_\mu$ with known Verma multiplicities and some finite-dimensional representation $N$ such that the Verma module $M_\lambda$ appears in a standard filtration of $P_\mu \otimes N$. If $\lambda$ is the lowest weight appearing among all the weights linked to $\lambda$ appearing in the Verma flag, then $P_\lambda$ is a direct summand for the projection of $P_\mu \otimes N$ on to the block corresponding to $\lambda$. In most cases, it is the only direct summand.
 
A particularly useful set of criteria for determining whether a summand is direct and for verifying indecomposability is stated in Proposition~\ref{filprop}. These criteria follow from similar criteria on tilting modules (cf. \cite{chengwangg3}) derived from the Super Jantzen sum formula (cf. \cite{musson2012lie}). Verifying indecomposability is a non-trivial step, as it is not evident whether or not translation functors yield an indecomposable projective. See \cref{conditions} and \cref{strategy} for explicit details and justification.
 
Our approach shows that in the cases we consider, standard filtrations always have Verma modules with multiplicity $1$ or $2$. By BGG reciprocity, these formulae determine the composition factors for Verma modules in $\OO$.
\subsection{}
In \cref{prelims}, we recall basic structure theorems for $\osp(3|4)$, fix a Cartan subalgebra, a root system, a fundamental system, and define linkage. Also, we recall the BGG category $\OO$, review relevant results in the super case, and offer conditions when Verma modules appear in the standard filtration of projective modules.
 
Section~\ref{sec5} contains our original results. We find standard filtration multiplicities for projective modules of atypical integral highest weight when $\g = \osp(3|4)$. These results are justified using the general facts in \cref{prelims} and the strategy of translation functors. 

Finally, in Section~\ref{sec7}, we compute the composition multiplicities of Verma modules of atypical integral highest weight for $\osp(3|4)$ using the BGG reciprocity.

\subsection*{\textbf{Acknowledgements}} This paper is the result of MIT PRIMES, a program that provides high-school students an opportunity to engage in research-level mathematics and in which the first author mentored the second. The first author would like to thank the Massachusetts Institute of Technology for financial support in the form of a MathWorks fellowship; also, this paper is based upon work supported by the National Science Foundation Graduate Research Fellowship Program under Grant No. 1842490 awarded to the first author. The authors would also like to thank the MIT PRIMES organizers for providing this opportunity. Finally, the authors thank David Vogan and Kevin Coulembier for answering some questions about the project, and Volodymyr Mazorchuk for suggesting revisions during the editorial process.

\section{Preliminaries}\label{prelims}
We shall recall elementary properties about the structure of the Lie superalgebra $\osp(3 | 4)$ and introduce some basic notations. For a more general reference on Lie superalgebras, see \cite{cheng2012dualities} or \cite{musson2012lie}.

\subsection{Basic definitions}
Suppose $V = \mathbb{C}^{3|4} = \mathbb{C}^{3} \oplus \mathbb{C}^4$. Let $\{\ov{1}, \ov{2}, \ov{3}\}$ and $\{1, 2, 3, 4\}$ parametrize the standard bases for the even and odd subspaces of $V$, $\mathbb{C}^{3}$ and $\mathbb{C}^{4}$, respectively. Denote
\[I(3,4) = \{\ov{1}, \ov{2}, \ov{3}; 1, 2, 3, 4\}\]
where we impose the total order

\[\ov{1} < \ov{2} < \ov{3} < 0 < 1 < 2 < 3 < 4.\]
Here, $0$ is introduced for convenience. The Lie superalgebra $\gl(3 | 4)$ is the Lie superalgebra of $7 \times 7$ matrices over $\mathbb{C}$ with bracket to be defined. The basis indexed by $I(3,4)$ for $V$ induces a basis for $\gl(3 | 4)$ given by $\{E_{ij} : i, j \in I(3,4) \}$, where $E_{ij}$ is the elementary matrix with a $0$ in every entry except for a $1$ in the $i$-th row and $j$-th column ($i,j \in I(3,4))$. The even subalgebra $\gl(3|4)_{\ov{0}}$ of $\gl(3 | 4)$ has a basis $\{E_{ij} : i, j < 0, \ i, j > 0 , \ i, j \in I(3,4)\}$ and the odd subspace $\gl(3|4)_{\ov{1}}$ has a basis $\{E_{ij} : i < 0 < j, \ j < 0 < i ,  \ i, j \in I(3,4) \}$. An element that is either purely even or purely odd is said to be homogeneous, and its parity (denoted $|\cdot|$) is $0$ or $1$, respectively. The Lie superbracket is defined on homogeneous elements $x,y \in \gl(3|4)$

\[
    [x,y] = xy - (-1)^{|x||y|}yx
\]
and extended by bilinearity. Define the supertranspose $x^{st}$ of an element $x \in \gl(3|4)$ in $(3|4)$-block form $x = \begin{pmatrix}a & b \\ c & d\end{pmatrix}$ by $x^{st} = \begin{pmatrix}a^t & c^t \\ -b^t & d^t\end{pmatrix}$, where $t$ denotes the regular transpose. Then, we define the Lie superalgebra $\osp(3 | 4)$ by stabilizing a non-degenerate even supersymmetric bilinear form as follows:

\[
    \osp(3 | 4) = \{g \in \gl(3|4) \ | g^{st}\mathfrak{J} + \mathfrak{J}g = 0\},
\]
where if $I_k$ is the $k\times k$ identity matrix, $\mathfrak{J}$ is the $7 \times 7$ matrix in the $(1|1|1|2|2)$-block form

\[
    \mathfrak{J} = 
    \begin{pmatrix}
    1 & 0 & 0 & 0 & 0 \\
    0 & 0 & 1 & 0 & 0 \\
    0 & 1 & 0 & 0 & 0 \\
    0 & 0 & 0 & 0 & I_2 \\
    0 & 0 & 0 & -I_2 & 0
    \end{pmatrix}.
\]
Then, we can realize $\osp(3 | 4)$ as the set of supermatrices of the following form:
\[
    \left(\begin{array}{ccc|cccc}
    0 & a_1 & a_2 & x_1 & x_2 & x_3 & x_4 \\
    -a_2 & h' & 0 & y_1 & y_2 & z_1 & z_2 \\
    -a_1 & 0 & -h' & z_3 & z_4 & y_3 & y_4 \\
    \hline 
    -x_3 & -y_3 & -z_1 & h_1 & b_1 & c_1 & c_2 \\
    -x_4 & -y_4 & -z_2 & b_2 & h_2 & c_2 & c_3 \\
    x_1 & z_3 & y_1 & d_1 & d_2 & -h_1 & -b_2 \\
    x_2 & z_4 & y_2 & d_2 & d_3 & -b_1 & -h_2 \\
    \end{array}\right).
\]
The even subalgebra $\osp(3|4)_{\ov{0}}$ (resp. odd subspace $\osp(3|4)_{\ov{1}}$) consists of the elements in $\osp(3|4)$ that are also in $\gl(3|4)_{\ov{0}}$ (resp. $\gl(3|4)_{\ov{1}}$). As a semisimple Lie algebra, $\osp(3|4)_{\ov{0}}$ is isomorphic to $\mathfrak{sp}(4)\oplus \mathfrak{so}(3)$.

Let $h_j = E_{j, j} - E_{2+j, 2+j}$ for $1 \leq j \leq 2$ and let $h' = E_{\ov{2}, \ov{2}} - E_{\ov{3}, \ov{3}}$, and let $\h$ denote the Cartan subalgebra of diagonal matrices in $\osp(3|4)$ given by:

\[
    \h = \mathbb{C}h_1\oplus\C h_2\oplus\C h'.
\]
Then, consider the dual basis for $\h^*$ given by $\{\delta_1, \delta_2, \epsilon\}$, where 

\[
   \delta_1(h_1) = 1, \delta_1(h_2) = \delta_1(h') = 0
\]
\[
   \delta_2(h_1) = 0, \delta_2(h_2) = 1, \delta_1(h') = 0
\]
\[
    \epsilon(h_1) = \epsilon(h_2) = 0, \epsilon(h') = 1.
\]
For convenience, we shall use the notation $\lambda = (a, b \ | \ c)$ to refer to the weight $\lambda = a\delta_1 + b\delta_2 + c\epsilon \in \h^*$ with $a, b, c \in \mathbb{C}$. Define a bilinear form $(\cdot, \cdot) : \h^* \times \h^* \rightarrow \mathbb{C}$ given by

\[
\begin{aligned}
 (\delta_j, \delta_k) = \delta_{jk}, \  (\epsilon, \epsilon) = -1 \\ 
 (\delta_j, \epsilon) = (\epsilon, \delta_j) = 0,
\end{aligned}
\]
where $1 \leq j, k \leq 2$ and then we extend by bilinearity. We can define the corresponding integral weight lattice $X$ in $\h^*$:
\[
X \coloneqq \Z\delta_1 \oplus \Z\delta_2 \oplus \Z\epsilon .
\]
Furthermore, with this choice of $\h$ we have a triangular decomposition $\osp(3 | 4) = \mathfrak{n}^- \oplus \h \oplus \mathfrak{n}^+$ of lower, diagonal, and upper triangular matrices, respectively, and root system $\Phi = \Phi_{\ov{0}} \cup \Phi_{\ov{1}}$ where the positive roots are given by
\[
\begin{aligned}
\Phi^+=\Phi_{\overline{0}}^+ \cup \Phi_{\overline{1}}^+=\{ 2\delta_1, 2\delta_2, \delta_1 \pm\delta_2, \epsilon \} \cup \{ \delta_1, \delta_2, \delta_1\pm\epsilon, \delta_2\pm\epsilon \},
\end{aligned}
\]
where $\Phi^+_{\oo} = \Phi^+ \cap \Phi_{\oo}$ denotes the positive even roots and $\Phi^+_{\ov{1}} = \Phi^+ \cap \Phi_{\ov{1}}$ denotes the positive odd roots. Call a root $\alpha \in \Phi$ isotropic if $(\alpha, \alpha) = 0$; positive isotropic roots are of the form $\delta_i \pm \epsilon$. Let

\[
\begin{aligned}
\Pi = \{\delta_1 - \delta_2, \epsilon \}\cup\{\delta_{2} - \epsilon\}, \\
\end{aligned}
\]
be a fundamental system for $\Phi$. We can establish the {Bruhat order} on $\h^*$ as follows. Let $\lambda, \mu \in \h^*$. We say $\lambda \geq \mu$ if $\lambda \sim \mu$ and $\lambda - \mu \in \mathbb{Z}_{\geq 0}\Pi$ (i.e the nonnegative sum of simple roots). 
\par
We can define for any $\alpha \in \Phi_{\oo}$ the corresponding coroot $\alpha^\vee \in \h$ such that for any $\lambda \in \h^*$
\[
\langle \lambda, \alpha^\vee \rangle = \frac{2(\lambda, \alpha)}{(\alpha, \alpha)}.
\]
The associated reflection $s_\alpha$ acts on $\h^*$ as expected: $s_\alpha(\lambda) = \lambda - \langle \lambda, \alpha^\vee \rangle \alpha$. The group generated by all even reflections is the Weyl group $ \mathcal{W} = \mathcal{W}_{\mathfrak{sp}(4)} \times \mathcal{W}_{\mathfrak{so}(3)} \cong (\mathbb{Z}_2^2 \rtimes S_2) \times \mathbb{Z}_2$. A simple description of its action on $\h^{*}$ is that $\mathcal{W}$ acts by signed permutations of the $\delta_j$'s or a sign swap of $\epsilon$ in the natural way. Also, notice that the Weyl group is not generated by the simple reflections.
 
The Weyl vector $\rho$ is defined as half the difference of the sum of the positive even roots and the sum of the positive odd roots. Explicitly, it is:
\begin{equation}\label{normWeyl}
\rho = \frac{1}{2}\delta_1 - \frac{1}{2}\delta_2 + \frac{1}{2}\epsilon.
\end{equation}
The dot-action $(\cdot)$ of $\mathcal{W}$ on $\h^*$ is given by $\rho$-shifting the regular action. Call a weight $\lambda$ dot-regular if $|\mathcal{W}\cdot\lambda| = |\mathcal{W}|$ and dot-singular otherwise. Notice that any vector in $X + \rho$ is half-integer and therefore not orthogonal to the non-isotropic odd roots $\delta_1$ and $\delta_2$.

Lastly, a weight $\lambda \in \h^*$ is said to be {antidominant} if $\langle \lambda + \rho, \alpha^\vee\rangle \not\in \mathbb{Z}_{> 0}$ and {dominant} if $\langle \lambda + \rho, \alpha^\vee\rangle \not\in \mathbb{Z}_{< 0}$ for all $\alpha \in \Phi^+_{\oo}$.
\subsection{Atypicality and linkage}\label{link}
The notion of linkage in the super case is similar to that of semisimple Lie algebras. However, the key distinction is that while blocks of modules in the semisimple Lie algebra case contain finitely many simple modules, odd roots allow for blocks in the super case to have infinitely many simple modules (see \cref{blocks} below). This arises because of a notion called atypicality.
 
Let $\h$ be the Cartan subalgebra of $\osp(3|4)$ and $\Phi$ be the root system as above. The {degree of atypicality} of $\lambda \in \h^*$, denoted $\#\lambda$, is the {maximum number of mutually orthogonal positive isotropic roots} $\alpha \in \Phi^+_{\one}$ such that $(\lambda + \rho, \alpha) = 0$. An element $\lambda \in \h^*$ is said to be {typical} (relative to $\Phi^+$) if $\#\lambda = 0$ and is atypical otherwise.
\par
We will mainly be interested in modules of integral highest weight $\lambda \in X$, frequently written as $\lambda = (a,b \ | \ c)-\rho$ where $a,b,c \in \mathbb{Z}+\frac{1}{2}$. In this case, atypicality can easily be read off and $\lambda$ is atypical of degree $1$ when $|c| = |a|$ or $|c| = |b|$ or both and typical otherwise. 
\par
A relation $\sim$ on $\h^*$ can be defined as following. We say $\lambda \sim \mu \ \lambda,\mu \in \h^*$ if there exist mutually orthogonal isotropic roots $\alpha_1,\alpha_2,\dots,\alpha_l$, complex numbers $c_1,c_2,\dots,c_l$, and an element $w \in \W$ satisfying:
\[
\mu + \rho = w\left(\lambda + \rho - \sum_{i=1}^{l}c_i\alpha_i\right), \ \ \ (\lambda + \rho, \alpha_i) = 0, i = 1\dots , l.
\]
The weights $\lambda$ and $\mu$ are said to be {linked} if $\lambda \sim \mu$. It can be shown that linkage is an equivalence relation, and weights of different degrees of atypicality are never linked. We note that we can restrict $\sim$ to $X$ (still denoted as $\sim$), where the $c_i$ can be taken to be integers. This is easily seen by noticing the following: $\mathcal{W}$ stabilizes $X$, $w\rho - \rho \in X$ for any $w \in \mathcal{W}$, the degree of atypicality is at most one, and that any non-integral multiple of an isotropic root is not in $X$.
\par
Here is an example. If $\lambda = (3/2, 1/2 | 1/2) - \rho$ and $\mu = (-3/2, 11/2 | 11/2) - \rho$, then both are atypical of degree $1$ and $\lambda \sim \mu$. On the other hand, if $\lambda = (1/2, 1/2 | 1/2 ) - \rho$ and $\mu = (7/2,7/2|7/2) - \rho$, they are still atypical of degree $1$ but are not linked. 
\subsection{Presentation of the Weyl Group}
Here, we take a closer look at the Weyl group $\mathcal{W}$, where the root system $\Phi$ and Cartan subalgebra $\h$ are as above. Denote by $r$ the reflection associated with $\delta_1 - \delta_2$, by $s$ the reflection associated with $2\delta_2$, and by $t$ the reflection associated with $\epsilon$. Then, the respective actions on $\h^*$ are given by permuting $\delta_1$ and $\delta_2$, negating $\delta_2$, and negating $\epsilon$. As a Coxeter group, the Weyl group has a presentation $\mathcal{W} = \langle r, s, t \ | \ r^2, s^2, t^2, (rs)^4, (rt)^2, (st)^2\rangle$. The first two reflections $r$ and $s$ generate $\mathcal{W}_{\mathfrak{sp}(4)}$, and $t$ generates $\mathcal{W}_{\mathfrak{so}(3)}$. We impose the Bruhat order on $\mathcal{W}$, writing $w' \leq w$ if a reduced word for $w'$ appears in some reduced word for $w$ for $w', w \in \mathcal{W}$. By the BGG theorem, this order is compatible with the partial order above on $\h^*$ in the sense that if $\lambda - \rho$ is typical, dot-regular, and antidominant, then $w' \leq w$ if and only if $w'\lambda \leq w\lambda$ (cf. \cite{humphreys2008representations}). $\mathcal{W}_{\mathfrak{sp}(4)}$ is dihedral and therefore the restricted Bruhat order is determined by comparing the lengths of elements. The Bruhat graph of $\mathcal{W}_{\mathfrak{sp}(4)}$ is given below: 

\begin{center}
\begin{tikzpicture}[thick,scale=0.8]
\begin{scope}[->,shorten >=9pt, shorten <= 9pt]

    \draw[->]{    (0,0) node {$1$}  -- (2,2) node {$r$}     };
    \draw[->]{    (0,0) node {}  -- (2,-2) node {$s$}     };
    \draw[->]{    (2,2) node {}  -- (5,2) node {$rs$}     };
    \draw[->]{    (2,-2) node {}  -- (5,-2) node {$sr$}     };
    \draw[->]{    (2,2) node {}  -- (5,-2) node {}     };
    \draw[->]{    (2,-2) node {}  -- (5,2) node {}     };    
    \draw[->]{    (5,2) node {}  -- (8,2) node {$rsr$}     };
    \draw[->]{    (5,-2) node {}  -- (8,-2) node {$srs$}     };
    \draw[->]{    (5,2) node {}  -- (8,-2) node {}     };
    \draw[->]{    (5,-2) node {}  -- (8,2) node {}     };    
    \draw[->]{    (8,2) node {}  -- (10,0) node {$rsrs = srsr$}     };
    \draw[->]{    (8,-2) node {}  -- (10,0) node {}     };
\end{scope}
\end{tikzpicture}    
\end{center}
Combined with the fact that $t$ is central, this makes clear the Bruhat order on $\mathcal{W}$. 

\subsection{The BGG category $\OO$}
From now on, let  $\g = \osp(3|4) = \g_{\oo} \oplus \g_{\one}$ with the standard associated bilinear form, root system, and triangular decomposition: $\g = \n^-\oplus\h\oplus\n^+$ and $\bo = \h\oplus\n^+$. Recall that the  (integral) {BGG category} $\OO$ is the full abelian subcategory of $U(\g)$-modules $M$ subject to the following three conditions:
\begin{enumerate}
\item $M$ is finitely generated.
\item $M$ is $\h$-semisimple: $M = \bigoplus_{\lambda \in X} M^\lambda$, where $M^\lambda = \{v \in M \ | \ h \cdot v = \lambda(h)v \ \ \mathrm{for \ all} \ h \in \h \}$.
\item $M$ is locally $\n^+$-finite: $U(\n^+)\cdot v$ is finite dimensional for all $v \in M$.
\end{enumerate}
Observe that the abelian quotient algebra $\bo /\n^+ \cong \h$. Thus, any $\lambda \in \h^*$ naturally defines a one-dimensional $\bo$-module with trivial $\n^+$-action, which we denote as $\mathbb{C}_\lambda$. Specifically, if $v \in \mathbb{C}_\lambda$, then $h\cdot v = \lambda(h) v$ for all $h \in \h$. Now, define
\[
M_\lambda \coloneqq U(\g)\otimes_{U(\bo)}\mathbb{C}_{\lambda - \rho},
\]
where $\rho$ is the Weyl vector. This is naturally a left $U(\g)$-module. This is called a {Verma module} of highest weight $\lambda - \rho$.
 
We let $L_\lambda$ denote the unique simple quotient of $M_\lambda$ of highest weight $\lambda - \rho$, and use the notation $[M_\mu : L_\lambda]$ to denote the multiplicity of $L_\lambda$ in a composition series of $M_\mu$. Such a series exists for all $M$ in $\OO$ as $\OO$ is both noetherian and artinian.
 
In the notation introduced in \cref{link}, if $\lambda = (a, b \ | \ c)$, write $M_{a, b| c}$ to denote $M_{\lambda}$ and $L_{a, b | c}$ to denote $L_{\lambda}$.

\subsection{Blocks in $\OO$}\label{blocks}
For $\lambda \in X + \rho$, let $[\lambda]$ denote the linkage class of $\lambda - \rho$ in $X/\sim$ and let $\OO_{[\lambda]}$ denote the Serre subcategory of $\OO$ generated by simple objects $L_\mu$ where $\mu \in X + \rho$ and $\mu - \rho  \sim \lambda - \rho$. Call $\OO_{[\lambda]}$ (a)typical if the linkage class $[\lambda]$ is (a)typical. 

\begin{prop}
The BGG category $\OO$ admits a decomposition

\[
 \OO = \bigoplus_{[\lambda] \in X/\sim} \OO_{[\lambda]}.
\]
This is precisely the decomposition of $\OO$ into indecomposable blocks.
\end{prop}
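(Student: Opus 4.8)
The plan is to prove the two assertions separately: first that $\OO$ is the direct sum of the Serre subcategories $\OO_{[\lambda]}$, and second that each $\OO_{[\lambda]}$ is indecomposable, so that this is genuinely the decomposition into blocks. The organizing principle throughout is the action of the center $Z = Z(U(\g))$, together with the fact (recalled in \cref{link}) that every object of $\OO$ has finite length.

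First I would establish the direct-sum decomposition via generalized central characters. The super Harish-Chandra homomorphism attaches to each $\lambda \in X$ a central character $\chi_\lambda \colon Z \to \C$ through which $Z$ acts on $M_\lambda$, and hence on its simple quotient $L_\lambda$. The essential input is the identity $\chi_\lambda = \chi_\mu \iff \lambda \sim \mu$ for $\lambda,\mu \in X$: the relation $\sim$ of \cref{link} is exactly the description of the fibers of this homomorphism for a basic classical Lie superalgebra, combining the $\W$-invariance with the isotropic shifts $\lambda + \rho \mapsto \lambda + \rho - \sum_i c_i\alpha_i$, and this computation is carried out in \cite{musson2012lie, cheng2012dualities}. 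Since $Z$ acts locally finitely on any $M \in \OO$ and $M$ has finite length, $M$ splits canonically as $M = \bigoplus_\chi M_\chi$, where $M_\chi$ is the submodule annihilated by a power of $\ker\chi$. Every composition factor of $M_\chi$ is some $L_\mu$ with $\chi_\mu = \chi$, i.e. with $\mu - \rho$ in a fixed linkage class, and conversely $\OO_{[\lambda]}$ is precisely the subcategory of modules all of whose composition factors have central character $\chi_\lambda$. This yields $\OO = \bigoplus_{[\lambda]} \OO_{[\lambda]}$, and in particular the linkage principle $[M_\mu : L_\lambda] \neq 0 \Rightarrow \mu \sim \lambda$.

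It then remains to show each $\OO_{[\lambda]}$ is indecomposable. Here I would invoke the general fact that the block decomposition of a finite-length abelian category is indexed by the connected components of its Ext-quiver, whose vertices are the simple objects and whose edges record the nonvanishing of $\Ext^1(L_\mu, L_\nu)$. One inclusion is immediate from the previous step: a nonsplit extension of $L_\mu$ by $L_\nu$ is indecomposable of length two, hence carries a single generalized central character, forcing $\chi_\mu = \chi_\nu$ and thus $\mu \sim \nu$; so every block is contained in a single linkage class, and $\OO_{[\lambda]}$ is at worst a direct sum of blocks. The substantive direction is to prove that the simples $\{L_\mu : \mu - \rho \sim \lambda - \rho\}$ form a single connected component of the Ext-quiver, i.e. that any two linked weights are joined by a chain along which consecutive $\Ext^1$'s are nonzero.

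The main obstacle is exactly this connectedness for the atypical linkage classes, where \cref{blocks} and the discussion in \cref{link} warn that there are infinitely many simple modules. For typical blocks one can transport the classical connectedness through the equivalence with an integral block of the even subalgebra (cf. \cite{gorelik10.2307/827094}). For the atypical case I would argue that $\sim$ is generated by elementary moves — a single even reflection in $\W$, or a single isotropic shift along some $\alpha \in \Phi^+_{\one}$ with $(\lambda+\rho,\alpha)=0$ — and then exhibit, for each elementary move relating $\mu$ and $\mu'$, a Verma module (or a subquotient of one) lying in $\OO_{[\lambda]}$ whose composition series contains both $L_\mu$ and $L_{\mu'}$, producing a nonsplit extension and hence an edge in the quiver. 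Verifying that every elementary isotropic move yields such an extension is the delicate point, since it depends on the submodule structure of atypical Verma modules; this is where one must lean on the structure theory for $\osp$ (or cite the established block classification) rather than on a purely formal argument.
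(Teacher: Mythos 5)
Your proposal follows the same two-step skeleton as the paper: the direct-sum decomposition via (generalized) central characters together with finite length, resting on the fact that linkage coincides with equality of central characters (Theorem 2.30 in \cite{cheng2012dualities}), and indecomposability via connecting linked simples by extensions arising from Verma module homomorphisms, with the even reflections handled exactly as in the paper by the maps $M_{s_\alpha\lambda}\to M_\lambda$.

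However, you stop short precisely at the step that carries all the content, and as stated your plan has a genuine gap there. You propose, for each elementary isotropic move along some $\alpha\in\Phi^+_{\one}$ with $(\lambda+\rho,\alpha)=0$, to exhibit a Verma module containing both $L_\mu$ and $L_{\mu'}$ as composition factors, and you defer this to unspecified ``structure theory for $\osp$.'' The difficulty is that for an arbitrary (non-simple) positive isotropic root there is no ready-made singular-vector lemma: the standard fact (Lemma 2.24 in \cite{cheng2012dualities}) produces a nonzero homomorphism $M_{\lambda-\gamma}\to M_\lambda$ only when $\gamma$ is a \emph{simple} isotropic root, and nothing in your outline normalizes the linkage class so that this hypothesis holds. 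The paper closes the gap with exactly such a normalization: every integral atypical class for $\osp(3|4)$ contains weights of the form $\lambda=(a,b|-b)\in X+\rho$, which are orthogonal to the simple isotropic root $\gamma=\delta_2-\epsilon$; since $\gamma$ is isotropic, orthogonality persists under every shift $\lambda\mapsto\lambda-c\gamma=(a,b-c|-b+c)$, so the cited lemma applies along the entire chain and links all the $L_{(a,c|-c)}$, $c\in\Z+\tfrac12$, after which the even-reflection extensions attach the full Weyl orbit $L_{w(a,c|-c)}$. Because the degree of atypicality here is exactly $1$, this chain plus the orbit exhausts the linkage class, so no appeal to the finer submodule structure of atypical Verma modules, nor to an external block classification, is needed. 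With that single normalization your plan becomes the paper's proof; without it, the ``delicate point'' you flag remains unproven.
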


\begin{proof}
It is clear for $\lambda, \mu \in X + \rho$ that $\Hom_\OO(L_\lambda, L_\mu) = 0$ if $\lambda \neq \mu$. Furthermore, $\lambda - \rho$ and $\mu - \rho$ are linked if and only if they have the same central character (cf. Theorem 2.30 in \cite{cheng2012dualities}). Therefore,  $\lambda - \rho \sim \mu - \rho$ is a necessary condition for $\Ext^1_\OO(L_\lambda, L_\mu) \neq 0$. Since a composition series exists for all $M \in \OO$, we deduce $\OO$ admits the decomposition above. 
\par
It remains to show that each $\OO_{[\lambda]}$ is indecomposable. First, suppose the equivalence class $[\lambda]$ is typical. Then, the proof is similar to the semisimple Lie algebra case (cf. Section 1.13 in \cite{humphreys2008representations}). There are nonzero homomorphisms from $M_{s_\alpha\lambda}$ to $M_\lambda$ if $s_\alpha\lambda \leq \lambda$ and $\alpha \in \Phi_{\ov{0}}^+$ due to the existence of certain maximal vectors, where the image of the map lies in the unique maximal proper submodule $N_{\lambda}$ of $M_{\lambda}$. Quotienting out by $N_{s_\alpha\lambda}$ and its image $Q$ in $N_\lambda$ yields an embedding $L_{s_\alpha\lambda}$ into $M_\lambda/Q$, which is a highest weight module with quotient $L_\lambda$. Indecomposability of highest weight modules shows that $L_{s_\alpha\lambda}$ and $L_\lambda$ lie in the same block. Since the even reflections generate the Weyl group, the claim follows by induction (potentially swapping the roles of $s_\alpha\lambda$ and $\lambda$ if need be).
\par
Let us turn to the atypical case. First suppose $\lambda = (a, b| -b) \in X + \rho$ for fixed $a \in \Z_{\geq 0} + 1/2$ and $b \in \Z + 1/2$ and let $\gamma = \delta_2 - \epsilon$ be the simple isotropic root. Then, $(\lambda, \gamma) = 0$, so $\OO_{[\lambda]}$ is atypical. Then, there exists a nonzero homomorphism from $M_{\lambda - \gamma}$ to $M_\lambda$ (cf. Lemma 2.24 in \cite{cheng2012dualities}). By an argument like the one above, $L_{(a, c |-c)}$ and $L_{(a, b | -b)}$ lie in the same block for all $c \in \Z + 1/2$, and the argument directly shows $L_{w(a,c|-c)}$ lies in the same block as $L_{(a,c|-c)}$ for all $w \in \mathcal{W}$. This exhausts all weights in the linkage class associated to $\lambda - \rho$ and therefore shows the claim.
\end{proof}

Therefore, blocks in $\OO$ are indexed by linkage classes. In particular, each $a \in \mathbb{Z}_{\geq 0} + 1/2$ specifies a different atypical block $\mathcal{B}_{a} = \mathcal{O}_{[(a, b|b)]}$, with $b \in \mathbb{Z} + 1/2$. All integral atypical blocks are given this way. In particular, the principal block $\mathcal{B}_{1/2}$ contains the trivial module. 
\par
The typical blocks in $\OO$ are described by Gorelik (see section 8.5.1 in \cite{gorelik2002annihilation} and theorem 1.3.1 in \cite{gorelik10.2307/827094}). Because any $\rho$-shifted integral weight is strongly typical in the sense of \cite{gorelik10.2307/827094}, we get
\begin{prop}[Gorelik]\label{equiv}
Any typical block in $\OO$ is equivalent to a block in the BGG category $\OO$ of $\mathfrak{g}_{\ov{0}}$-modules of integral weights.
\end{prop}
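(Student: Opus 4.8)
The plan is to deduce the statement from Gorelik's equivalence theorem for strongly typical central characters (Theorem~1.3.1 of \cite{gorelik10.2307/827094}), which asserts that the category of $\g$-modules admitting a fixed strongly typical central character is equivalent to a corresponding category of $\g_{\oo}$-modules. Thus the only thing we must genuinely supply is the verification that every \emph{typical integral} block is in fact \emph{strongly typical}; once that is established the cited theorem applies verbatim, and it remains only to observe that the equivalence respects the defining conditions of $\OO$.

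Recall that, in Gorelik's terminology, a weight $\lambda + \rho$ (equivalently, its central character) is \emph{typical} when $(\lambda + \rho, \alpha) \neq 0$ for every isotropic odd root $\alpha$, and is \emph{strongly typical} when $(\lambda + \rho, \alpha) \neq 0$ for \emph{every} odd root $\alpha$, isotropic or not. For $\g = \osp(3|4)$ the positive odd roots are $\delta_1, \delta_2, \delta_1 \pm \epsilon, \delta_2 \pm \epsilon$; among these the isotropic ones are exactly $\delta_i \pm \epsilon$, whereas $\delta_1$ and $\delta_2$ are anisotropic (indeed $(\delta_i, \delta_i) = 1$). Consequently the gap between typicality and strong typicality is precisely the pair of conditions $(\lambda + \rho, \delta_1) \neq 0$ and $(\lambda + \rho, \delta_2) \neq 0$, and since all weights in a linkage class share a central character, typicality and strong typicality are well defined at the level of the block.

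First I would settle this gap by direct computation. Writing $\lambda + \rho = (a, b \ | \ c)$ with $a, b, c \in \Z + \tfrac{1}{2}$ — the form taken by any $\rho$-shifted integral weight — the bilinear form yields $(\lambda + \rho, \delta_1) = a$ and $(\lambda + \rho, \delta_2) = b$. As $a$ and $b$ are half-integers they are nonzero, so $\lambda + \rho$ is never orthogonal to $\delta_1$ or $\delta_2$. Hence as soon as the block is typical, so that orthogonality to the isotropic roots $\delta_i \pm \epsilon$ is already ruled out, it is automatically strongly typical. This is exactly the half-integrality observation recorded just before the statement: every vector of $X + \rho$ is half-integral in its $\delta$-coordinates.

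With strong typicality in hand, Gorelik's theorem produces an equivalence between the subcategory of $\g$-modules with the given central character and the analogous subcategory of $\g_{\oo}$-modules. The remaining point — and the one meriting the most care — is to check that this equivalence carries the category $\OO$ for $\g$ onto the category $\OO$ for $\g_{\oo}$: one verifies that the functors realizing the equivalence preserve finite generation, $\h$-semisimplicity, and local $\n^+$-finiteness, and that they intertwine the linkage decompositions so that a single typical $\g$-block maps to a single $\g_{\oo}$-block of integral weights. If one were to prove the equivalence from scratch rather than cite it, this construction (an idempotent/projection comparing the two central characters, in the spirit of \S8.5.1 of \cite{gorelik2002annihilation}) would be by far the main obstacle; granting Gorelik's result, the entire argument reduces to the elementary half-integrality computation above.
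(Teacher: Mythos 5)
Your proposal is correct and matches the paper's own justification: the paper likewise derives the statement from Gorelik's Theorem~1.3.1 together with the observation (recorded just before the proposition) that any vector in $X+\rho$ has half-integral $\delta$-coordinates and hence is non-orthogonal to the anisotropic odd roots $\delta_1,\delta_2$, so typical integral blocks are automatically strongly typical. Your explicit computation $(\lambda+\rho,\delta_i)\neq 0$ and your remark about the equivalence respecting the defining conditions of $\OO$ simply spell out what the paper leaves implicit.
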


\subsection{Key results in $\OO$}
The primary means by which the goals of this paper are achieved are by using translation functors. We restate the necessary results to justify our steps. This collection of results is justified in \cite[Chap. 1-3]{humphreys2008representations} for the BGG category $\OO$ for semisimple Lie algebras; similar arguments extend them to the BGG category $\OO$ of $\osp(3|4)$-modules.

We say a module $N \in \OO$ has a {standard filtration} or a {Verma flag} if there is a sequence of submodules $0 = N_0 \subset N_1 \subset N_2 \subset \cdots N_k = N$ such that each $N_i/N_{i-1} \ 1\leq i \leq k$ is isomorphic to a Verma module.  The number of times the Verma module $M_\lambda$ appears in a standard filtration of $N$ is denoted by $(N : M_\lambda)$.
 
It can be shown that the length and the Verma multiplicities in a standard filtration are independent of choice of a standard filtration. Therefore, the following informal notation to indicate a standard filtration of a module is useful. If $c_i = (N : M_{\lambda_i})$, we write
\[
N = c_1M_{\lambda_1} + c_2M_{\lambda_2} + \dots + c_kM_{\lambda_k}.
\]
 
Similarly, if $d_i = [N : L_{\mu_i}]$, we write
\[
N = d_1L_{\mu_1} + d_2L_{\mu_2} + \dots + d_kL_{\mu_k}.
\]

\begin{thm}\label{tran}
Let $N$ be a finite dimensional $U(\g)$-module. For any $\lambda \in \h^*$, the tensor module $T \coloneqq M_\lambda \otimes N$ has a standard filtration containing Verma modules of the form $M_{\lambda+\mu}$, where $\mu$ ranges over the weights of $N$, each occurring $\mathrm{dim} \ N^\mu$ times in the filtration.
\end{thm}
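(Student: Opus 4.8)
The plan is to construct the standard filtration of $T = M_\lambda \otimes N$ explicitly from the PBW-type structure of the Verma module and a filtration of the finite-dimensional module $N$ by weight. First I would recall that as a $U(\g)$-module, $M_\lambda = U(\g) \otimes_{U(\bo)} \C_{\lambda-\rho} \cong U(\n^-) \otimes \C_{\lambda-\rho}$ as a vector space (and as a $U(\n^-)$-module, even a free one), so that $M_\lambda$ is free over $U(\n^-)$ of rank one. The key structural input is the tensor identity: for a $U(\g)$-module $N$ that is finite-dimensional, there is an isomorphism of $U(\g)$-modules
\[
U(\g) \otimes_{U(\bo)} (\C_{\lambda-\rho} \otimes N) \;\cong\; \big(U(\g)\otimes_{U(\bo)} \C_{\lambda-\rho}\big) \otimes N \;=\; M_\lambda \otimes N,
\]
where on the left $N$ is regarded as a $\bo$-module by restriction. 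This identity is the super-analogue of the standard fact used in \cite[Sec.~3.6]{humphreys2008representations}, and it holds verbatim in the super setting because the relevant maps are $U(\g)$-linear and respect the $\Z_2$-grading.

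Next I would filter the $\bo$-module $N$. Since $N$ is finite-dimensional and $\h$-semisimple, choose a basis of weight vectors and order the weights $\mu_1, \dots, \mu_r$ of $N$ so that $\mu_i \not< \mu_j$ whenever $i < j$ (i.e.\ a refinement compatible with the partial order on $\h^*$, placing higher weights first). Because $\n^+$ raises weights, this ordering produces a filtration $0 = N_0 \subset N_1 \subset \cdots \subset N_s = N$ of $N$ by $\bo$-submodules whose successive quotients $N_i/N_{i-1}$ are one-dimensional $\bo$-modules isomorphic to $\C_{\mu}$ for the various weights $\mu$ of $N$, each appearing exactly $\dim N^\mu$ times. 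Tensoring over $U(\bo)$ with $\C_{\lambda-\rho}$ and then inducing up via the exact functor $U(\g)\otimes_{U(\bo)} -$ (exactness follows because $U(\g)$ is free as a right $U(\bo)$-module by the super PBW theorem) yields a filtration of $M_\lambda \otimes N$ whose successive subquotients are
\[
U(\g)\otimes_{U(\bo)} \big(\C_{\lambda-\rho}\otimes \C_{\mu}\big) \;=\; U(\g)\otimes_{U(\bo)} \C_{(\lambda+\mu)-\rho} \;=\; M_{\lambda+\mu}.
\]
This gives precisely the asserted standard filtration, with $M_{\lambda+\mu}$ occurring $\dim N^\mu$ times.

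The main obstacle, and the step requiring genuine care in the super setting, is verifying that the chosen ordering of the weights of $N$ actually yields a chain of honest $\bo$-\emph{submodules} rather than merely a graded-vector-space filtration: one must check that $\n^+ \cdot N_i \subseteq N_{i-1}$, so that each $N_i$ is $\bo$-stable and each quotient carries the trivial $\n^+$-action needed to identify it with some $\C_\mu$. This is where the compatibility of the weight ordering with the root order is essential, and where the $\Z_2$-grading must be tracked so that odd root vectors in $\n^+$ are handled correctly; the parity signs from the Koszul rule do not affect weights, so the argument goes through, but this bookkeeping is the crux. The tensor identity and the exactness of induction are then routine given the super PBW theorem, so I would state them and move quickly past the verifications.
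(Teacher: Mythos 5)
Your proposal is correct and follows essentially the same route as the paper, which states Theorem~\ref{tran} without proof and defers to Humphreys (Theorem~3.6 there): the tensor identity $U(\g)\otimes_{U(\bo)}(\C_{\lambda-\rho}\otimes N)\cong M_\lambda\otimes N$, a weight-ordered filtration of $\C_{\lambda-\rho}\otimes N$ by $\bo$-submodules with one-dimensional quotients, and exactness of induction via the (super) PBW theorem is precisely that standard argument carried over to the super setting, with your remark on parity covering the only genuinely new point. One cosmetic slip: the tensoring with $\C_{\lambda-\rho}$ is over $\C$ (as $\bo$-modules via the comultiplication), not over $U(\bo)$.
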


We let $P_\lambda$ denote the (unique) projective cover for $L_\lambda$ for all $\lambda \in \h^*$, that is the indecomposable projective such that $P_\lambda \twoheadrightarrow L_\lambda \rightarrow 0$. We recall the following facts about projectives.
\begin{enumerate}

\item All projectives have a standard filtration. \label{p0}
\item The category $\OO$ has enough projectives. \label{p1}
\item If $P = Q \oplus R$ with $P, Q, R \in \OO$, $P$ is projective if and only if $Q$ and $R$ are projective.
\item If $P \in \OO$ is projective and indecomposable, then $P \cong P_\lambda$ for some $\lambda \in \h^*$.
\item The Verma modules $M_\mu$ which appear in a standard filtration of $P_\lambda$ satisfy $\mu \geq \lambda$ in the Bruhat ordering, and $M_\lambda$ appears with multiplicity $1$. \label{p4}
\end{enumerate}
These facts yield the following lemma.
\begin{lem}\label{lowest}
If $\lambda - \rho$ is the lowest weight in a standard filtration of a projective object $P$, then $P_\lambda$ is a direct summand of $P$.
\end{lem}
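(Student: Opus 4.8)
The plan is to decompose $P$ into indecomposable projectives and then use the last of the listed facts about projectives (that the Verma factors $M_\mu$ of $P_\lambda$ satisfy $\mu \geq \lambda$, with $M_\lambda$ occurring exactly once) to identify one of the summands as $P_\lambda$.

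First I would set up a Krull--Schmidt decomposition. Since $P$ is projective it carries a finite standard filtration by fact (1), and since each Verma module has finite length in $\OO$, the module $P$ has finite length; finite-length objects in an abelian category admit a decomposition into indecomposables, so I can write $P \cong \bigoplus_{i=1}^{n} P^{(i)}$ with each $P^{(i)}$ indecomposable. By fact (3) every direct summand of a projective is projective, so each $P^{(i)}$ is an indecomposable projective, and by fact (4) we have $P^{(i)} \cong P_{\mu_i}$ for suitable $\mu_i \in \h^*$.

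Next I would track Verma multiplicities through this decomposition. Because standard-filtration multiplicities are independent of the chosen filtration, they are additive over direct sums, so $(P : M_\nu) = \sum_{i=1}^{n} (P_{\mu_i} : M_\nu)$ for every $\nu$. By hypothesis $M_\lambda$ occurs in a standard filtration of $P$, i.e.\ $(P : M_\lambda) \geq 1$, hence $(P_{\mu_{i_0}} : M_\lambda) \geq 1$ for some index $i_0$. Applying fact (5) to $P_{\mu_{i_0}}$, every Verma factor $M_\nu$ of $P_{\mu_{i_0}}$ satisfies $\nu \geq \mu_{i_0}$; in particular $\lambda \geq \mu_{i_0}$. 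On the other hand, the lowest Verma factor of $P_{\mu_{i_0}}$ is $M_{\mu_{i_0}}$ itself (occurring with multiplicity one by fact (5)), and since $P_{\mu_{i_0}}$ is a summand of $P$, the module $M_{\mu_{i_0}}$ likewise occurs in a standard filtration of $P$. As $\lambda - \rho$ is the lowest weight occurring in any such filtration, $\mu_{i_0}$ cannot lie strictly below $\lambda$; combined with $\mu_{i_0} \leq \lambda$ this forces $\mu_{i_0} = \lambda$, so $P_\lambda \cong P^{(i_0)}$ is a direct summand of $P$.

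The genuinely substantive input here is fact (5), the Bruhat-order bound together with the multiplicity-one statement, which I am entitled to assume. Granting it, the whole argument is bookkeeping with the partial order, and the only step I would take care to justify explicitly is the passage to a Krull--Schmidt decomposition with additive Verma multiplicities: both rest on $P$ having finite length, i.e.\ on Verma modules having finite length in $\OO$. I expect that finite-length / indecomposable-decomposition step to be the main (mild) obstacle, since everything downstream follows formally from comparing $\mu_{i_0}$ and $\lambda$ in the ordering.
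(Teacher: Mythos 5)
Your argument is correct and is exactly the derivation the paper intends: the paper gives no separate proof, stating only that facts (1)--(5) ``yield'' the lemma, and your write-up is precisely that bookkeeping --- decompose $P$ into indecomposable projectives $P_{\mu_i}$ via facts (1), (3), (4), use additivity of Verma-flag multiplicities to find $i_0$ with $(P_{\mu_{i_0}} : M_\lambda) \geq 1$, and squeeze $\mu_{i_0} = \lambda$ from fact (5) together with the minimality of $\lambda - \rho$. Your care about the Krull--Schmidt step is warranted but already covered by the paper's remark that $\OO$ is noetherian and artinian, so every object has finite length.
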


The following proposition, which follows from Theorem \ref{tran}, is a critical part of our translation functor arguments.
\begin{prop}\label{sum}
If a projective $P$ has a standard filtration given by $P_\lambda = \sum_{\nu} M_{\nu}$, the $\nu$ not necessarily distinct, then for any finite-dimensional representation $N$ with weights $\mu$, the standard filtration for $P\otimes N$ is given by $\sum_{\nu}\sum_{\mu} M_{\nu + \mu}$, where $\mu$ appears in the sum with multiplicity given by $\mathrm{dim}\ N^\mu$.
\end{prop}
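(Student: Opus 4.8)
The plan is to build the desired filtration of $P\otimes N$ by tensoring a standard filtration of $P$ with $N$ and then refining each resulting layer using Theorem~\ref{tran}. First I would fix a standard filtration
\[
0 = P_0 \subset P_1 \subset \cdots \subset P_k = P, \qquad P_i/P_{i-1} \cong M_{\nu_i},
\]
so that the multiset $\{\nu_1,\dots,\nu_k\}$ records exactly the $\nu$ appearing in the given flag $P = \sum_\nu M_\nu$.

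The key observation is that $-\otimes_\C N$ is an exact endofunctor of $\OO$: as a complex vector space $N$ is free, so tensoring preserves exactness, while the $\g$-action defined through the coproduct makes each $P_i\otimes N$ a submodule of $P\otimes N$ and identifies $(P_i\otimes N)/(P_{i-1}\otimes N)$ with $(P_i/P_{i-1})\otimes N \cong M_{\nu_i}\otimes N$ in $\OO$. Applying $-\otimes N$ to the filtration above therefore yields a filtration of $P\otimes N$ whose $i$-th layer is $M_{\nu_i}\otimes N$; in particular this already shows $P\otimes N \in \OO$.

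Next I would invoke Theorem~\ref{tran}: each layer $M_{\nu_i}\otimes N$ admits a standard filtration whose Verma subquotients are precisely $M_{\nu_i+\mu}$, with $\mu$ ranging over the weights of $N$ and each occurring $\dim N^\mu$ times. Pulling each such standard filtration back through the quotient map $P_i\otimes N \twoheadrightarrow (P_i\otimes N)/(P_{i-1}\otimes N)$ refines the piece of the filtration between $P_{i-1}\otimes N$ and $P_i\otimes N$ into Verma layers. Concatenating over $i = 1,\dots,k$ produces a genuine standard filtration of $P\otimes N$ whose Verma factors, counted with multiplicity, are exactly the $M_{\nu_i+\mu}$; collecting over all $i$ and all weights $\mu$ gives the asserted flag $\sum_\nu\sum_\mu M_{\nu+\mu}$ with $\mu$ weighted by $\dim N^\mu$.

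The steps are individually routine; the only points requiring care are the exactness of $-\otimes N$ together with the identification of subquotients (so that tensoring the flag of $P$ really does produce the layers $M_{\nu_i}\otimes N$), and the standard but slightly fussy refinement lemma that splices the Verma filtrations of the individual layers into a single filtration of the total module. Since the well-definedness of standard-filtration multiplicities is already available, no further bookkeeping is needed beyond tallying the contributions $M_{\nu+\mu}$, and I expect this refinement-plus-exactness bookkeeping to be the main---though still mild---obstacle.
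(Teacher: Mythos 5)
Your proposal is correct and is precisely the argument the paper leaves implicit: the paper states Proposition~\ref{sum} with only the remark that it ``follows from Theorem~\ref{tran},'' and your route---exactness of $-\otimes N$ applied to a Verma flag of $P$, followed by refining each layer $M_{\nu_i}\otimes N$ via Theorem~\ref{tran} and splicing---is the standard way to fill in that derivation. No gaps; the two points you flag (exactness with identification of subquotients, and the refinement/splicing step) are indeed the only details requiring care, and you handle both.
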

Knowing the Verma flag structure of typical projectives will be key in determining those of atypical projectives. We have the following lemmas.
\begin{lem}\label{typAndreg}
If $\lambda \in X + \rho$ is such that $\lambda - \rho$ is typical and dot-regular, then the Verma modules that appear in a standard filtration of $P_\lambda$ are of the form $M_{w\lambda}$, where $w \in \W$ such that $w\lambda \geq \lambda$, and each Verma module appears with multiplicity $1$.
\end{lem}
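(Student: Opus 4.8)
\emph{Proof proposal.} The plan is to transport the statement to the even part $\g_{\oo}=\mathfrak{sp}(4)\oplus\mathfrak{so}(3)$ via Gorelik's equivalence and then read off the Verma-flag multiplicities from BGG reciprocity together with the (trivial) Kazhdan--Lusztig combinatorics of the Weyl group $\W$.

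First I would invoke Proposition~\ref{equiv}: since $\lambda-\rho$ is typical, the block $\OO_{[\lambda]}$ is equivalent to an integral block of the BGG category $\OO$ for $\g_{\oo}$, and because $\lambda-\rho$ is dot-regular the corresponding $\g_{\oo}$-block is a regular integral block. An equivalence of highest-weight categories matches Verma modules to Verma modules, projective covers to projective covers, and simples to simples, and therefore preserves all standard-filtration and composition multiplicities; so it suffices to establish the claim in the regular integral block of $\g_{\oo}$. There the Verma modules are exactly $\{M_{w\lambda}:w\in\W\}$ (the dot-orbit realized linearly on $X+\rho$), and they are pairwise distinct by dot-regularity.

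Next I would pin down which Vermas occur. By the facts about projectives listed above, specifically item~\ref{p4}, every $M_\mu$ occurring in a standard filtration of $P_\lambda$ satisfies $\mu\geq\lambda$ in the Bruhat order and $M_\lambda$ occurs exactly once; since $M_\mu$ lies in $\OO_{[\lambda]}$ we must have $\mu=w\lambda$ for some $w\in\W$, which already gives the asserted form $M_{w\lambda}$ with $w\lambda\geq\lambda$. To compute the actual multiplicities I would apply BGG reciprocity, $(P_\lambda:M_{w\lambda})=[M_{w\lambda}:L_\lambda]$, reducing everything to Verma composition multiplicities.

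The crux---and the step I expect to be the main obstacle---is showing each such multiplicity is exactly $1$ rather than larger. Here I would use the Kazhdan--Lusztig theorem for semisimple Lie algebras (\cite{beilinsonbernstein81, brylinski1981kazhdan}), which expresses $[M_{w\lambda}:L_\lambda]$ as a Kazhdan--Lusztig polynomial evaluated at $1$, together with the fact that $\W=\W_{\mathfrak{sp}(4)}\times\W_{\mathfrak{so}(3)}$ is a product of the rank-two dihedral group of order $8$ and $\Z_2$. All Kazhdan--Lusztig polynomials of a finite rank-two Coxeter group are trivial, and those of a direct product factor as products of the factors' polynomials, so every relevant polynomial equals $1$; hence $[M_{w\lambda}:L_\lambda]\in\{0,1\}$ and thus $(P_\lambda:M_{w\lambda})\in\{0,1\}$. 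Combined with item~\ref{p4} this shows that the Verma modules appearing in the standard filtration of $P_\lambda$ are of the form $M_{w\lambda}$ with $w\lambda\geq\lambda$, each with multiplicity $1$. The only genuine content beyond bookkeeping is the triviality of the rank-two Kazhdan--Lusztig combinatorics; alternatively one could bypass Kazhdan--Lusztig entirely and verify directly that all Verma composition multiplicities for $\mathfrak{sl}_2\cong\mathfrak{so}(3)$ and for $\mathfrak{sp}(4)$ are at most $1$, which is classical for these small algebras.
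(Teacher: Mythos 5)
Your proposal is correct and takes essentially the same route as the paper: reduce to a regular integral block of $\mathfrak{g}_{\bar 0}=\mathfrak{sp}(4)\oplus\mathfrak{so}(3)$ via Gorelik's equivalence (Proposition~\ref{equiv}) and conclude from the Kazhdan--Lusztig theorem, using that the Kazhdan--Lusztig polynomials for the product of a dihedral group with $\mathbb{Z}_2$ are trivial. The only difference is expository: you spell out the BGG-reciprocity step and the standard facts about Verma flags of $P_\lambda$ that the paper leaves implicit in ``the result then follows by the Kazhdan--Lusztig conjecture.''
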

\begin{proof}
By Proposition \ref{equiv}, we have an equivalence of categories to the Lie algebra $\g_{\ov{0}} =  \mathfrak{sp}(4) \oplus \mathfrak{so}(3)$. Since the Weyl group $\mathcal{W}$ is the product of dihedral groups, it is well known in this case that the Kazhdan-Lusztig polynomials are all monomials (and for our particular case it can be directly verified by computation). The result then follows by the Kazhdan-Lusztig conjecture.
\end{proof}
The lemma also extends to typical and dot-singular weights.
\begin{lem}\label{typAndsing}
Let $\lambda \in X + \rho$ be such that $\lambda - \rho$ is a typical anti-dominant dot-singular weight. Let $\mathcal{W}^{\lambda}$ be a minimal set of left-coset representatives of $\mathcal{W}/\mathcal{W}_{\lambda}$, where $\mathcal{W}_{\lambda} = \{w \in \mathcal{W} \ | \  w\lambda = \lambda\}$. Then, if $\sigma \in \mathcal{W}^{\lambda}$,

\begin{equation}\label{singform}
P_{\sigma\lambda} = \sum_{\tau \geq \sigma, \tau \in \mathcal{W}^{\lambda}} M_{\tau\lambda}.
\end{equation}
\end{lem}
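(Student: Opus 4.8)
The plan is to reduce to the underlying reductive Lie algebra and then compute the Verma flag multiplicities through BGG reciprocity, extracting the needed composition multiplicities by translation to the wall. First I would invoke Gorelik's equivalence (\cref{equiv}): since $\lambda - \rho$ is typical and integral, the block $\OO_{[\lambda]}$ is equivalent to an integral block of the BGG category $\OO$ for $\g_{\ov 0} = \mathfrak{sp}(4)\oplus\mathfrak{so}(3)$, and this equivalence matches Verma modules, simples, and indecomposable projectives, together with the $\mathcal{W}$-action and Bruhat combinatorics (here $\mathcal{W}$ is the even Weyl group). Under it $\lambda$ corresponds to an antidominant singular integral weight with stabilizer $\mathcal{W}_\lambda$, so it suffices to prove the formula for the semisimple Lie algebra $\g_{\ov 0}$. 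There I would apply BGG reciprocity, $(P_{\sigma\lambda} : M_{\tau\lambda}) = [M_{\tau\lambda} : L_{\sigma\lambda}]$, which converts the claim into the assertion that, for $\sigma,\tau \in \mathcal{W}^\lambda$, one has $[M_{\tau\lambda} : L_{\sigma\lambda}] = 1$ when $\tau \geq \sigma$ and $0$ otherwise.

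To compute these singular composition multiplicities I would translate to the wall from the regular block, whose multiplicities are already controlled by \cref{typAndreg}. Fix a regular antidominant weight $\lambda^\circ$ lying in a facet whose closure contains $\lambda$ and having the same integral root system, and let $T$ denote the translation functor onto the wall (with adjoint translating off the wall). I would use the standard properties of translation onto a wall: $T$ is exact; $T M_{w\lambda^\circ} \cong M_{w\lambda}$, which depends only on the coset $w\mathcal{W}_\lambda$ and hence is indexed by its minimal representative in $\mathcal{W}^\lambda$; and $T L_{w\lambda^\circ} \cong L_{w\lambda}$ when $w \in \mathcal{W}^\lambda$ is the minimal element of its coset, while $T L_{w\lambda^\circ} = 0$ otherwise. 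Applying $T$ to a composition series of the regular Verma module $M_{\tau\lambda^\circ}$ (for $\tau \in \mathcal{W}^\lambda$) and comparing classes in the Grothendieck group, the vanishing of $T$ on the non-minimal simples collapses the sum onto the minimal representatives and yields $[M_{\tau\lambda} : L_{\sigma\lambda}] = [M_{\tau\lambda^\circ} : L_{\sigma\lambda^\circ}]$ for all $\sigma \in \mathcal{W}^\lambda$.

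Finally, since $\mathcal{W} = (\Z_2^2 \rtimes S_2)\times\Z_2$ is a product of dihedral groups, all of its Kazhdan--Lusztig polynomials are trivial; equivalently, the composition-factor shadow of \cref{typAndreg} (apply BGG reciprocity in the regular block) gives $[M_{\tau\lambda^\circ} : L_{\sigma\lambda^\circ}] = 1$ exactly when $\sigma \leq \tau$ and $0$ otherwise. Combined with the previous paragraph this gives $[M_{\tau\lambda} : L_{\sigma\lambda}] = 1$ iff $\sigma \leq \tau$ for $\sigma, \tau \in \mathcal{W}^\lambda$, and substituting into BGG reciprocity produces $(P_{\sigma\lambda} : M_{\tau\lambda}) = 1$ precisely when $\tau \geq \sigma$, which is \eqref{singform}. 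I expect the main obstacle to be bookkeeping the minimal-coset-representative combinatorics through $T$: one must verify that $T$ annihilates exactly the simples indexed by non-minimal elements and that the Bruhat order restricted to $\mathcal{W}^\lambda$ agrees with the quotient order on $\mathcal{W}/\mathcal{W}_\lambda$ used in the statement. Because the ranks here are tiny, a reassuring cross-check is to compute the finitely many quotient posets $\mathcal{W}^\lambda$ and the corresponding multiplicities directly, which I would record as a verification rather than as the conceptual argument.
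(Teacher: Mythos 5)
Your proposal is correct, but it diverges from the paper at the key step. The paper's proof, after the same reduction via Gorelik's equivalence (\cref{equiv}), observes that since $c \neq 0$ the $\mathcal{W}_{\mathfrak{so}(3)}$-factor acts regularly, so $\mathcal{W}_{\lambda} \subseteq \mathcal{W}_{\mathfrak{sp}(4)}$ and the question collapses to a singular integral block for $\mathfrak{sp}(4)$ alone; it then simply cites Theorem 3.11.4 of Beilinson--Ginzburg--Soergel, which expresses singular-block multiplicities through (parabolic) Kazhdan--Lusztig combinatorics, monomial here because $\mathcal{W}_{\mathfrak{sp}(4)}$ is dihedral -- this parallels Lemma 3.5 of Cheng--Wang for $G(3)$. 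You instead re-derive the singular multiplicities from scratch: convert \eqref{singform} by BGG reciprocity into the claim $[M_{\tau\lambda}:L_{\sigma\lambda}] = \delta_{\sigma \leq \tau}$ for $\sigma,\tau \in \mathcal{W}^{\lambda}$, and obtain it from the regular case (the BGG-reciprocity shadow of \cref{typAndreg}) via Jantzen translation onto the wall and a Grothendieck-group computation. This is a genuinely different, more self-contained route: it needs only the classical translation theory (Humphreys, Ch.~7) that the paper already invokes in spirit, at the cost of the coset bookkeeping you flag, whereas the paper's citation is shorter but imports Koszul-duality-flavored machinery. Two points you rightly identified as the delicate spots do check out: with the paper's antidominant normalization it is indeed the \emph{minimal} coset representatives whose simples survive translation to the wall (in the dominant normalization of Jantzen/Soergel it is the longest -- an easy $\mathfrak{sl}_2$ computation confirms the antidominant version), and the order appearing in \eqref{singform} is the full Bruhat order restricted to $\mathcal{W}^{\lambda}$, which agrees with the quotient order on $\mathcal{W}/\mathcal{W}_{\lambda}$ by a standard fact of Deodhar, so no discrepancy arises when you substitute back into BGG reciprocity.
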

\begin{proof}
The proof is analogous to that of Lemma 3.5 in \cite{chengwangg3}. Since $\lambda = a\delta_1 + b\delta_2 + c\epsilon$ with $a, b, c \in \Z + \frac{1}{2}$ is singular and in particular $c \neq 0$, changing the sign of $c$ does not stabilize $\lambda$. Hence, the action of $\mathcal{W}_{\mathfrak{so}(3)}$ is always regular. Therefore, $\{e\} \neq \mathcal{W}_{\lambda} \subseteq \mathcal{W}_{\mathfrak{sp}(4)}$. The central character corresponding to the integral weight $\lambda - \rho$ is strongly typical in the sense of Gorelik (cf. \cite{gorelik2002annihilation}) and by Proposition \ref{equiv} we have an equivalence of categories between the block containing the irreducible module $L_{\lambda}$ and a singular integral block of $\mathfrak{sp}(4)\oplus \mathfrak{so}(3)$-modules.
 
Since the action of  $\mathcal{W}_{\mathfrak{so}(3)}$ is regular, it suffices to check the analog of \eqref{singform} for a singular integral block of $\mathcal{W}_{\mathfrak{sp}(4)}$ modules. Since the corresponding Weyl group is dihedral and the Kazhdan-Lusztig polynomials are monomials, the lemma follows by Theorem 3.11.4 in \cite{beilinson1996koszul}.
\end{proof}

Lastly, we recall BGG reciprocity.
\begin{equation}\label{BGGrecip}
(P_\lambda : M _\mu) = [M_\mu : L_\lambda], \ \ \lambda, \mu \in \h^* .
\end{equation}
\subsection{Some representations of $\osp(3|4)$}
The strategy of using translation functors involves choosing appropriate representations to tensor with projective modules to produce new modules.
 
The simplest module we use is the seven-dimensional {natural representation} $V = \mathbb{C}^{3|4}$ of $\osp(3|4)$. We also use the second symmetric power $S^2 V$ of the natural representation (call it the symmetric-squared of the natural) and the adjoint representation $\mathfrak{g}$. In general, the $k$-th symmetric power of a vector superspace $W = W_{\oo}\oplus W_{\one}$ is defined as:
\[
S^k(W) \coloneqq \bigoplus_{i+j=k} \left(S^i (W_{\oo}) \otimes \Lambda^i (W_\one)\right),
\]
where $S^i$ and $\Lambda^j$ acting on vector spaces are the $i$-th symmetric power and $j$-th exterior power in the traditional sense, respectively. The natural representation has dimension $7$, the symmetric-squared of the natural has dimension $24$, and the adjoint representation has dimension $25$.

\subsection{Conditions for nonzero Verma flag multiplicities in projective modules}\label{conditions}
We have the following proposition, which uses BGG reciprocity to reformulate the conditions for tilting modules in \cite[Proposition 2.2]{chengwangg3} as conditions for projective modules.
\begin{prop}\label{filprop}

Suppose that $\lambda \in X + \rho, \alpha_i \in \Phi_{\bar{0}}^+, 1 \leq i \leq k,$ and $\beta, \gamma \in \Phi_{\bar{1}}^+$. Let $w = s_{\alpha_k}s_{\alpha_{k-1}}\cdots s_{\alpha_1} \in \W$.
	\begin{enumerate}
	    \item Suppose that $\langle\lambda, \alpha_1^\vee\rangle < 0$. Then $(P_{\lambda} : M_{s_{\alpha_1}\lambda}) > 0$. \label{1}

	    \item Suppose that $\langle s_{\alpha_{i-1}}\cdots s_{\alpha_1}\lambda, \alpha_i^\vee\rangle < 0$ for all  $i \in {1,2,\dots,k}$. then $(P_{\lambda} : M_{w\lambda}) > 0$. \label{2}

	    \item  Suppose that $(\lambda, \beta) = 0$. Then $(P_{\lambda} : M_{\lambda + \beta}) > 0$. \label{3}

	    \item Suppose that $(\lambda, \beta) = 0$ and  $\langle s_{\alpha_{i-1}}\cdots s_{\alpha_1}(\lambda + \beta), \alpha_i^\vee\rangle < 0$ for all $i \in {1,2,\dots,k}$. Then $(P_{\lambda} : M_{w(\lambda + \beta)}) > 0$. \label{4}

	    \item Suppose that $(\lambda, \beta) = (\lambda + \beta, \gamma) = 0$ and $\mathrm{ht}(\beta) < \mathrm{ht}(\gamma)$. Then $(P_{\lambda} : M_{\lambda + \beta + \gamma}) > 0$. \label{5}

	    \item Suppose that $(\lambda, \beta) = (\lambda + \beta, \gamma) = 0$, $\mathrm{ht}(\beta) < \mathrm{ht}(\gamma)$, and $\langle s_{\alpha_{i-1}}\cdots s_{\alpha_1}(\lambda + \beta + \gamma), \alpha_i^\vee\rangle < 0$ for all $i \in {1,2,\dots,k}$. Then $(P_{\lambda} : M_{w(\lambda + \beta + \gamma)}) > 0$. \label{6}
\end{enumerate}
\end{prop}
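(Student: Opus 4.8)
The plan is to deduce all six statements from BGG reciprocity \eqref{BGGrecip}, which turns each assertion $(P_\lambda : M_\mu) > 0$ into the equivalent statement that $L_\lambda$ is a composition factor of the target Verma module, i.e. $[M_\mu : L_\lambda] > 0$, where $\mu$ is $s_{\alpha_1}\lambda$, $w\lambda$, $\lambda+\beta$, $w(\lambda+\beta)$, $\lambda+\beta+\gamma$, or $w(\lambda+\beta+\gamma)$ respectively. In every case it then suffices to exhibit a nonzero homomorphism (or chain of homomorphisms) into $M_\mu$ whose image is a highest-weight module of highest weight $\lambda-\rho$, since such an image automatically contains $L_\lambda$ as a composition factor. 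This is exactly the composition-factor content extracted from the tilting criteria of \cite[Proposition 2.2]{chengwangg3} (themselves consequences of the super Jantzen sum formula of \cite{musson2012lie}); the function of BGG reciprocity is precisely to transport those tilting-module inequalities to the projective side.

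For the even-root statements (\ref{1}) and (\ref{2}) I would invoke the super analogue of the BGG embedding theorem (cf. \cite{humphreys2008representations}): for a positive even root $\alpha$ and a weight $\nu$ with $\langle \nu, \alpha^\vee\rangle > 0$ there is an injection $M_{s_\alpha\nu}\hookrightarrow M_\nu$. For (\ref{1}), set $\nu = s_{\alpha_1}\lambda$; the hypothesis $\langle\lambda,\alpha_1^\vee\rangle<0$ gives $\langle\nu,\alpha_1^\vee\rangle = -\langle\lambda,\alpha_1^\vee\rangle>0$, so $M_\lambda\hookrightarrow M_{s_{\alpha_1}\lambda}$ and hence $[M_{s_{\alpha_1}\lambda}:L_\lambda]\geq[M_\lambda:L_\lambda]=1$. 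For (\ref{2}), the conditions $\langle s_{\alpha_{i-1}}\cdots s_{\alpha_1}\lambda,\alpha_i^\vee\rangle<0$ are arranged so that each successive step admits such an injection, producing a chain $M_\lambda\hookrightarrow M_{s_{\alpha_1}\lambda}\hookrightarrow\cdots\hookrightarrow M_{w\lambda}$; since the composite is injective, $[M_{w\lambda}:L_\lambda]>0$.

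For the isotropic-root statements I would use the odd-reflection homomorphisms: when $(\nu,\beta)=0$ for a positive isotropic root $\beta$, there is a nonzero map $M_{\nu-\beta}\to M_\nu$ (the analogue of the map from Lemma 2.24 in \cite{cheng2012dualities} already used in \cref{blocks}). For (\ref{3}), isotropy gives $(\lambda+\beta,\beta)=(\lambda,\beta)=0$, so taking $\nu=\lambda+\beta$ yields a nonzero $M_\lambda\to M_{\lambda+\beta}$ whose image contains $L_\lambda$, giving $[M_{\lambda+\beta}:L_\lambda]>0$. Statement (\ref{4}) follows by composing this with the even injections of (\ref{2}): the hypotheses $\langle s_{\alpha_{i-1}}\cdots s_{\alpha_1}(\lambda+\beta),\alpha_i^\vee\rangle<0$ give $M_{\lambda+\beta}\hookrightarrow M_{w(\lambda+\beta)}$, and injectivity propagates the composition factor. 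For (\ref{5}) one composes two odd maps $M_\lambda\to M_{\lambda+\beta}\to M_{\lambda+\beta+\gamma}$, the second using $(\lambda+\beta,\gamma)=0$; if the composite is nonzero its image again contains $L_\lambda$. Statement (\ref{6}) is then (\ref{5}) followed by the even injections controlled by its $\langle\cdot,\alpha_i^\vee\rangle<0$ hypotheses.

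The main obstacle is the two-isotropic-root case underlying (\ref{5}) and (\ref{6}): a priori the composite $M_\lambda\to M_{\lambda+\beta}\to M_{\lambda+\beta+\gamma}$ could vanish, so one must show $L_\lambda$ genuinely survives in $M_{\lambda+\beta+\gamma}$. This is exactly where the hypothesis $\mathrm{ht}(\beta)<\mathrm{ht}(\gamma)$ enters, and where I would appeal to the super Jantzen sum formula as packaged in \cite[Proposition 2.2]{chengwangg3}: the relevant contribution occurs with a strictly positive coefficient and the height ordering prevents cancellation against the other terms, forcing $[M_{\lambda+\beta+\gamma}:L_\lambda]>0$. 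The remaining bookkeeping — verifying that each intermediate weight is linked to $\lambda$ and lies in the correct block, so that the maps above exist — is routine given the linkage description in \cref{link}.
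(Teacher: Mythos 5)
Your strategy is sound and in fact supplies \emph{more} argument than the paper itself does: the paper proves Proposition~\ref{filprop} essentially by citation, reformulating the tilting-module criteria of \cite[Proposition 2.2]{chengwangg3} (whose engine is the super Jantzen sum formula of \cite{musson2012lie}) through BGG reciprocity \eqref{BGGrecip}. You take the same reciprocity step, but for items (\ref{1})--(\ref{4}) you replace the Jantzen-sum-formula engine with explicit singular-vector constructions --- even-reflection embeddings plus the isotropic-root maps of Lemma 2.24 in \cite{cheng2012dualities} --- which is a genuinely different and more constructive route: it exhibits concrete homomorphisms whose images are highest weight modules with head $L_\lambda$, so $[M_\mu : L_\lambda] > 0$ follows by inspection. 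For (\ref{5})--(\ref{6}) you revert to the paper's own move, outsourcing the two-isotropic-root case to the sum formula as packaged in \cite{chengwangg3}; you correctly identify this as the genuinely hard point (the composite $M_\lambda \to M_{\lambda+\beta} \to M_{\lambda+\beta+\gamma}$ can vanish --- indeed $f_\beta^2 = 0$ for isotropic $\beta$ --- and the hypothesis $\mathrm{ht}(\beta) < \mathrm{ht}(\gamma)$ is what controls cancellation in the alternating isotropic terms of the sum formula). What your route buys is transparency in the first four cases; what the paper's citation buys is uniformity, one mechanism covering all six.

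Two super-specific points need patching before (\ref{1}), (\ref{2}), and (\ref{4}) are complete. First, \cite{humphreys2008representations} covers only semisimple Lie algebras, and the classical embedding theorem requires $\langle \nu, \alpha^\vee\rangle \in \Z_{>0}$, not merely $>0$; for the long even roots $\alpha = 2\delta_i$ one has $\langle \lambda, \alpha^\vee \rangle = a_i \in \Z + \frac{1}{2}$ for $\lambda \in X + \rho$, so the hypothesis fails literally. The required singular vector instead comes from the odd non-isotropic root $\delta_i$ via $\osp(1|2)$-theory (here $2a_i$ is an odd integer, and the paper's observation that weights in $X+\rho$ are never orthogonal to $\delta_1, \delta_2$ is exactly what makes this work). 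Second, injectivity of Verma homomorphisms is not automatic over a Lie superalgebra --- $U(\n^-)$ is not a domain, and isotropic-root maps are typically non-injective --- and your arguments for (\ref{2}) and (\ref{4}) genuinely use injectivity of the even-type maps, since you transport $[\,\cdot\, : L_\lambda] > 0$ along submodule inclusions. This is true here but deserves its own justification: right multiplication by the relevant singular vector is injective on $U(\n^-)$ because its symbol in the associated graded lies in the polynomial part $S(\n^-_{\oo})$ and is a nonzerodivisor there (for the $\osp(1|2)$-type vectors, use that $f_{\delta_i}^2$ is a nonzero multiple of $f_{2\delta_i}$). With these two repairs your proof of (\ref{1})--(\ref{4}) stands on its own, and (\ref{5})--(\ref{6}) rest on the cited computation, exactly as in the paper.
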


\begin{cor}\label{corlen}
Suppose $\lambda \in X + \rho$ is atypical such that $\lambda - \rho$ is atypical. Then $P_\lambda$ must have a Verma flag of length greater than $1$.
\end{cor}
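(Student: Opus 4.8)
The plan is to produce a single Verma factor in the standard filtration of $P_\lambda$ that is distinct from $M_\lambda$ itself. Since $M_\lambda$ always appears in that filtration, and with multiplicity exactly $1$, by fact~(\ref{p4}) in the list of properties of projectives, any additional Verma factor immediately forces the length of the flag to exceed $1$. So the entire content reduces to exhibiting one more Verma module.

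First I would translate the atypicality hypothesis into the language of Proposition~\ref{filprop}. Writing the highest weight as $\lambda - \rho$ with $\lambda \in X + \rho$, the definition of atypicality in \cref{link} says precisely that there is a positive isotropic root $\beta \in \Phi^+_{\one}$ with $(\lambda, \beta) = 0$. Concretely, for $\lambda - \rho = (a, b \mid c) - \rho$ with $a,b,c \in \Z + \tfrac12$, one checks that $(\lambda, \delta_1 \pm \epsilon) = a \mp c$ and $(\lambda, \delta_2 \pm \epsilon) = b \mp c$, so such a $\beta$ exists exactly when $|c| = |a|$ or $|c| = |b|$, which is the stated atypicality condition.

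With such a $\beta$ in hand, I would invoke Proposition~\ref{filprop}(\ref{3}): its hypothesis $(\lambda, \beta) = 0$ is now satisfied, so $(P_\lambda : M_{\lambda + \beta}) > 0$. Because $\beta \neq 0$, we have $\lambda + \beta \neq \lambda$, so $M_{\lambda + \beta}$ is genuinely a different Verma module from $M_\lambda$. (As a consistency check, $\lambda + \beta \geq \lambda$ in the Bruhat order, since $\beta \in \Z_{\geq 0}\Pi$ and $\lambda + \beta \sim \lambda$, matching fact~(\ref{p4}).) Combining the multiplicity-one occurrence of $M_\lambda$ with this new factor shows the standard filtration of $P_\lambda$ has at least two terms, hence length greater than $1$.

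I do not expect a substantive obstacle here: the argument is a one-step application of Proposition~\ref{filprop}(\ref{3}), and the only thing to verify with care is the bookkeeping of the $\rho$-shift, namely that ``$\lambda - \rho$ atypical'' is literally the statement ``some positive isotropic root pairs to zero against $\lambda \in X + \rho$.'' That equivalence is immediate from the definition, so all of the real weight of the corollary is borne by Proposition~\ref{filprop}, which I am taking as given.
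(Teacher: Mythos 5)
Your proposal is correct and is exactly the argument the paper intends: the corollary is stated as an immediate consequence of Proposition~\ref{filprop}, and your one-step application of part~(\ref{3}) --- atypicality of $\lambda-\rho$ gives an isotropic $\beta\in\Phi^+_{\one}$ with $(\lambda,\beta)=0$, hence $(P_\lambda : M_{\lambda+\beta})>0$ in addition to the multiplicity-one occurrence of $M_\lambda$ --- is that consequence, with the $\rho$-shift bookkeeping handled correctly.
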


\subsection{Strategy}\label{strategy}
Given a $\lambda \in X + \rho$ such that $\lambda - \rho$ is atypical, we seek to deduce the standard filtration formula of $P_{\lambda}$. To do so, we choose a $\mu \in X + \rho$ such that we know a standard filtration for $P_{\mu}$. This is often accomplished by letting $\mu \coloneqq \lambda - \nu$, where $\nu$ is a weight (often the lowest) in some finite-dimensional representation $W$ such that $\mu - \rho$ is typical; Lemma~\ref{typAndreg} and Lemma~\ref{typAndsing} tell us the structure of $P_{\mu}$. Proposition~\ref{sum} can be used to deduce the Verma modules which appear in a standard filtration of the projective $P_{\mu} \otimes W$, which must include $M_{\lambda}$. Our next step is to project $P_{\mu} \otimes W$ onto the block corresponding to the linkage class of $\lambda - \rho$. We denote the resulting projection as $\mathrm{pr}_\lambda(P_\mu \otimes W)$. By Lemma~\ref{lowest}, if $M_{\lambda}$ has the lowest weight of all the Verma modules in the standard filtration of the projection, $P_\lambda$ must appear in that projection as a direct summand. The projection itself is done by collecting all Verma modules in the standard filtration whose weights are linked to $\lambda - \rho$.
 
In this projection, we apply Proposition \ref{filprop} to see which Verma modules appear in the standard filtration of $P_\lambda$. These necessarily appear in the projection because $P_\lambda$ is a direct summand. Then, we generally try to argue that there is no other direct summand (i.e. $P_\lambda$ is the projection). This is often done by showing that no other indecomposable projective can appear in the projection, since there are not enough terms. In certain special cases, this method fails, and we get two possible standard filtrations of $P_\lambda$. To determine which one is correct, we generally show that one of them is not a projective.
 
For convenience, we introduce the following notation which we use extensively in the presentation of our results and proofs to save space and improve clarity. Let $\lambda \in X+\rho$ be such that $\lambda - \rho$ is anti-dominant. Let $\mathcal{W}^{\lambda}$ be a minimal set of left-coset representatives of $\mathcal{W}/\mathcal{W}_{\lambda}$, where $\mathcal{W}_{\lambda} = \{w \in \mathcal{W} \ | \  w\lambda = \lambda\}$. Then, if $\sigma\in\mathcal{W}^{\lambda}$, we denote 
\[
    \sum_{\tau \geq \sigma, \tau \in \mathcal{W}^{\lambda}} M_{\tau\lambda}
\]
by
\[
    \sum M_{\sigma\lambda}.
\]
For example, we may write 
\[
    M_{\frac{1}{2},-\frac{3}{2}|\frac{1}{2}} + M_{\frac{1}{2},\frac{3}{2}|\frac{1}{2}} + M_{\frac{3}{2},-\frac{1}{2}|\frac{1}{2}} + M_{\frac{3}{2},\frac{1}{2}|\frac{1}{2}}
\]
as
\[
    \sum M_{\frac{1}{2},-\frac{3}{2}|\frac{1}{2}}.
\]

\section{Character Formulae for \texorpdfstring{$\osp(3|4)$}{osp(3|4)}}\label{sec5}
In this section, we determine Verma multiplicities for standard filtration formulae for projective covers of simple modules of $\osp(3|4)$ with integral, atypical highest weight.
\subsection{Results}
Let $\g = \osp(3|4)$ have the standard choices of Cartan subalgebra, bilinear form, root system, positive, and fundamental system as described in \cref{prelims}. Recall the notation described in \cref{link} to describe a weight in $\h^*$. We have the following Theorems \ref{thm:spo431} to \ref{thm:spo434} that describe standard filtrations of projectives in these blocks. We provide the proofs after presenting all four theorems.

\begin{thm}\label{thm:spo431}
    Let $\lambda-\rho = (a,b \ | \ c)-\rho$ be an atypical weight with $a,b,c \in \frac{1}{2} + \mathbb{Z}$, $a,b>0$, and $c \in \{\pm a, \pm b\}$. The projective covers $P_{\lambda}$ have the following Verma flag formulae.
    
    \begin{enumerate}[label=(\arabic*), ref=\arabic*]
        \item Suppose that $a>b>0$.
        \begin{enumerate}[label=(\theenumi.\arabic*), ref=\arabic*]
            \item When $c=a$, we have 
            \[P_{a,b|a} = M_{a,b|a} + M_{a+1,b|a+1}.\]
            
            \item When $c=-a$, we have
            \[P_{a,b|-a} = M_{a,b|-a} + M_{a,b|a} + M_{a+1,b|-a-1} + M_{a+1,b|a+1}.\]
            
            \item When $c=b$, we have
            \[P_{a,b|b} = M_{a,b|b} + M_{a,b+1|b+1}\] 
            for $b<a-1$, and
            \[P_{a,a-1|a-1} = M_{a,a-1|a-1} + M_{a,a|a} + M_{a+1,a|a+1}.\]
            
            \item When $c=-b$, we have
            \[P_{a,b|-b} = M_{a,b|-b} + M_{a,b|b} + M_{a,b+1|-b-1} + M_{a,b+1|b+1}\]
            for $b<a-1$, and
            \begin{align*}
                P_{a,a-1|-a+1} &= M_{a,a-1|-a+1} + M_{a,a-1|a-1} + M_{a,a|-a} + M_{a,a|a} \\
                &+ M_{a+1,a|-a-1} + M_{a+1,a|a+1}.
            \end{align*}
        \end{enumerate}

        \item Suppose that $b>a>0$.
        \begin{enumerate}[label=(\theenumi.\arabic*), ref=\arabic*]
            \item When $c=a$, we have
            \[P_{a,b|a} = M_{a,b|a} + M_{b,a|a} + M_{a+1,b|a+1} + M_{b,a+1|a+1}\]
            for $b>a+1$, and
            \[P_{a,a+1|a} = M_{a,a+1|a} + M_{a+1,a|a} + M_{a+1,a+1|a+1}.\]
            
            \item When $c=-a$, we have
            \begin{align*}
                P_{a,b|-a} &= M_{a,b|-a} + M_{a,b|a} + M_{b,a|-a} + M_{b,a|a} \\
                &+ M_{a+1,b|-a-1} + M_{a+1,b|a+1} + M_{b,a+1|-a-1} + M_{b,a+1|a+1}
            \end{align*}
            for $b>a+1$, and 
            \begin{align*}
                P_{a,a+1|-a} &= M_{a,a+1|-a} + M_{a,a+1|a} + M_{a+1,a|-a} 
                + M_{a+1,a|a} \\
                &+ M_{a+1,a+1|-a-1} + M_{a+1,a+1|a+1}.
            \end{align*}
            
            \item When $c=b$, we have
            \[P_{a,b|b} = M_{a,b|b} + M_{b,a|b} + M_{a,b+1|b+1} + M_{b+1,a|b+1}.\]
            
            \item When $c=-b$, we have
            \begin{align*}
                P_{a,b|-b} 
                &= M_{a,b|-b} + M_{a,b|b} + M_{b,a|-b} + M_{b,a|b} \\
                &+ M_{a,b+1|-b-1} + M_{a,b+1|b+1} + M_{b+1,a|-b-1} + M_{b+1,a|b+1} \\
                &= \sum M_{a,b|-b} + \sum M_{a,b+1|-b-1}.
            \end{align*}
        \end{enumerate}

        \item Suppose that $a=b>0$.
        \begin{enumerate}[label=(\theenumi.\arabic*), ref=\arabic*]
            \item When $c=a$, we have
            \[P_{a,a|a} = M_{a,a|a} + M_{a,a+1|a+1} + M_{a+1,a|a+1}.\]
            
            \item When $c=-a$, we have
            \begin{align*}
                P_{a,a|-a} &= M_{a,a|-a} + M_{a,a|a} + M_{a,a+1|-a-1} + M_{a,a+1|a+1} \\
                &+ M_{a+1,a|-a-1} + M_{a+1,a|a+1}.
            \end{align*}
        \end{enumerate}
    \end{enumerate}
\end{thm}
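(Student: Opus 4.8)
The plan is to prove all the formulae of Theorem~\ref{thm:spo431} by the translation-functor strategy of \cref{strategy}, working case by case according to the ordering of $a,b$, the sign of $c$, and the boundary coincidences $b=a-1$, $b=a+1$, $a=b$. For a target $\lambda=(a,b\mid c)$ the atypicality is of degree one, with a single distinguished positive isotropic root $\beta$ satisfying $(\lambda,\beta)=0$: explicitly $\beta=\delta_1+\epsilon$ when $c=a$, $\beta=\delta_1-\epsilon$ when $c=-a$, and $\beta=\delta_2\pm\epsilon$ when $c=\pm b$. The two ingredients are a lower bound on the multiplicities $(P_\lambda:M_\nu)$ coming from Proposition~\ref{filprop}, and an upper bound coming from realizing $P_\lambda$ as the whole of a projected tensor product.

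For the lower bound I would produce each listed Verma module by a concrete filtration datum. The head $M_\lambda$ occurs exactly once by property~(\ref{p4}), and the single isotropic shift $M_{\lambda+\beta}$ occurs by Proposition~\ref{filprop}(\ref{3}); in the generic interior cases with $c>0$ this already yields the two-term answer. When $c<0$ the even reflection $s_\epsilon$ is relevant: one computes $\langle\lambda,\epsilon^\vee\rangle<0$, so Proposition~\ref{filprop}(\ref{1}) produces $M_{s_\epsilon\lambda}$ (the term $M_{a,b\mid a}$, resp.\ $M_{a,b\mid b}$) and Proposition~\ref{filprop}(\ref{4}) its shifted partner $M_{s_\epsilon(\lambda+\beta)}$, giving the four-term formulae. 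The cases $b>a$ further pick up the reflection $r=s_{\delta_1-\delta_2}$, producing $M_{r\lambda}$ and its shifts since $r\lambda\geq\lambda$ there. Finally, at the coincidences $b=a-1$ and $a=b$ a second isotropic direction opens up: $\lambda$ (or $\lambda+\beta$) becomes orthogonal to a further positive isotropic root $\gamma$ with $\mathrm{ht}(\beta)<\mathrm{ht}(\gamma)$, and Proposition~\ref{filprop}(\ref{5})--(\ref{6}) contribute the double shift $M_{\lambda+\beta+\gamma}$, which is exactly the extra Verma module appearing in the three- and six-term degenerate formulae.

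For the upper bound I would, in each case, pick a finite-dimensional module $W\in\{V,S^2V,\g\}$ and a weight $\nu$ of $W$ for which $\mu:=\lambda-\nu$ is typical (the three modules supply enough weights to shift the $c$-coordinate off every wall $c\in\{\pm a,\pm b\}$), write down the full Verma flag of $P_\mu$ via Lemma~\ref{typAndreg} when $\mu-\rho$ is dot-regular and via Lemma~\ref{typAndsing} when it is dot-singular, and expand $P_\mu\otimes W$ using Proposition~\ref{sum}. Projecting onto the linkage class of $\lambda-\rho$ retains exactly those $M_{w\mu+\eta}$ (with $\eta$ a weight of $W$) linked to $\lambda-\rho$; I would check in each case that $M_\lambda$ has the lowest weight among the surviving Verma modules, so that Lemma~\ref{lowest} exhibits $P_\lambda$ as a direct summand of $\mathrm{pr}_\lambda(P_\mu\otimes W)$. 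Comparing the Verma list of this projection with the lower bound already established, the two coincide in the interior cases, forcing $\mathrm{pr}_\lambda(P_\mu\otimes W)=P_\lambda$ and thereby proving both the upper bound and indecomposability at once.

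The main obstacle is the upper bound in the degenerate cases. There the projection can be strictly larger than the list produced by Proposition~\ref{filprop}, so one must rule out a second indecomposable summand: any such $P_{\lambda'}$ would have lowest Verma $M_{\lambda'}$ among the leftover terms and, by Corollary~\ref{corlen}, a flag of length greater than one, and I would show the leftover cannot accommodate this. When even the choice of $(\mu,W)$ leaves two a priori possible flags for $P_\lambda$, I would resolve the ambiguity as in \cref{strategy}: exhibit the spurious candidate and show it cannot be projective, either because its lowest weight violates a necessary inequality of Proposition~\ref{filprop} or because adding it back would overcount a Verma module known by property~(\ref{p4}) to occur only once. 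Carrying out this bookkeeping uniformly across the roughly dozen sub-cases, while correctly identifying the surviving linked weights and verifying the lowest-weight hypothesis of Lemma~\ref{lowest}, is the real work of the proof.
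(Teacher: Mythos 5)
Your overall architecture --- lower bounds from Proposition~\ref{filprop}, upper bounds by projecting $P_\mu\otimes W$ and invoking Lemma~\ref{lowest}, then ruling out extra summands --- is exactly the paper's strategy, and your bookkeeping of which items of Proposition~\ref{filprop} force which Verma modules is correct in the generic cases and for $b=a-1$ (where the double shift via items (5)--(6) does produce $M_{a+1,a|\mp(a+1)}$). But there are two problems. The minor one: at the coincidence $a=b$ the extra terms do \emph{not} come from a double shift $M_{\lambda+\beta+\gamma}$. For $\lambda=(a,a|a)$ one has $(\lambda+\beta,\gamma)=(\delta_2+\epsilon,\delta_1+\epsilon)=-1\neq 0$, so items (5)--(6) do not apply; instead $\lambda$ itself is orthogonal to \emph{two} isotropic roots $\delta_1+\epsilon$ and $\delta_2+\epsilon$, and item (3) applied twice yields the two single shifts $M_{a+1,a|a+1}$ and $M_{a,a+1|a+1}$ (indeed $\lambda+\beta+\gamma=(a+1,a+1|a+2)$ is typical and not even in the block).

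The serious gap is your insistence that the auxiliary weight $\mu$ always be \emph{typical}. For the degenerate subcases $\lambda=(a,a-1|\pm(a-1))$ this fails: every typical $\mu$ reachable from $\lambda$ by a weight of $V$, $S^2V$, or $\g$ produces a projection in which $M_\lambda$ is \emph{not} the lowest linked Verma module, so Lemma~\ref{lowest} never exhibits $P_\lambda$ as a summand and your fallback (ruling out a second summand) has nothing to act on. Concretely, take $\lambda=(a,a-1|a-1)$ and the natural choice $\mu=(a,a|a-1)$: this $\mu$ is typical, dot-singular, and dominant, so $P_\mu=M_\mu$, and
\[
\mathrm{pr}_{\lambda}\left(M_{a,a|a-1}\otimes V\right)=M_{a-1,a|a-1}+M_{a,a-1|a-1}+M_{a,a|a},
\]
whose lowest weight is $(a-1,a|a-1)<\lambda$; in fact this projection is exactly $P_{a-1,a|a-1}$ (the degenerate formula of case (2.1)), containing no copy of $P_\lambda$ at all. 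The other typical candidates ($\mu=(a,a-2|a-1)$, $(a,a-1|a-2)$, etc.) fail the same way, since lower linked weights such as $(a,a-2|a-2)$ sit at distance $\delta_2-\epsilon$ or $\delta_1-\delta_2$ from these $\mu$, and those are weights of every representation in your list. The paper's proof avoids this by bootstrapping within the atypical blocks: it takes $\mu=\lambda+\delta_1=(a+1,a-1|\pm(a-1))$, an \emph{atypical} weight whose projective is already known from the standard cases, and then $\mathrm{pr}_\lambda(P_\mu\otimes V)$ consists of precisely the three (resp.\ six) claimed terms with $\lambda$ lowest, so the argument closes with no ambiguity. Your plan needs this inductive use of previously computed atypical projectives; with typical sources only, the cases $(a,a-1|\pm(a-1))$ cannot be completed. (For $\lambda=(a,a|\pm a)$ your typical choice does work: $\mu=(a,a|\pm(a+1))$ is the paper's own source there.)
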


\begin{thm}\label{thm:spo432}
    Let $\lambda-\rho = (a,b \ | \ c)-\rho$ be an atypical weight with $a,b,c \in \frac{1}{2} + \mathbb{Z}$, $a>0>b$, and $c \in \{\pm a, \pm b\}$. The projective covers $P_{\lambda}$ have the following Verma flag formulae.
    
    \begin{enumerate}[label=(\arabic*), ref=\arabic*]        
        \item Suppose that $a>-b>0$
        \begin{enumerate}[label=(\theenumi.\arabic*), ref=\arabic*] 
            \item When $c=a$,
            \[P_{a,b|a} = M_{a,b|a} + M_{a,-b|a} + M_{a+1,b|a+1} + M_{a+1,-b|a+1}.\]
            
            \item When $c=-a$,
            \begin{align*}
                P_{a,b|-a} &= M_{a,b|-a} + M_{a,b|a} + M_{a,-b|-a} + M_{a,-b|a} \\
                &+ M_{a+1,b|-a-1} + M_{a+1,b|a+1} + M_{a+1,-b|-a-1} + M_{a+1,-b|a+1}.
            \end{align*}
            
            \item When $c=-b$,
            \[P_{a,b|-b} = M_{a,b|-b} + M_{a,-b|-b} + M_{a,b+1|-b-1} + M_{a,-b-1|-b-1}\]
            for $b<-\frac{1}{2}$, and
            \[P_{a,-\frac{1}{2}|\frac{1}{2}} = M_{a,-\frac{1}{2}|\frac{1}{2}} + M_{a,\frac{1}{2}|\frac{1}{2}} + M_{a,\frac{1}{2}|-\frac{1}{2}} + M_{a,\frac{3}{2}|\frac{3}{2}}\]
            for $a>\frac{3}{2}$, and
            \begin{align*}
                P_{\frac{3}{2},-\frac{1}{2}|\frac{1}{2}} 
                &= M_{\frac{3}{2},-\frac{1}{2}|\frac{1}{2}} 
                + M_{\frac{3}{2},\frac{1}{2}|\frac{1}{2}} 
                + M_{\frac{3}{2},\frac{1}{2}|-\frac{1}{2}} 
                + M_{\frac{3}{2},\frac{3}{2}|\frac{3}{2}} \\
                &+ M_{\frac{5}{2},\frac{3}{2}|\frac{5}{2}}.
            \end{align*}
            
            \item When $c=b$,
            \begin{align*}
                P_{a,b|b} 
                &= M_{a,b|b} + M_{a,b|-b} + M_{a,-b|b} + M_{a,-b|-b} \\
                &+ M_{a,b+1|b+1} + M_{a,b+1|-b-1} + M_{a,-b-1|b+1} + M_{a,-b-1|-b-1}
            \end{align*}
            for $b<-\frac{1}{2}$, and
            \begin{align*}
                P_{a,-\frac{1}{2}|-\frac{1}{2}} 
                &= M_{a,-\frac{1}{2}|-\frac{1}{2}} + M_{a,-\frac{1}{2}|\frac{1}{2}} + M_{a,\frac{1}{2}|-\frac{1}{2}} + M_{a,\frac{1}{2}|\frac{1}{2}} \\
                &= \sum M_{a,-\frac{1}{2}|-\frac{1}{2}}.
            \end{align*}
        \end{enumerate}
        
        \item Suppose that $-b>a>0$.
        \begin{enumerate}[label=(\theenumi.\arabic*), ref=\arabic*] 
            \item When $c=a$, we have
            \[P_{a,b|a} = \sum M_{a,b|a} + \sum M_{a+1,b|a+1}.\]
            
            \item When $c=-a$, we have
            \[P_{a,b|-a} = \sum M_{a,b|-a} + \sum M_{a+1,b|-a-1}.\]
            
            \item When $c=-b$, we have
            \[P_{a,b|-b} = \sum M_{a,b|-b} + \sum M_{a,b+1|-b-1}.\]
            
            \item When $c=b$, we have
            \[P_{a,b|b} = \sum M_{a,b|b} + \sum M_{a,b+1|b+1}.\]
        \end{enumerate}

        \item Suppose that $a=-b>0$.
        \begin{enumerate}[label=(\theenumi.\arabic*), ref=\arabic*] 
            \item When $c=a$, we have
            \begin{align*}
                P_{a,-a|a} 
                &= M_{a,-a|a} + M_{a,a|a} + M_{a,-a+1|a-1} + M_{a,a-1|a-1} \\
                &+ M_{a+1,-a|a+1} + M_{a+1,a|a+1}
            \end{align*}
            for $a>\frac{1}{2}$, and
            \begin{align*}
                P_{\frac{1}{2},-\frac{1}{2}|\frac{1}{2}} 
                &= M_{\frac{1}{2},-\frac{1}{2}|\frac{1}{2}} 
                + M_{\frac{1}{2},\frac{1}{2}|\frac{1}{2}} 
                + M_{\frac{1}{2},\frac{1}{2}|-\frac{1}{2}} 
                + M_{\frac{1}{2},\frac{3}{2}|\frac{3}{2}} \\
                &+ M_{\frac{3}{2},-\frac{1}{2}|\frac{3}{2}}
                + M_{\frac{3}{2},\frac{1}{2}|-\frac{3}{2}}
                + 2M_{\frac{3}{2},\frac{1}{2}|\frac{3}{2}}
                + M_{\frac{5}{2},\frac{1}{2}|\frac{5}{2}}.
            \end{align*}
            
            \item When $c=-a$, we have
            \[P_{a,-a|-a} = \sum M_{a,-a|-a} + \sum M_{a,-a+1|-a+1} +\sum M_{a+1,-a|-a-1}\]
            for $a>\frac{1}{2}$, and
            \[P_{\frac{1}{2},-\frac{1}{2}|-\frac{1}{2}} = \sum M_{\frac{1}{2},-\frac{1}{2}|-\frac{1}{2}} + \sum M_{\frac{3}{2},-\frac{1}{2}|-\frac{3}{2}}.\]
        \end{enumerate}
    \end{enumerate}
\end{thm}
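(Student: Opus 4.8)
The plan is to apply the translation-functor strategy of \cref{strategy} to each weight $\lambda=(a,b\,|\,c)$ (the $\rho$-shifted highest weight) with $a>0>b$ and $c\in\{\pm a,\pm b\}$, organizing the argument by the three regimes $a>-b$, $-b>a$, and $a=-b$ exactly as in the statement. For a fixed $\lambda$ I would first identify the orthogonal positive isotropic root from the inner products $(\lambda,\delta_1\mp\epsilon)=a\pm c$ and $(\lambda,\delta_2\mp\epsilon)=b\pm c$; thus the four sub-cases $c=a,\,-a,\,b,\,-b$ single out $\beta=\delta_1+\epsilon,\ \delta_1-\epsilon,\ \delta_2+\epsilon,\ \delta_2-\epsilon$ respectively. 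I then choose a weight $\nu$ of the natural module $V$, of $S^2V$, or of the adjoint $\g$ so that $\mu:=\lambda-\nu$ is typical; Lemma~\ref{typAndreg} (or Lemma~\ref{typAndsing} when the symplectic coordinates of $\mu$ have equal absolute value) gives the full Verma flag of $P_\mu$, and Proposition~\ref{sum} produces the flag of $P_\mu\otimes W$ by shifting that flag through every weight of $W$.

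The next step is to extract the asserted Verma modules as a lower bound via Proposition~\ref{filprop}. For example, in case (1.1) one has $(\lambda,\beta)=0$ for $\beta=\delta_1+\epsilon$, so part~\eqref{3} forces $M_{\lambda+\beta}=M_{a+1,b|a+1}$; since $\langle\lambda,(2\delta_2)^\vee\rangle=b<0$, part~\eqref{1} forces $M_{s_{2\delta_2}\lambda}=M_{a,-b|a}$; and part~\eqref{4} with this same $\beta$ and reflection $w=s_{2\delta_2}$ forces $M_{w(\lambda+\beta)}=M_{a+1,-b|a+1}$. Together with $M_\lambda$ these are precisely the four modules claimed. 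The remaining sub-cases use parts~\eqref{1}--\eqref{4} in the same manner, and the longer six-term formulas near the walls are obtained by feeding a second isotropic root into parts~\eqref{5}--\eqref{6}.

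For the matching upper bound I would form the projection $\mathrm{pr}_\lambda(P_\mu\otimes W)$, arranging the choice of $\nu$ and $W$ so that $M_\lambda$ is the \emph{lowest} Verma occurring in it; Lemma~\ref{lowest} then makes $P_\lambda$ a direct summand. The decisive point is a counting argument: since Proposition~\ref{sum} gives the exact multiset of Vermas in the projection and Proposition~\ref{filprop} already pins down the ones inside $P_\lambda$, it suffices to show there is no room for a second indecomposable summand---any such $P_{\lambda'}$ would contribute $M_{\lambda'}$ as the lowest weight of its own flag (with multiplicity one), and the available weights simply do not support this. Equality follows. The regime $-b>a$ (part~(2)) runs identically, except that $\lambda$ now sits where the residual $\W_{\mathfrak{sp}(4)}$-symmetry forces a whole packet of Vermas to appear together; this is exactly what the $\sum M_{\sigma\lambda}$ abbreviation records, and the packet is read off from the singular typical structure of Lemma~\ref{typAndsing}.

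The main obstacle will be the degenerate weights on the walls: $b=-\tfrac12$ in (1.3)/(1.4) and the entire diagonal $a=-b$ of part~(3), culminating in the principal-block weight $(\tfrac12,-\tfrac12\,|\,\tfrac12)$. There the generic pattern breaks for two related reasons. First, a term the generic formula would predict can fall out of Bruhat range: at $b=-\tfrac12$ the generic candidate $M_{a,-b-1|-b-1}=M_{a,-\frac12|-\frac12}$ fails $M_{a,-\frac12|-\frac12}\ge M_{a,-\frac12|\frac12}$, so by the requirement that every Verma in a flag of $P_\lambda$ lie above $\lambda$ (property~\eqref{p4}) it cannot occur and is replaced by the next admissible weight $M_{a,3/2|3/2}$. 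Second, the principal block produces genuine higher multiplicity, such as the $2M_{3/2,1/2|3/2}$ in (3.1), and the counting then leaves two candidate filtrations for $P_\lambda$. Resolving these is the crux: I would rule out the spurious candidate by showing it cannot be projective---using Corollary~\ref{corlen} to forbid a length-one flag and checking the survivor against BGG reciprocity~\eqref{BGGrecip}---rather than by the routine (if lengthy) root-and-weight bookkeeping that dominates the generic cases.
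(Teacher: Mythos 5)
Your overall scaffolding matches the paper's: lower bounds from Proposition~\ref{filprop}, upper bounds by projecting $P_\mu\otimes W$ onto the block of $\lambda$, Lemma~\ref{lowest} to get $P_\lambda$ as a direct summand, and a term count to exclude further summands. This correctly disposes of the generic cases (1.1), (1.2), (2.1)--(2.4), (3.2), and your explanation of the wall phenomenon at $b=-\tfrac12$ in (1.3)--(1.4) (the generic term $M_{a,-\frac12|-\frac12}$ violates $\mu\geq\lambda$, so it is replaced) is sound and consistent with what the paper's projection computation produces. You also correctly identify where the real difficulty lies: the principal-block weight $\lambda=(\tfrac12,-\tfrac12\,|\,\tfrac12)$ in (3.1), where after Proposition~\ref{filprop} the leftover pair $M_{\frac32,\frac12|\frac32}+M_{\frac52,\frac12|\frac52}$ could constitute the indecomposable projective $P_{\frac32,\frac12|\frac32}$, leaving two candidate flags $Q$ and $Q+R$.

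However, your proposed resolution of that ambiguity does not work, and this is a genuine gap. Corollary~\ref{corlen} only forbids a Verma flag of length one, whereas both candidates here have length $7$ and $9$ respectively, so it decides nothing; and ``checking the survivor against BGG reciprocity'' is circular at this stage, because the composition multiplicities $[M_\mu:L_\lambda]$ are not independently known --- in this paper they are \emph{derived} from the projective flags via \eqref{BGGrecip} only afterwards, in Theorem~\ref{jordanholder}. The paper's actual argument requires a further idea you do not have: apply the translation functor a second time to the candidates themselves. One tensors $Q$ and $R=P_{\frac32,\frac12|\frac32}$ with the adjoint representation $\g$ and projects onto the block of $\mu=(\tfrac12,-\tfrac52|\tfrac12)$; if $Q$ were projective, $\mathrm{pr}_\mu(Q\otimes\g)$ would be projective and would have to decompose into indecomposables forced successively by Lemma~\ref{lowest} (first $P_{\frac12,-\frac52|\frac12}$, then $P_{\frac12,\frac52|-\frac12}$, then $P_{\frac52,-\frac12|\frac12}$), and the explicit count in Table~\ref{tab:1-11} shows $\mathrm{pr}_\mu(Q\otimes\g)$ has too few terms to support this. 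Hence $Q$ is not projective and $P_\lambda=Q+R$. Without this second-translation counting step (or some substitute of comparable strength), your argument cannot distinguish $Q$ from $Q+R$, so the exceptional formula in (3.1) --- including the multiplicity-two term $2M_{\frac32,\frac12|\frac32}$ --- remains unproved.
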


\begin{thm}\label{thm:spo433}
    Let $\lambda-\rho = (a,b \ | \ c)-\rho$ be an atypical weight with $a,b,c \in \frac{1}{2} + \mathbb{Z}$, $b>0>a$, and $c \in \{\pm a, \pm b\}$. The projective covers $P_{\lambda}$ have the following Verma flag formulae.
    
    \begin{enumerate}[label=(\arabic*), ref=\arabic*]        
        \item Suppose that $a<-b<0$.
        \begin{enumerate}[label=(\theenumi.\arabic*), ref=\arabic*] 
            \item When $c=-a$, we have 
            \[P_{a,b|-a} = \sum M_{a,b|-a} + \sum M_{a+1,b|-a-1}.\]
            
            \item When $c=a$, we have
            \[P_{a,b|a} = \sum M_{a,b|a} + \sum M_{a+1,b|a+1}.\]
            
            \item When $c=b$, we have
            \[P_{a,b|b} = \sum M_{a,b|b} + \sum M_{a,b+1|b+1}.\]
            
            \item When $c=-b$, we have
            \[P_{a,b|-b} = \sum M_{a,b|-b} + \sum M_{a,b+1|-b-1}.\]
        \end{enumerate}

        \item Suppose that $-b<a<0$.
        \begin{enumerate}[label=(\theenumi.\arabic*), ref=\arabic*] 
            \item When $c=-a$, we have 
            \[P_{a,b|-a} = \sum M_{a,b|-a} + \sum M_{a+1,b|-a-1}\]
            for $a<-\frac{1}{2}$, and 
            \begin{align*}
                P_{-\frac{1}{2}, b|\frac{1}{2}}
                &= \sum M_{-\frac{1}{2}, b|\frac{1}{2}}\\
                &+ M_{\frac{1}{2}, b|-\frac{1}{2}} + M_{b, \frac{1}{2}|-\frac{1}{2}} + M_{\frac{3}{2}, b|\frac{3}{2}} + M_{b, \frac{3}{2}|\frac{3}{2}}
            \end{align*}
            for $b>\frac{3}{2}$, and 
            \begin{align*}
                P_{-\frac{1}{2}, \frac{3}{2}|\frac{1}{2}}
                &= \sum M_{-\frac{1}{2}, \frac{3}{2}|\frac{1}{2}}\\
                &+ M_{\frac{1}{2}, \frac{3}{2}|-\frac{1}{2}} + M_{\frac{3}{2}, \frac{1}{2}|-\frac{1}{2}} + M_{\frac{3}{2}, \frac{3}{2}|\frac{3}{2}}.
            \end{align*}
            
            \item When $c=a$, we have
            \[P_{a,b|a} = \sum M_{a,b|a} + \sum M_{a+1,b|a+1}\]
            for $a<-\frac{1}{2}$, and 
            \[P_{-\frac{1}{2}, b|-\frac{1}{2}} = \sum M_{-\frac{1}{2}, b|-\frac{1}{2}}.\]
            
            \item When $c=b$, we have
            \[P_{a,b|b} = \sum M_{a,b|b} + \sum M_{a,b+1|b+1}.\]
            
            \item When $c=-b$, we have
            \[P_{a,b|-b} = \sum M_{a,b|-b} + \sum M_{a,b+1|-b-1}.\]
        \end{enumerate}

        \item Suppose that $a=-b<0$.
        \begin{enumerate}[label=(\theenumi.\arabic*), ref=\arabic*] 
            \item When $c=-a$, we have
            \begin{align*}
                P_{a,-a|-a} &= M_{a,-a|-a} + M_{-a,a|-a} + 2M_{-a,-a|-a} \\
                &+ M_{a,-a+1|-a+1} + M_{-a,-a+1|-a+1} + M_{-a+1,a|-a+1} + M_{-a+1,-a|-a+1} \\
                &+ M_{a+1,-a|-a-1} + M_{-a-1,-a|-a-1} + M_{-a,a+1|-a-1} + M_{-a,-a-1|-a-1} \\
                &= \sum M_{a,-a|-a} + M_{-a,-a|-a} + \sum M_{a,-a+1|-a+1} + \sum M_{a+1,-a|-a-1}
            \end{align*}
            for $a<-\frac{1}{2}$, and 
            \begin{align*}
                P_{-\frac{1}{2},\frac{1}{2}|\frac{1}{2}} 
                &= M_{-\frac{1}{2},\frac{1}{2}|\frac{1}{2}} 
                + M_{\frac{1}{2},-\frac{1}{2}|\frac{1}{2}} 
                + M_{\frac{1}{2},\frac{1}{2}|\frac{1}{2}} 
                + M_{\frac{1}{2},\frac{1}{2}|-\frac{1}{2}} \\
                &+ M_{-\frac{1}{2},\frac{3}{2}|\frac{3}{2}} 
                + M_{\frac{1}{2},\frac{3}{2}|\frac{3}{2}} 
                + M_{\frac{3}{2},-\frac{1}{2}|\frac{1}{2}} 
                + M_{\frac{3}{2},\frac{1}{2}|\frac{1}{2}} \\
                &= \sum M_{-\frac{1}{2},\frac{1}{2}|\frac{1}{2}} + M_{\frac{1}{2},\frac{1}{2}|-\frac{1}{2}} + \sum M_{-\frac{1}{2},\frac{3}{2}|\frac{3}{2}}.
            \end{align*}
            
            \item When $c=a$, we have
            \begin{align*}
                P_{a,-a|a} 
                &= \sum M_{a,-a|a} + M_{-a,-a|a} + M_{-a,-a|-a} \\
                &+ \sum M_{a,-a+1|a-1} + \sum M_{a+1,-a|a+1}
            \end{align*}
            for $a<-\frac{1}{2}$, and 
            \[P_{-\frac{1}{2},\frac{1}{2}|-\frac{1}{2}} = \sum M_{-\frac{1}{2},\frac{1}{2}|-\frac{1}{2}} + M_{\frac{1}{2},\frac{1}{2}|-\frac{1}{2}} + M_{\frac{1}{2},\frac{1}{2}|\frac{1}{2}} + \sum M_{-\frac{1}{2},\frac{3}{2}|-\frac{3}{2}}.\]
        \end{enumerate}
    \end{enumerate}
\end{thm}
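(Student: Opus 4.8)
The plan is to establish all three parts by the translation-functor strategy of \cref{strategy}, handling the generic ``deep'' atypical weights uniformly and then isolating the boundary configurations. A weight $\lambda = (a,b \mid c) \in X+\rho$ with $b>0>a$ and $c \in \{\pm a, \pm b\}$ is atypical of degree one, and the defining orthogonality singles out an isotropic root $\beta \in \{\delta_1 \pm \epsilon,\ \delta_2 \pm \epsilon\}$ with $(\lambda,\beta)=0$; for instance $\beta = \delta_1-\epsilon$ when $c=-a$, so that $\lambda+\beta = (a+1,b\mid -a-1)$. In every formula the two families $\sum M_{\cdots}$ are precisely the $\W$-translates of $M_\lambda$ and of $M_{\lambda+\beta}$, and their qualitative shape is forced a priori: $M_\lambda$ occurs once because $P_\lambda$ is the projective cover of $L_\lambda$, $M_{\lambda+\beta}$ occurs by part (3) of \cref{filprop}, and the even-reflection chains lowering these two weights occur by part (2).

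To pin down the exact multiplicities I would, in each subcase, choose a weight $\nu$ of one of $V$, $S^2V$, or $\g$ so that $\mu := \lambda-\nu$ has $\mu-\rho$ typical, read off the Verma flag of $P_\mu$ from \cref{typAndreg} (or from \cref{typAndsing} when $\mu-\rho$ is dot-singular), expand the flag of $P_\mu\otimes W$ by \cref{sum}, and form $\mathrm{pr}_\lambda(P_\mu\otimes W)$ by retaining only the Vermas linked to $\lambda-\rho$. For the interior weights --- all of part~(1), and part~(2) with $a<-\tfrac12$ --- the natural module $W=V$ suffices: $M_\lambda$ is the lowest weight occurring, so $P_\lambda$ is a summand by \cref{lowest}; \cref{filprop} forces every listed Verma to appear; and a count of terms shows no room for a second indecomposable summand, giving equality. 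Here the bookkeeping is routine and parallels the interior cases of \cref{thm:spo432}, to which this region is carried by the Weyl reflection exchanging $\delta_1$ and $\delta_2$.

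The boundary weights demand separate, larger translations. When $a=-\tfrac12$ the reflection negating $\delta_1$ lies in $\W$ and identifies the two families, which is why several formulas collapse (for example $P_{-1/2,b\mid -1/2}=\sum M_{-1/2,b\mid -1/2}$) or acquire the extra low-lying terms recorded in parts (2) and (3); for these I would take $W=S^2V$ or $W=\g$ to realize the shifts $\delta_i\pm\epsilon$ directly and recompute the projection. The genuinely delicate part is part~(3), $a=-b$, where $\lambda$ is orthogonal to the two non-mutually-orthogonal isotropic roots $\delta_1-\epsilon$ and $\delta_2+\epsilon$ and the Verma $M_{-a,-a\mid -a}$ occurs with multiplicity $2$. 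Only one of the two copies is the even reflection $s_{2\delta_1}\lambda$ visible through \cref{filprop}; the second is produced by the translation functor and must be extracted from $\mathrm{pr}_\lambda(P_\mu\otimes W)$, after which one must prove that this projection is itself indecomposable (hence equal to $P_\lambda$) rather than splitting off a smaller projective.

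I expect this last indecomposability step to be the main obstacle, precisely as anticipated in \cref{strategy}. Whenever the projection carries more Verma subquotients than \cref{filprop} strictly forces --- most acutely in the $a=-b$ and $a=-\tfrac12$ cases --- the term count no longer determines $P_\lambda$ uniquely and one is left with two candidate filtrations. To adjudicate I would combine \cref{corlen} (an atypical projective has a Verma flag of length greater than one) with the ``not a projective'' argument: one candidate, were it projective, would contradict BGG reciprocity \eqref{BGGrecip} against a composition multiplicity $[M_\mu:L_\lambda]$ already determined elsewhere, and is therefore discarded. Carrying out these consistency checks for the multiplicity-$2$ weight in part~(3) is where the bulk of the careful work lies.
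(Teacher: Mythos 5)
Your overall skeleton---translation functors, Lemma~\ref{lowest}, Proposition~\ref{filprop}, term counting---is indeed the paper's, and the generic cases (all of part (1), and the interior weights of part (2)) go through exactly as you say, as ``standard cases.'' The genuine gap is in your resolution of indecomposability. You anticipate two-candidate-filtration dilemmas in part (3) and at $a=-\tfrac12$, and propose to discard the short candidate because it ``would contradict BGG reciprocity \eqref{BGGrecip} against a composition multiplicity $[M_\mu:L_\lambda]$ already determined elsewhere.'' In this paper no such multiplicity is independently available: the Jordan--H\"older data of Section~\ref{sec7} are \emph{derived} from these very Verma-flag formulae via BGG reciprocity, so the step as stated is circular. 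Moreover, the dilemma never actually arises in this theorem under the right choices. For $\lambda=(a,-a|-a)$ with $a<-\tfrac12$, the paper projects $P_{a+1,-a|-a}\otimes V$---note that $\mu=(a+1,-a|-a)$ is \emph{atypical}, a previously settled standard case, not a typical weight as your recipe ``choose $\nu$ so that $\mu-\rho$ is typical'' requires---and after Proposition~\ref{filprop} forces one copy of every projected term, the sole leftover is the single Verma $M_{-a,-a|-a}$, which cannot constitute an indecomposable projective because its highest weight is atypical (Corollary~\ref{corlen}); hence it lies in $P_\lambda$, giving the multiplicity $2$. You mention Corollary~\ref{corlen}, but the decisive mechanism is this leftover argument, not a reciprocity contradiction; where genuine $Q$-versus-$Q+R$ dilemmas do occur in the paper (Theorems~\ref{thm:spo432} and \ref{thm:spo434}, not here), they are settled by translating the short candidate a second time and showing the resulting projection cannot be decomposed into indecomposable projectives---again a term count, not reciprocity.

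Two further deviations at the boundary would need repair. First, for $a=-\tfrac12$ the paper stays with $W=V$, translating from previously established atypical projectives: $P_{\frac12,b|-\frac12}\otimes V$ for $\lambda=(-\tfrac12,b|\tfrac12)$, resolved by noting the leftover flag of the would-be extra summand is not fully contained in the projection; $P_{-\frac12,b|\frac12}\otimes V$ for $\lambda=(-\tfrac12,b|-\tfrac12)$, resolved by multiplicity counting (two copies of $P_\lambda$ are forced, so the four multiplicity-one leftovers must assemble into $P_{\frac12,b|\frac12}$); and $P_{\frac12,\frac12|\pm\frac12}\otimes V$ for part (3). Your proposal to use $S^2V$ or $\g$ from a typical $\mu$ is not shown to close: a regular typical $P_\mu$ already has a $16$-term flag, and tensoring with a $24$- or $25$-dimensional module yields a projection with many extraneous linked terms, with no guarantee that $M_\lambda$ remains the lowest weight or that the leftovers can be sorted. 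Second, the remark that this region is ``carried by the Weyl reflection exchanging $\delta_1$ and $\delta_2$'' to Theorem~\ref{thm:spo432} is only a heuristic: no functor on a block implements that reflection, and $P_\lambda$ and $P_{w\lambda}$ have genuinely different flags (compare the four-term formulae there with the $\sum$-formulae here), so this analogy cannot substitute for the computation.
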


\begin{thm}\label{thm:spo434}
    Let $\lambda-\rho = (a,b \ | \ c)-\rho$ be an atypical weight with $a,b,c \in \frac{1}{2} + \mathbb{Z}$, $a,b<0$, and $c \in \{\pm a, \pm b\}$. The projective covers $P_{\lambda}$ have the following Verma flag formulae.
    
    \begin{enumerate}[label=(\arabic*), ref=\arabic*]        
        \item Suppose that $a<b<0$.
        \begin{enumerate}[label=(\theenumi.\arabic*), ref=\arabic*]     
            \item When $c=-a$, we have
            \[P_{a,b|-a} = \sum M_{a,b|-a} + \sum M_{a+1,b|-a-1}.\]
            
            \item When $c=a$, we have
            \[P_{a,b|a} = \sum M_{a,b|a} + \sum M_{a+1,b|a+1}.\]
            
            \item When $c=-b$, we have
            \[P_{a,b|-b} = \sum M_{a,b|-b} + \sum M_{a,b+1|-b-1}\]
            for $b<-\frac{1}{2}$, and
            \begin{align*}
                P_{a,-\frac{1}{2}|\frac{1}{2}}
                &= \sum M_{a,-\frac{1}{2}|\frac{1}{2}} + \sum M_{-\frac{1}{2}, -a|\frac{1}{2}} \\
                &+ M_{a,\frac{1}{2}|-\frac{1}{2}} + M_{-\frac{1}{2},-a|-\frac{1}{2}} + M_{\frac{1}{2},a|-\frac{1}{2}} + M_{\frac{1}{2},-a|-\frac{1}{2}} \\
                &+ M_{-a,-\frac{1}{2}|-\frac{1}{2}} + M_{-a,\frac{1}{2}|-\frac{1}{2}} + \sum M_{a,\frac{3}{2}|\frac{3}{2}}.
            \end{align*}
            
            \item When $c=b$, we have
            \[P_{a,b|b} = \sum M_{a,b|b} + \sum M_{a,b+1|b+1}\]
            for $b<-\frac{1}{2}$, and
            \[P_{a,-\frac{1}{2}|-\frac{1}{2}} = \sum M_{a,-\frac{1}{2}|-\frac{1}{2}}.\]
        \end{enumerate}

        \item Suppose that $b<a<0$.
        \begin{enumerate}[label=(\theenumi.\arabic*), ref=\arabic*]     
        \item When $c=-a$, we have
        \[P_{a,b|-a} = \sum M_{a,b|-a} + \sum M_{a+1,b|-a-1}\]
        for $a<-\frac{1}{2}$, and 
        \begin{align*}
            P_{-\frac{1}{2},b|\frac{1}{2}}
            &= \sum M_{-\frac{1}{2},b|\frac{1}{2}} + M_{-b,-\frac{1}{2}|\frac{1}{2}} + M_{-b,\frac{1}{2}|\frac{1}{2}} \\
            &+ M_{\frac{1}{2},b|-\frac{1}{2}} + M_{\frac{1}{2},-b|-\frac{1}{2}} + M_{-b, -\frac{1}{2}|-\frac{1}{2}} + M_{-b, \frac{1}{2}|-\frac{1}{2}} \\
            &+ \sum M_{\frac{3}{2},b|\frac{3}{2}}.
        \end{align*}
            
        \item When $c=a$, we have
        \[P_{a,b|a} = \sum M_{a,b|a} + \sum M_{a+1,b|a+1}\]
        for $a<-\frac{1}{2}$, and 
        \[P_{-\frac{1}{2},b|-\frac{1}{2}} = \sum M_{-\frac{1}{2},b|-\frac{1}{2}}.\]
        
        \item When $c=-b$, we have
        \[P_{a,b|-b} = \sum M_{a,b|-b} + \sum M_{a,b+1|-b-1}\]
        for $b<a-1$, and
        \begin{align*}
            P_{a,a-1|-a+1} 
            &= \sum M_{a,a-1|-a+1} + \sum M_{a+1,a|-a-1} \\
            &+ \sum M_{a,a|-a} + \sum M_{-a,a|-a}
        \end{align*}
        for $a<-\frac{1}{2}$, and
        \begin{align*}
            P_{-\frac{1}{2},-\frac{3}{2}|\frac{3}{2}} 
            &= \sum M_{-\frac{1}{2},-\frac{3}{2}|\frac{3}{2}} + M_{\frac{3}{2},-\frac{1}{2}|\frac{3}{2}} + M_{\frac{3}{2},\frac{1}{2}|\frac{3}{2}} \\
            &+ \sum M_{-\frac{1}{2},-\frac{1}{2}|\frac{1}{2}} + \sum M_{\frac{1}{2},-\frac{1}{2}|-\frac{1}{2}}.
        \end{align*}
        
        \item When $c=b$, we have
        \[P_{a,b|b} = \sum M_{a,b|b} + \sum M_{a,b+1|b+1}\]
        for $b<a-1$, and
        \begin{align*}
            P_{a,a-1|a-1} 
            &= \sum M_{a,a-1|a-1} + \sum M_{a+1,a|a+1} \\
            &+ \sum M_{a,a|a} + \sum M_{-a,a|a}
        \end{align*}
        for $a<-\frac{1}{2}$, and
        \[P_{-\frac{1}{2},-\frac{3}{2}|-\frac{3}{2}} = \sum M_{-\frac{1}{2},-\frac{3}{2}|-\frac{3}{2}} + \sum M_{-\frac{1}{2},-\frac{1}{2}|-\frac{1}{2}} + \sum M_{\frac{1}{2},-\frac{1}{2}|-\frac{1}{2}}.\]
        \end{enumerate}

        \item Suppose that $a=b<0$.
        \begin{enumerate}[label=(\theenumi.\arabic*), ref=\arabic*]     
        \item When $c=-a$, we have
        \[P_{a,a|-a} = \sum M_{a,a|-a} + \sum M_{a,a+1|-a-1}\]
        for $a<-\frac{1}{2}$, and
        \begin{align*}
            P_{-\frac{1}{2},-\frac{1}{2}|\frac{1}{2}}
            &= \sum M_{-\frac{1}{2},-\frac{1}{2}|\frac{1}{2}} + M_{\frac{1}{2},\frac{1}{2}|\frac{1}{2}} \\
            &+ M_{-\frac{1}{2},\frac{1}{2}|-\frac{1}{2}} + M_{\frac{1}{2},-\frac{1}{2}|-\frac{1}{2}} + M_{\frac{1}{2},\frac{1}{2}|-\frac{1}{2}} \\
            &+ \sum M_{-\frac{1}{2},\frac{3}{2}|\frac{3}{2}}.
        \end{align*}
        
        \item When $c=a$, we have
        \[P_{a,a|a} = \sum M_{a,a|a} + \sum M_{a,a+1|a+1}\]
        for $a<-\frac{1}{2}$, and
        \[P_{-\frac{1}{2},-\frac{1}{2}|-\frac{1}{2}} = \sum M_{-\frac{1}{2},-\frac{1}{2}|-\frac{1}{2}}.\]
        \end{enumerate}
    \end{enumerate}
\end{thm}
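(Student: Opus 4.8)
The plan is to run, for each sign-region of the statement, the translation-functor procedure of \cref{strategy}. For a target $\lambda=(a,b\,|\,c)$ with $a,b<0$ and $c\in\{\pm a,\pm b\}$, I choose $\mu\in X+\rho$ with $\mu-\rho$ typical and $\lambda-\mu\in\mathrm{wt}(W)$ for one of $W\in\{V,S^2V,\mathfrak g\}$; most often $W=V$ and $\mu=\lambda\mp\epsilon$, which crosses the isotropic wall on the $\epsilon$-axis responsible for the atypicality. The three modules supply, respectively, the single shifts $\pm\delta_i,\pm\epsilon$, the double shifts $2\epsilon,\epsilon\pm\delta_i,\delta_i\pm\delta_j$, and the root shifts $2\delta_i,\delta_i\pm\epsilon$, so whichever isotropic translate the answer needs is available. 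In this $a,b<0$ regime $\mu$ can most often be taken antidominant, making the Verma flag of $P_\mu$ a full Weyl-coset sum by Lemma~\ref{typAndsing} (or Lemma~\ref{typAndreg} when $\mu-\rho$ is dot-regular); this is precisely the source of the orbit-sum notation $\sum M$ in which the answers are packaged, since tensoring transports an entire packet at once.

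The backbone is the genuinely antidominant displayed weights, those with $a\le b$ and $c<0$, namely (1.2), (1.4), and (3.2). For these I expand the flag of $P_\mu\otimes W$ by Proposition~\ref{sum} as the multiset $\{M_{\tau\mu+\eta}\}$, with $\tau$ over the coset set of $\mu$ and $\eta$ over $\mathrm{wt}(W)$ counted with multiplicity $\dim W^\eta$, then discard the Vermas not linked to $\lambda-\rho$ to form $\mathrm{pr}_\lambda(P_\mu\otimes W)$. Since $\lambda-\rho$ is antidominant it is the lowest weight present, so Lemma~\ref{lowest} exhibits $P_\lambda$ as a direct summand and the two-packet shape $\sum M_\lambda+\sum M_{\lambda'}$ is read off.

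To promote ``$P_\lambda$ is a summand'' to ``$P_\lambda$ is the whole projection'', I force Verma terms from below with Proposition~\ref{filprop}: parts~\ref{3}--\ref{6} yield the isotropic translates $M_{\lambda+\beta}$ and $M_{\lambda+\beta+\gamma}$ and the even-reflection images completing each packet, while parts~\ref{1}--\ref{2} yield the remaining Weyl translates. Because $P_\lambda$'s flag is contained in the projection multiset, whenever the forced terms saturate that multiset the two coincide and no second summand can survive. The non-antidominant displayed weights — all of case (2), where $a>b$ forces $\langle\lambda,(\delta_1-\delta_2)^\vee\rangle>0$, and the $c>0$ cases (1.1), (1.3), (3.1) — require more care, since $M_\lambda$ need not be globally lowest; there I either choose $\mu$ close enough above $\lambda$ that $\lambda-\rho$ is the lowest weight of the particular projection (so Lemma~\ref{lowest} still applies), or decompose the projection as $P_\lambda$ plus indecomposable projectives already determined in the antidominant step and isolate $P_\lambda$ by subtraction. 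In either case BGG reciprocity~\eqref{BGGrecip} and Corollary~\ref{corlen} forbid a leftover summand from being a single Verma.

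The main obstacle is the degenerate boundary stratum: $b=-\tfrac12$ in (1.3)/(1.4), $a=-\tfrac12$ throughout case (2), the sub-wall $b=a-1$ in (2.3)/(2.4), and all of the $\mathfrak{sp}(4)$-singular case (3) where $a=b$, culminating in the doubly degenerate $(-\tfrac12,-\tfrac12\,|\,\tfrac12)$. There $\mu-\rho$ becomes dot-singular, an $\mathfrak{sp}(4)$- or $\mathfrak{so}(3)$-wall is met, and the clean two-packet pattern collapses into the long explicit flags recorded in the statement, so the projection can admit two a priori consistent standard filtrations that Lemma~\ref{lowest} and Proposition~\ref{filprop} do not separate. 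Following \cref{strategy}, my plan for these is to display both candidates and eliminate the spurious one by showing it is not projective — typically because removing it would leave a complement whose forced Verma content, recomputed from Proposition~\ref{filprop}, contradicts BGG reciprocity~\eqref{BGGrecip} or Corollary~\ref{corlen}. Carrying out this indecomposability check uniformly across the degenerate weights, rather than the otherwise routine root bookkeeping, is where the genuine difficulty lies.
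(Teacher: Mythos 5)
Your overall framework matches the paper's: translation by $V$ and $S^2V$ from a typical (often singular) $\mu$, Lemma~\ref{lowest} to split off $P_\lambda$, Proposition~\ref{filprop} to force terms, and a two-candidate analysis at the degenerate strata. But there is a genuine gap precisely at the step you defer: your proposed mechanism for killing the spurious candidate --- ``its forced Verma content, recomputed from Proposition~\ref{filprop}, contradicts BGG reciprocity~\eqref{BGGrecip} or Corollary~\ref{corlen}'' --- does not work in the hard cases, because there \emph{both} candidates are fully consistent with Proposition~\ref{filprop}, \eqref{BGGrecip}, and Corollary~\ref{corlen} inside the projection at hand: the ambiguous complement $R$ is each time the already-known flag of a legitimate indecomposable projective (e.g.\ $P_{\frac12,\frac32|\frac12}$ in the analysis of $P_{-\frac32,-\frac12|\frac12}$), not a single atypical Verma. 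The paper resolves this by a \emph{second} round of translation: it tensors the candidates $Q$ and $R$ themselves with $V$ (or $\g$, or uses a large $S^2V$ projection), projects into an adjacent block, and repeatedly extracts forced indecomposable projectives via Lemma~\ref{lowest} until $Q$ is shown to have too few terms to be projective (Tables~\ref{tab:pr-311}, \ref{tab:-3-11}, \ref{tab:-1-11}, \ref{tab:-1-51}), with an induction on $a$ or $b$ whose base case is certified by exactly such a computation. Nothing in your proposal supplies this iterated two-block bookkeeping, and no single-projection argument can, since the data you allow yourself do not distinguish $Q\oplus R$ indecomposably glued from $Q\oplus R$ split.

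Worse, your elimination template is structurally one-sided: ``show the spurious filtration is not projective'' can only rule out the \emph{shorter} candidate. In the boundary family $P_{-\frac12,b|-\frac12}$ (and likewise $P_{a,-\frac12|-\frac12}$, $P_{-\frac12,-\frac12|-\frac12}$) the \emph{short} candidate $\sum M_{-\frac12,b|-\frac12}$ is the correct answer, so the spurious object is the long candidate $Q+R$ --- which is the flag of the genuinely projective (but decomposable) module $Q\oplus R$ and therefore cannot be refuted as ``not projective.'' The paper instead proves the splitting by multiplicity counting: for $P_{-\frac12,-\frac12|-\frac12}$ the lowest term occurs with multiplicity $3$ in $\mathrm{pr}_\lambda(P_{-\frac12,-\frac12|\frac12}\otimes V)$, forcing three copies of $P_\lambda$ and leaving only the multiplicity-$\geq 3$ terms; and for $\mu=(-\frac12,-\frac52|-\frac12)$ it computes the $180$-term projection $\mathrm{pr}_\mu(P_{-\frac32,-\frac52|-\frac32}\otimes S^2V)$, shows $P_\mu$ must occur four times among the $60$ residual terms, and notes $4\times 16=64>60$ rules out the $16$-term long candidate, together with an ``iff'' linkage transporting the dichotomy along the induction on $b$. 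You would need to add both devices --- candidate-by-candidate retranslation into adjacent blocks with Lemma~\ref{lowest} extraction, and multiplicity-counting across a projection --- for the proof to close; as written, the plan fails exactly on cases (1.4), (2.2), and (3.2) at the wall and on the two-candidate cases (1.3), (2.1), (3.1).
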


\subsection{Proof}
In this subsection, we prove Theorems \ref{thm:spo431} through \ref{thm:spo434}. We use the method of translation functors by effecting certain finite-dimensional representations. These representations, which are all irreducible, highest-weight, and self-dual (cf. \cite{cheng2012dualities}), and their weights are given in Table~\ref{tab:repsandweights}. All weights, except the zero weight, appear with multiplicity $1$. The zero weight is stated with its total multiplicity (i.e. $3 \cdot 0$ means the zero-weight space is three-dimensional).

\begin{table}[hp]
    \centering
    \[\begin{array}{c | c | c | c} 
        \toprule
        %\text{Representation} & 
        \text{Representation} & \text{Weights} & \text{Dimension} & \text{Highest Weight} \\
        \midrule
        
        %\text{Natural} & 
        V & \pm \{\delta_1, \delta_2, \epsilon\} \cup \{0\} & 7 & \delta_1 \\
        
        \midrule
        %\multirow{2}{*}{Symmetric-squared of the natural} & 
        \multirow{2}{*}{$S^2 V$} & \pm \{\delta_1 \pm \delta_2, \delta_1 \pm \epsilon, \delta_1, \delta_2 \pm \epsilon, \delta_2, 2\epsilon, \epsilon\} & \multirow{2}{*}{24} & \multirow{2}{*}{$\delta_1 + \delta_2$} \\
         &  \cup \{ 4 \cdot 0\} & \\
         
        \midrule 
        %\multirow{2}{*}{Adjoint} &
        \multirow{2}{*}{$\mathfrak{g}$} & \pm \{ 2\delta_1, \delta_1 \pm \delta_2,  \delta_1 \pm \epsilon, \delta_1, 2\delta_2,  \delta_2 \pm \epsilon, \delta_2, \epsilon
         \} & \multirow{2}{*}{25} & \multirow{2}{*}{$2\delta_1$} \\
         %& 
         &  \cup \{3 \cdot 0\} &  \\
        \bottomrule
    \end{array}\]
    \caption{Representations and Weights}
    \label{tab:repsandweights}
\end{table}

In particular, we have that as $\osp(3|4)$-modules, $V \cong L_{3/2, -1/2 | 1/2} = L_{\delta_1 + \rho}$, $S^2 V \cong L_{3/2, 1/2 | 1/2} = L_{\delta_1 + \delta_2 + \rho}$, and $\g \cong L_{5/2, -1/2 | 1/2} = L_{2\delta_1 + \rho}$.
 
We now offer justification for the formulae above, separated into cases that have different formulae, based on the strategy in \cref{strategy}. Our proof will be more explicit in cases which require more sophisticated techniques; those which lack much explanation follow the strategy almost directly and list only the choices of $P_{\mu}$ and representation for translation functor. In particular, in the proofs we shall skip the following standard cases:
\begin{enumerate}
    \item When $\lambda=(a,b|\pm a)$, $\mu=(a+1,b|\pm a)$, $a \neq b$, and $P_{\lambda}=\mathrm{pr}_{\lambda}\left(P_{\mu} \otimes V\right)$ by Lemma~\ref{lowest} and Proposition~\ref{filprop} directly.
            
    \item When $\lambda=(a,b|\pm b)$, $\mu=(a,b+1|\pm b)$, $a \neq b$, and $P_{\lambda}=\mathrm{pr}_{\lambda}\left(P_{\mu} \otimes V\right)$ by Lemma~\ref{lowest} and Proposition~\ref{filprop} directly.
\end{enumerate}

\begin{proof}[Proof of Theorem~\ref{thm:spo431}]
Let $\lambda-\rho = (a,b \ | \ c)-\rho$ be an atypical weight with $a,b,c \in \frac{1}{2} + \mathbb{Z}$ and $a,b>0$.
    
    \begin{enumerate}[label=(\arabic*), ref=\arabic*]
        \item When $\lambda=(a,a-1|\pm (a-1))$, let $\mu=(a+1,a-1|\pm (a-1))$. The projective $P_{\mu}$ is a standard case. We use $V$ for translation functor. 
        
        \item When $\lambda=(a,a|\pm a)$, let $\mu=(a,a|\pm (a+1))$. We use $V$ for translation functor. 
    \end{enumerate}
\end{proof}

\begin{proof}[Proof of Theorem~\ref{thm:spo432}]
Let $\lambda-\rho = (a,b \ | \ c)-\rho$ be an atypical weight with $a,b,c \in \frac{1}{2} + \mathbb{Z}$ and $a>0>b$.

    \begin{enumerate}[label=(\arabic*), ref=\arabic*]
        \item When $\lambda=(a,-\frac{1}{2}|\frac{1}{2})$:
        \begin{enumerate}[label=(\theenumi.\arabic*), ref=\arabic*]
            \item If $a>\frac{3}{2}$,
            \[\mathrm{pr}_{\lambda}\left(P_{a,\frac{1}{2}|\frac{1}{2}} \otimes V\right) = M_{a,-\frac{1}{2}|\frac{1}{2}} + M_{a,\frac{1}{2}|\frac{1}{2}} + M_{a,\frac{1}{2}|-\frac{1}{2}} + M_{a,\frac{3}{2}|\frac{3}{2}}.\]
            By Lemma~\ref{lowest}, $P_{\lambda}$ must appear in the projection as a direct summand, and Proposition~\ref{filprop} ensures that the first three terms appear in $P_{\lambda}$. However, since $M_{a,\frac{3}{2}|\frac{3}{2}}$ does not form a projective on its own, it must also belong to $P_{\lambda}$.
            
            \item If $a=\frac{3}{2}$,
            \begin{align*}
                \mathrm{pr}_{\lambda}\left(P_{\frac{3}{2},\frac{1}{2}|\frac{1}{2}} \otimes V\right) 
                &= M_{\frac{3}{2},-\frac{1}{2}|\frac{1}{2}} 
                + M_{\frac{3}{2},\frac{1}{2}|\frac{1}{2}} 
                + M_{\frac{3}{2},\frac{1}{2}|-\frac{1}{2}} 
                + M_{\frac{3}{2},\frac{3}{2}|\frac{3}{2}} \\
                &+ M_{\frac{5}{2},\frac{3}{2}|\frac{5}{2}}.
            \end{align*}
            By Lemma~\ref{lowest} and Proposition~\ref{filprop}, the first three terms appear in $P_{\lambda}$. Since the standard filtration of $P_{\frac{3}{2},\frac{3}{2}|\frac{3}{2}}$ does not appear in the projection, $M_{\frac{3}{2},\frac{3}{2}|\frac{3}{2}}$ must belong to $P_{\lambda}$. Similarly, $M_{\frac{5}{2},\frac{3}{2}|\frac{5}{2}}$ must belong to $P_{\lambda}$.
        \end{enumerate}
            
        \item When $\lambda=(a,-\frac{1}{2}|-\frac{1}{2})$:
        \begin{enumerate}[label=(\theenumi.\arabic*), ref=\arabic*]
            \item If $a>\frac{3}{2}$,
            \begin{align*}
                \mathrm{pr}_{\lambda}\left(P_{a,-\frac{1}{2}|\frac{1}{2}} \otimes V\right)
                &= 2M_{a,-\frac{1}{2}|-\frac{1}{2}} + 2M_{a,-\frac{1}{2}|\frac{1}{2}} + 2M_{a,\frac{1}{2}|-\frac{1}{2}} + 3M_{a,\frac{1}{2}|\frac{1}{2}}\\
                &+ M_{a,\frac{3}{2}|\frac{3}{2}}.
            \end{align*}
            By Lemma~\ref{lowest}, $P_{\lambda}$ must appear twice in the projection as a direct summand, and By Proposition~\ref{filprop}, one copy of each of the first four terms must be in $P_{\lambda}$. Now, one copy of the fourth term and the last term remain. However, since only one copy of these two terms  remains, they cannot appear in $P_{\lambda}$. Thus, we get that 
            \[\mathrm{pr}_{\lambda}\left(P_{a,-\frac{1}{2}|\frac{1}{2}} \otimes V\right) = 2P_{a,-\frac{1}{2}|-\frac{1}{2}} + P_{a,\frac{1}{2}|\frac{1}{2}},\] and
            \[P_{a,-\frac{1}{2}|-\frac{1}{2}} = M_{a,-\frac{1}{2}|-\frac{1}{2}} + M_{a,-\frac{1}{2}|\frac{1}{2}} + M_{a,\frac{1}{2}|-\frac{1}{2}} + M_{a,\frac{1}{2}|\frac{1}{2}}.\]
                
            \item If $b=-\frac{1}{2}$ and $a=\frac{3}{2}$,
            we get a formula consistent with the previous case by applying the same method.
        \end{enumerate}
        
        \item When $\lambda=(a,-a|a)$: 
        \begin{enumerate}[label=(\theenumi.\arabic*), ref=\arabic*] 
            \item If $a>\frac{1}{2}$, let $\mu=(a+1,-a|a)$. The projective $P_{\mu}$ is a standard case. We use $V$ for translation functor. 
            
            \item If $a=\frac{1}{2}$,
            \begin{align*}
                \mathrm{pr}_{\lambda}\left(P_{\frac{3}{2},-\frac{1}{2}|\frac{1}{2}} \otimes V\right)
                &= M_{\frac{1}{2},-\frac{1}{2}|\frac{1}{2}} 
                + M_{\frac{1}{2},\frac{1}{2}|\frac{1}{2}} 
                + M_{\frac{1}{2},\frac{1}{2}|-\frac{1}{2}} 
                + M_{\frac{1}{2},\frac{3}{2}|\frac{3}{2}} \\
                &+ M_{\frac{3}{2},-\frac{1}{2}|\frac{3}{2}}
                + M_{\frac{3}{2},\frac{1}{2}|-\frac{3}{2}}
                + 2M_{\frac{3}{2},\frac{1}{2}|\frac{3}{2}}
                + M_{\frac{5}{2},\frac{1}{2}|\frac{5}{2}}.
            \end{align*}
            By Lemma~\ref{lowest}, $P_{\lambda}$ must appear in the projection. By Proposition~\ref{filprop}, the first six term and one copy of $M_{\frac{3}{2},\frac{1}{2}|\frac{3}{2}}$ must appear in $P_{\lambda}$. However, we run into a problem here: the two remaining terms, $M_{\frac{3}{2},\frac{1}{2}|\frac{3}{2}}$, $M_{\frac{5}{2},\frac{1}{2}|\frac{5}{2}}$ could actually form the projective $P_{\frac{3}{2},\frac{1}{2}|\frac{3}{2}}$. This means that we have to devise a different method to show they are also included in $P_{\lambda}$. 
                
            We have two possible standard filtrations of $P_{\lambda}$. Call the shorter one, which do not include the two unexplained terms, $Q$, and call $P_{\frac{3}{2},\frac{1}{2}|\frac{3}{2}}=M_{\frac{3}{2},\frac{1}{2}|\frac{3}{2}} + M_{\frac{5}{2},\frac{1}{2}|\frac{5}{2}}$ R. We shall show that $P_{\lambda}$ has the longer standard filtration, which we shall denote by abuse of notation $Q+R$, by proving that $Q$ is not a projective. We calculate the projections $\mathrm{pr}_{\mu}\left(Q \otimes \g\right)$ and $\mathrm{pr}_{\mu}\left(R \otimes \g\right)$ in Table~\ref{tab:1-11}.
                
            \begin{table}[hp]
                \centering
                \[\begin{array}{c | l | l l l}
                    \toprule 
                    \text{Projective} & \text{Terms} & \multicolumn{3}{c}{\mathrm{pr}_{\mu}(-\otimes \g)}\\
                    \midrule
                    \multirow{7}{*}{$Q$} 
                    & M_{\frac{1}{2},-\frac{1}{2}|\frac{1}{2}} & \color{red} M_{\frac{1}{2},-\frac{5}{2}|\frac{1}{2}} & \color{red} M_{\frac{5}{2},-\frac{1}{2}|\frac{1}{2}} & \\
                    & M_{\frac{1}{2},\frac{1}{2}|\frac{1}{2}} & \color{red} M_{\frac{1}{2},\frac{5}{2}|\frac{1}{2}} & \color{red} M_{\frac{5}{2},\frac{1}{2}|\frac{1}{2}} & \\
                    & M_{\frac{1}{2},\frac{1}{2}|-\frac{1}{2}} & \color{blue} M_{\frac{1}{2},\frac{5}{2}|-\frac{1}{2}} & \color{blue} M_{\frac{5}{2},\frac{1}{2}|-\frac{1}{2}} & \\
                    & M_{\frac{1}{2},\frac{3}{2}|\frac{3}{2}} & \color{blue} M_{\frac{1}{2},\frac{5}{2}|\frac{1}{2}} & \color{red} M_{\frac{3}{2},\frac{5}{2}|\frac{3}{2}} & \color{red} M_{\frac{5}{2},\frac{3}{2}|\frac{3}{2}} \\
                    & M_{\frac{3}{2},-\frac{1}{2}|\frac{3}{2}} & \color{red} M_{\frac{3}{2},-\frac{5}{2}|\frac{3}{2}} & \color{red} M_{\frac{5}{2},-\frac{3}{2}|\frac{3}{2}} & \color{violet} M_{\frac{5}{2},-\frac{1}{2}|\frac{1}{2}} \\
                    & M_{\frac{3}{2},\frac{1}{2}|-\frac{3}{2}} & \color{blue} M_{\frac{3}{2},\frac{5}{2}|-\frac{3}{2}} & \color{blue} M_{\frac{5}{2},\frac{3}{2}|-\frac{3}{2}} & \color{violet} M_{\frac{5}{2},\frac{1}{2}|-\frac{1}{2}} \\
                    & M_{\frac{3}{2},\frac{1}{2}|\frac{3}{2}} & \color{blue} M_{\frac{3}{2},\frac{5}{2}|\frac{3}{2}} & \color{blue} M_{\frac{5}{2},\frac{3}{2}|\frac{3}{2}} & \color{blue} M_{\frac{5}{2},\frac{1}{2}|\frac{1}{2}} \\
                    \midrule
                    \multirow{2}{*}{$R$} 
                    & M_{\frac{3}{2},\frac{1}{2}|\frac{3}{2}} & \color{brown} M_{\frac{3}{2},\frac{5}{2}|\frac{3}{2}} & \color{brown} M_{\frac{5}{2},\frac{3}{2}|\frac{3}{2}} & \color{violet} M_{\frac{5}{2},\frac{1}{2}|\frac{1}{2}} \\
                    & M_{\frac{5}{2},\frac{1}{2}|\frac{5}{2}} & \color{violet} M_{\frac{5}{2},\frac{3}{2}|\frac{3}{2}} & \color{brown} M_{\frac{5}{2},\frac{5}{2}|\frac{5}{2}} & \\
                    \bottomrule
                \end{array}\]
                \caption{Calculation of $P_{\frac{1}{2},-\frac{1}{2}|\frac{1}{2}}$}
                \label{tab:1-11}
            \end{table}
                
            If $Q$ were a projective, then $\mathrm{pr}_{\mu}\left(Q \otimes \g\right)$ is again a projective and by Lemma~\ref{lowest} must split into indecomposable projectives. We see that the lowest weight appearing is $\left(\frac{1}{2},-\frac{5}{2}|\frac{1}{2}\right)$, so $P_{\frac{1}{2},-\frac{5}{2}|\frac{1}{2}}$ must appear, and its terms are colored red. Next, we must have $P_{\frac{1}{2},\frac{5}{2}|-\frac{1}{2}}$ appear, whose terms are colored blue. Then, as $\left(\frac{5}{2},-\frac{1}{2}|\frac{1}{2}\right)$ is the next lowest weight, $P_{\frac{5}{2},-\frac{1}{2}|\frac{1}{2}}$ must appear (colored violet). However, we see that there are not enough terms left in $\mathrm{pr}_{\mu}(Q \otimes \g)$. Thus, $Q$ is not a projective and we must have $P_{\lambda} = Q + R$. It turns out that 
            \[\mathrm{pr}_{\mu}\left((Q+R) \otimes \g\right) = P_{\frac{1}{2},-\frac{5}{2}|\frac{1}{2}} + P_{\frac{1}{2},\frac{5}{2}|-\frac{1}{2}} + P_{\frac{5}{2},-\frac{1}{2}|\frac{1}{2}} + P_{\frac{3}{2},\frac{5}{2}|\frac{3}{2}}.\]
        \end{enumerate}
        
        \item When $\lambda = (a,-a|-a)$, let $\mu = (a+1,-a|-a)$. We use $V$ for translation functor. 
    \end{enumerate}
\end{proof}

\begin{proof}[Proof of Theorem~\ref{thm:spo433}]
Let $\lambda-\rho = (a,b \ | \ c)-\rho$ be an atypical weight with $a,b,c \in \frac{1}{2} + \mathbb{Z}$ and $b>0>a$.

    \begin{enumerate}[label=(\arabic*), ref=\arabic*]        
        \item When $\lambda = (-\frac{1}{2},b|\frac{1}{2})$:
        \begin{enumerate}[label=(\theenumi.\arabic*), ref=\arabic*]     
            \item $b>\frac{3}{2}$,
            \begin{align*}
                \mathrm{pr}_{\lambda}\left(P_{\frac{1}{2},b|-\frac{1}{2}} \otimes V\right)
                &= \sum M_{-\frac{1}{2}, b|\frac{1}{2}}\\
                &+ M_{\frac{1}{2}, b|-\frac{1}{2}} + M_{b, \frac{1}{2}|-\frac{1}{2}} + M_{\frac{3}{2}, b|\frac{3}{2}} + M_{b, \frac{3}{2}|\frac{3}{2}}.
            \end{align*}
            By Lemma~\ref{lowest} and Proposition~\ref{filprop}, $\sum M_{-\frac{1}{2}, b|\frac{1}{2}}$ must belong to $P_{\lambda}$. The lowest remaining term is $M_{\frac{1}{2}, b|-\frac{1}{2}}$, and since the remaining terms do not contain the standard filtration of $P_{\frac{1}{2}, b|-\frac{1}{2}}$, it must also belong to $P_{\lambda}$. By the same argument, each of the remaining terms belongs to $P_{\lambda}$.
                
            \item If $b=\frac{3}{2}$,
            \begin{align*}
                \mathrm{pr}_{\lambda}\left(P_{\frac{1}{2},\frac{3}{2}|-\frac{1}{2}} \otimes V\right)
                &= \sum M_{-\frac{1}{2}, \frac{3}{2}|\frac{1}{2}}\\
                &+ M_{\frac{1}{2}, \frac{3}{2}|-\frac{1}{2}} + M_{\frac{3}{2}, \frac{1}{2}|-\frac{1}{2}} + M_{\frac{3}{2}, \frac{3}{2}|\frac{3}{2}}.
            \end{align*}
            By similar arguments as above, the projection is equal to $P_{\lambda}$.
        \end{enumerate}
            
        \item When $\lambda = (-\frac{1}{2},b|-\frac{1}{2})$:
        \begin{enumerate}[label=(\theenumi.\arabic*), ref=\arabic*]
            \item If $b>\frac{3}{2}$,
            \begin{align*}
                \mathrm{pr}_{\lambda}\left(P_{-\frac{1}{2},b|\frac{1}{2}} \otimes V\right) 
                &= 2 \sum M_{-\frac{1}{2}, b|-\frac{1}{2}} \\
                &+ M_{\frac{1}{2}, b|\frac{1}{2}} + M_{b, \frac{1}{2}|\frac{1}{2}} + M_{\frac{3}{2}, b|\frac{3}{2}} + M_{b, \frac{3}{2}|\frac{3}{2}}.
            \end{align*}
            By Lemma~\ref{lowest}, two copies of $P_{\lambda}$ must appear in the projection. By Proposition~\ref{filprop}, $\sum M_{-\frac{1}{2}, b|-\frac{1}{2}}$ belongs to $P_{\lambda}$. Now, since the remaining four terms each only appear with multiplicity one, they cannot belong to $P_{\lambda}$. They form $P_{\frac{1}{2}, b|\frac{1}{2}}$.
                
            \item If $b=\frac{3}{2}$, we get the same result with the same projection as above, except that the projection has three (instead of four) remaining terms after subtracting two copies of $\sum M_{-\frac{1}{2}, b|-\frac{1}{2}}$. The three terms still form $P_{\frac{1}{2}, b|\frac{1}{2}}$, which has three instead of four terms when $b=\frac{3}{2}$. 
        \end{enumerate}

        \item When $\lambda = (a,-a|-a)$:
        \begin{enumerate}[label=(\theenumi.\arabic*), ref=\arabic*]     
            \item If $a<-\frac{1}{2}$,
            \begin{align*}
                \mathrm{pr}_{\lambda}\left(P_{a+1,-a|-a} \otimes V\right)
                &= \sum M_{a,-a|-a} + M_{-a,-a|-a} \\
                &+ \sum M_{a,-a+1|-a+1} + \sum M_{a+1,-a|-a-1}.
            \end{align*}
            By Lemma~\ref{lowest} and Proposition~\ref{filprop}, one copy of each term must appear in $P_{\lambda}$. Now, there remains only the second copy of the term $M_{-a,-a|-a}$, and as it clearly cannot form a projective, it must also belong to $P_{\lambda}$.
                
            \item If $a=-\frac{1}{2}$,
            \begin{align*}
                \mathrm{pr}_{\lambda}\left(P_{\frac{1}{2},\frac{1}{2}|\frac{1}{2}} \otimes V\right)
                &=\sum M_{-\frac{1}{2},\frac{1}{2}|\frac{1}{2}} + M_{\frac{1}{2},\frac{1}{2}|-\frac{1}{2}} + \sum M_{-\frac{1}{2},\frac{3}{2}|\frac{3}{2}}.
            \end{align*}
            By Lemma~\ref{lowest} and Proposition~\ref{filprop}, every term except for $M_{\frac{1}{2},\frac{1}{2}|-\frac{1}{2}}$ must appear in $P_{\lambda}$. As the one remaining term cannot form a projective, it must also belong to $P_{\lambda}$. Notice that unlike the previous term, each term here only appears with multiplicity one. 
        \end{enumerate}
            
        \item When $\lambda = (a,-a|a)$:
        \begin{enumerate}[label=(\theenumi.\arabic*), ref=\arabic*]
            \item If $a<-\frac{1}{2}$,
            \begin{align*}
                \mathrm{pr}_{\lambda}\left(P_{a+1,-a|a} \otimes V\right)
                &= \sum M_{a,-a|a} + M_{-a,-a|a} + M_{-a,-a|-a} \\
                &+ \sum M_{a,-a+1|a-1} + \sum M_{a+1,-a|a+1}.
            \end{align*}
            By Lemma~\ref{lowest} and Proposition~\ref{filprop}, one copy of each term must appear in $P_{\lambda}$. Now, there remain only the second copies of the terms $M_{-a,-a|a}$ and $M_{-a,-a|-a}$, and as they cannot form a projective, they must also belong to $P_{\lambda}$.
                
            \item If $a=-\frac{1}{2}$,
            \begin{align*}
                \mathrm{pr}_{\lambda}\left(P_{\frac{1}{2},\frac{1}{2}|-\frac{1}{2}} \otimes V\right)
                &= \sum M_{-\frac{1}{2},\frac{1}{2}|-\frac{1}{2}} + M_{\frac{1}{2},\frac{1}{2}|-\frac{1}{2}} + M_{\frac{1}{2},\frac{1}{2}|\frac{1}{2}} \\
                &+ \sum M_{-\frac{1}{2},\frac{3}{2}|-\frac{3}{2}}.
            \end{align*}
            By similar arguments as above, the projection is equal to $P_{\lambda}$.
        \end{enumerate}
    \end{enumerate}
\end{proof}

\begin{proof}[Proof of Theorem~\ref{thm:spo434}]
Let $\lambda-\rho = (a,b \ | \ c)-\rho$ be an atypical weight with $a,b,c \in \frac{1}{2} + \mathbb{Z}$ and $a,b<0$.

    \begin{enumerate}[label=(\arabic*), ref=\arabic*]
        \item When $\lambda = (a,-\frac{1}{2}|\frac{1}{2})$:
        \begin{enumerate}[label=(\theenumi.\arabic*), ref=\arabic*]
            \item If $a=-\frac{3}{2}$, we project $P_{-\frac{3}{2},\frac{1}{2}|\frac{1}{2}} \otimes V$ onto the $\lambda$ linkage block in Table~\ref{tab:pr-311}. Notice that in the table, to save space, we use the $\pm$ sign to combine two terms into one. For example, $M_{\frac{1}{2},\pm\frac{3}{2}|\frac{1}{2}}$ represents the two terms $M_{\frac{1}{2},\frac{3}{2}|\frac{1}{2}}$ and  $M_{\frac{1}{2},-\frac{3}{2}|\frac{1}{2}}$.
                    
            \begin{table}[hp]
                \centering
                \[\begin{array}{l | l l l l}                    \toprule
                    P_{-\frac{3}{2},\frac{1}{2}|\frac{1}{2}} & \multicolumn{4}{c}{\mathrm{pr}_{\lambda}\left(P_{-\frac{3}{2},\frac{1}{2}|\frac{1}{2}} \otimes V\right)} \\
                    \midrule
                    M_{-\frac{3}{2},\frac{1}{2}|\frac{1}{2}} & \color{red} M_{-\frac{3}{2},-\frac{1}{2}|\frac{1}{2}} & \color{red} M_{-\frac{3}{2},\frac{1}{2}|-\frac{1}{2}} & \color{red} M_{-\frac{3}{2},\frac{1}{2}|\frac{1}{2}} & \\
                            
                    M_{-\frac{1}{2},\frac{3}{2}|\frac{1}{2}} & \color{red} M_{-\frac{1}{2},\frac{3}{2}|-\frac{1}{2}} & M_{-\frac{1}{2},\frac{3}{2}|\frac{1}{2}} & \color{blue} M_{\frac{1}{2},\frac{3}{2}|\frac{1}{2}} & \\
                            
                    M_{\frac{1}{2},\pm\frac{3}{2}|\frac{1}{2}} & \color{red} M_{-\frac{1}{2},\pm\frac{3}{2}|\frac{1}{2}} & \color{red} M_{\frac{1}{2},\pm\frac{3}{2}|-\frac{1}{2}} & \color{red} M_{\frac{1}{2},\pm\frac{3}{2}|\frac{1}{2}} & \\
                            
                    M_{\frac{3}{2},\pm\frac{1}{2}|\frac{1}{2}} & \color{red} M_{\frac{3}{2},\pm\frac{1}{2}|\frac{1}{2}} & \color{blue} M_{\frac{3}{2},\frac{1}{2}|\frac{1}{2}} & M_{\frac{3}{2},-\frac{1}{2}|\frac{1}{2}} & \color{red} M_{\frac{3}{2},\pm\frac{1}{2}|-\frac{1}{2}} \\
                            
                    M_{-\frac{3}{2},\frac{3}{2}|\frac{3}{2}} & M_{-\frac{3}{2},\frac{3}{2}|\frac{3}{2}} & & & \\
                            
                    M_{\frac{3}{2},-\frac{3}{2}|\frac{3}{2}} & M_{\frac{3}{2},-\frac{3}{2}|\frac{3}{2}} & & & \\
                            
                    M_{\frac{3}{2},\frac{3}{2}|\frac{3}{2}} & \color{blue} M_{\frac{3}{2},\frac{3}{2}|\frac{3}{2}} & & & \\
                    \bottomrule
                \end{array}\]
                \caption{Calculation of $\mathrm{pr}_{\lambda}\left(P_{-\frac{3}{2},\frac{1}{2}|\frac{1}{2}} \otimes V\right)$}
                \label{tab:pr-311}
            \end{table}
                    
            We start by finding the terms that must appear in $P_{\lambda}$. By Proposition~\ref{filprop}, the terms colored red belong to $P_{\lambda}$. Now, the lowest remaining term is $M_{-\frac{3}{2},\frac{3}{2}|\frac{3}{2}}$, and since $P_{-\frac{3}{2},\frac{3}{2}|\frac{3}{2}}$ does not appear in the projection, it must belong to $P_{\lambda}$. By the same reasoning, the second copy of $M_{-\frac{1}{2},\frac{3}{2}|\frac{1}{2}}$ must also appear in $P_{\lambda}$. Now, the next lowest term is $M_{\frac{1}{2},\frac{3}{2}|\frac{1}{2}}$. However, the three terms in the standard filtration of $P_{\frac{1}{2},\frac{3}{2}|\frac{1}{2}}$ all remain in the projection, colored blue. The two remaining unsorted terms must belong to $P_{\lambda}$ as no more projective can form among them. Again, we face two possible standard filtrations of $P_{\lambda}$. Denote the one containing all red and black terms $Q$, and denote the three blue terms $R$. We shall show that $Q$ is not a projective and thereby prove that $P_{\lambda} = Q + R$. 
                    
            Let $\mu = \left( -\frac{3}{2},-\frac{1}{2}|\frac{3}{2} \right)$. We project $Q \otimes V$ and $R \otimes V$ onto the $\mu$ linkage block in Table~\ref{tab:-3-11}. In the table we use our notation of $\sum M_{\lambda}$ to combine terms.
                    
            \begin{table}[hp]
                \centering
                \[\begin{array}{c | l | l l}
                    \toprule 
                    \text{Projective} & \text{Terms} & \multicolumn{2}{c}{\mathrm{pr}_{\mu}(-\otimes V)}\\
                    \midrule
                    \multirow{9}{*}{$Q$} 
                            
                    & \multirow{2}{*}{$\sum M_{-\frac{3}{2},-\frac{1}{2}|\frac{1}{2}}$} & \color{red}\sum M_{-\frac{3}{2},-\frac{1}{2}|\frac{3}{2}} & \color{red}\sum M_{-\frac{1}{2},-\frac{1}{2}|\frac{1}{2}}\\
                    & & \color{violet} \sum M_{-\frac{1}{2},-\frac{1}{2}|\frac{1}{2}} &\\
                            
                    & M_{-\frac{3}{2},\frac{1}{2}|-\frac{1}{2}} & \color{blue} M_{-\frac{3}{2},\frac{1}{2}|-\frac{3}{2}} & \color{blue} M_{-\frac{1}{2},\frac{1}{2}|-\frac{1}{2}} \\
                            
                    & M_{-\frac{1}{2},\frac{3}{2}|-\frac{1}{2}} & \color{blue} M_{-\frac{1}{2},\frac{3}{2}|-\frac{3}{2}} & \color{violet} M_{-\frac{1}{2},\frac{1}{2}|-\frac{1}{2}} \\
                            
                    & M_{\frac{1}{2},\pm\frac{3}{2}|-\frac{1}{2}} & \color{blue} M_{\frac{1}{2},\pm\frac{3}{2}|-\frac{3}{2}} & \color{blue} M_{\frac{1}{2},\pm\frac{1}{2}|-\frac{1}{2}} \\
                            
                    & M_{\frac{3}{2},\pm\frac{1}{2}|-\frac{1}{2}} & \color{blue} M_{\frac{3}{2},\pm\frac{1}{2}|-\frac{3}{2}} & \color{violet} M_{\frac{1}{2},\pm\frac{1}{2}|-\frac{1}{2}} \\
                            
                    & M_{-\frac{1}{2},\frac{3}{2}|\frac{1}{2}} & \color{blue} M_{-\frac{1}{2},\frac{3}{2}|\frac{3}{2}} & \color{blue} M_{-\frac{1}{2},\frac{1}{2}|\frac{1}{2}} \\
                            
                    & M_{\frac{3}{2},-\frac{1}{2}|\frac{1}{2}} & \color{blue} M_{\frac{3}{2},-\frac{1}{2}|\frac{3}{2}} & \color{blue} M_{\frac{1}{2},-\frac{1}{2}|\frac{1}{2}} \\
                            
                    & M_{-\frac{3}{2},\frac{3}{2}|\frac{3}{2}} & \color{blue} M_{-\frac{3}{2},\frac{1}{2}|\frac{3}{2}} & \color{brown} M_{-\frac{1}{2},\frac{3}{2}|\frac{3}{2}} \\
                            
                    & M_{\frac{3}{2},-\frac{3}{2}|\frac{3}{2}} & \color{blue} M_{\frac{1}{2},-\frac{3}{2}|\frac{3}{2}} & \color{brown} M_{\frac{3}{2},-\frac{1}{2}|\frac{3}{2}} \\
                    \midrule
                            
                    \multirow{3}{*}{$R$} 
                            
                    & M_{\frac{1}{2},\frac{3}{2}|\frac{1}{2}} & \color{blue} M_{\frac{1}{2},\frac{3}{2}|\frac{3}{2}} & \color{blue} M_{\frac{1}{2},\frac{1}{2}|\frac{1}{2}} \\
                            
                    & M_{\frac{3}{2},\frac{1}{2}|\frac{1}{2}} & \color{blue} M_{\frac{3}{2},\frac{1}{2}|\frac{3}{2}} & M_{\frac{1}{2},\frac{1}{2}|\frac{1}{2}} \\
                            
                    & M_{\frac{3}{2},\frac{3}{2}|\frac{3}{2}} & M_{\frac{1}{2},\frac{3}{2}|\frac{3}{2}} & M_{\frac{3}{2},\frac{1}{2}|\frac{3}{2}} \\
                    \bottomrule
                \end{array}\]
                \caption{Calculation of $P_{-\frac{3}{2},-\frac{1}{2}|\frac{1}{2}}$}
                \label{tab:-3-11}
            \end{table}
                    
            We find indecomposable projectives in this projection starting with the lowest term. First, $P_{\mu}$ appears, colored red. The next lowest is $M_{-\frac{3}{2},\frac{1}{2}|-\frac{3}{2}}$, so $P_{-\frac{3}{2},\frac{1}{2}|-\frac{3}{2}}$ must appear, colored blue. Since $Q$ does not have enough terms for this, it cannot be a projective and we must have that $P_{\lambda} = Q + R$. 
                    
            \item Here we calculate the projective $P_{\nu}$ where $\nu=\left(-\frac{1}{2},-\frac{1}{2}|\frac{1}{2}\right)$.
            If we keep isolating projectives from the projection above, we find that the next projective that must appear is $P_{-\frac{1}{2},-\frac{1}{2}|\frac{1}{2}}$. By Proposition~\ref{filprop}, the terms colored violet must appear. Of the five remaining terms, the two in the projection (colored brown) of $Q$ must also belong to $P_{-\frac{1}{2},-\frac{1}{2}|\frac{1}{2}}$ as they cannot belong to another projective, while the three in the projection of $R$ could form the projective $T=P_{\frac{1}{2},\frac{1}{2}|\frac{1}{2}}$. Thus, we have two possible standard filtrations of $P_{-\frac{1}{2},-\frac{1}{2}|\frac{1}{2}}$, which we denote by $S$ and $S+T$. Now, we project $S \otimes V$ and $T \otimes V$ back onto the linkage block of $\lambda = \left(-\frac{3}{2},-\frac{1}{2}|\frac{1}{2}\right)$ in Table~\ref{tab:-1-11}.
                    
            \begin{table}
                \centering
                \[\begin{array}{c | l | l l}
                    \toprule 
                    \text{Projective} & \text{Terms} & \multicolumn{2}{c}{\mathrm{pr}_{\lambda}(-\otimes V)}\\
                    \midrule
                    \multirow{5}{*}{$S$} 
                            
                    & \sum M_{-\frac{1}{2},-\frac{1}{2}|\frac{1}{2}} & \color{red}\sum M_{-\frac{3}{2},-\frac{1}{2}|\frac{1}{2}} & \\
                            
                    & M_{-\frac{1}{2},\frac{1}{2}|-\frac{1}{2}} & \color{red}M_{-\frac{3}{2},\frac{1}{2}|-\frac{1}{2}} & \color{red}M_{-\frac{1}{2},\frac{3}{2}|-\frac{1}{2}} \\
                            
                    & M_{\frac{1}{2},\pm\frac{1}{2}|-\frac{1}{2}} & \color{red}M_{\frac{1}{2},\pm\frac{3}{2}|-\frac{1}{2}} & \color{red}M_{\frac{3}{2},\pm\frac{1}{2}|-\frac{1}{2}} \\
                            
                    & M_{-\frac{1}{2},\frac{3}{2}|\frac{3}{2}} & \color{red}M_{-\frac{3}{2},\frac{3}{2}|\frac{3}{2}} & \color{red}M_{-\frac{1}{2},\frac{3}{2}|\frac{1}{2}}\\
                            
                    & M_{\frac{3}{2},-\frac{1}{2}|\frac{3}{2}} & \color{red}M_{\frac{3}{2},-\frac{3}{2}|\frac{3}{2}} & \color{red}M_{\frac{3}{2},-\frac{1}{2}|\frac{1}{2}}\\
                            
                    \midrule
                            
                    \multirow{3}{*}{$T$} 
                    & M_{\frac{1}{2},\frac{1}{2}|\frac{1}{2}} & \color{red}M_{\frac{1}{2},\frac{3}{2}|\frac{1}{2}} & \color{red}M_{\frac{3}{2},\frac{1}{2}|\frac{1}{2}}\\
                            
                    & M_{\frac{1}{2},\frac{3}{2}|\frac{3}{2}} & M_{\frac{1}{2},\frac{3}{2}|\frac{1}{2}} & \color{red}M_{\frac{3}{2},\frac{3}{2}|\frac{3}{2}}\\
                            
                    & M_{\frac{3}{2},\frac{1}{2}|\frac{3}{2}} & M_{\frac{3}{2},\frac{1}{2}|\frac{1}{2}} & M_{\frac{3}{2},\frac{3}{2}|\frac{3}{2}}\\
                    \bottomrule
                \end{array}\]
                \caption{Calculation of $P_{-\frac{1}{2},-\frac{1}{2}|\frac{1}{2}}$}
                \label{tab:-1-11}
            \end{table}
                    
            By Lemma~\ref{lowest}, $P_{\lambda}$ must appear in the projection, and we color its terms red. We see that $S$ does not have enough terms and thus cannot be a projective. Thus, $P_{\nu} = S+T$, and 
            \[\mathrm{pr}_{\lambda}((S+T)\otimes V) = P_{-\frac{3}{2},-\frac{1}{2}|\frac{1}{2}} + P_{\frac{1}{2},\frac{3}{2}|\frac{1}{2}}.\]

            \item If $a<-\frac{3}{2}$, we project $P_{a,\frac{1}{2}|\frac{1}{2}} \otimes V$ onto the $\lambda$ linkage block. Similar to the previous case, we obtain two possible standard filtrations, denoted $Q(a)$ and $(Q+R)(a)$. Now, we consider the specific case of $a=-\frac{5}{2}$, and we project the corresponding $Q(-\frac{5}{2}) \otimes V$ and $R(-\frac{5}{2}) \otimes V$ onto the $\left(-\frac{3}{2},-\frac{1}{2}|\frac{1}{2}\right)$ block. It turns out that $Q(-\frac{5}{2})$ is not a projective and
            \[\mathrm{pr}_{-\frac{3}{2},-\frac{1}{2},\frac{1}{2}}((Q+R) \otimes V) = P_{-\frac{5}{2},\frac{3}{2}|\frac{5}{2}} + P_{-\frac{3}{2},-\frac{1}{2}|\frac{1}{2}}.\]
            Thus, $P_{-\frac{5}{2},\frac{1}{2}|\frac{1}{2}} = (Q+R)(-\frac{5}{2})$. Then, we proceed by induction, projecting $Q(a-1) \otimes V$ and $R(a-1) \otimes V$ onto the $\left(a,\frac{1}{2}|\frac{1}{2}\right)$ block. Since we find that the projection of $(Q+R)(a-1) \otimes V$ is equal to $P_{a,\frac{1}{2}|\frac{1}{2}} = (Q+R)(a)$, $Q(a-1)$ does not have enough terms and thus is not a projective. This way, we show that $P_{a,\frac{1}{2}|\frac{1}{2}} = (Q+R)(a)$ for all $a<-\frac{3}{2}$.
        \end{enumerate}

        \item When $\lambda=(a,a|-a)$:
        \begin{enumerate}[label=(\theenumi.\arabic*), ref=\arabic*] 
            \item If $a<-\frac{1}{2}$,
            \[\mathrm{pr}_{\lambda}\left(P_{a+1,a+1|-a} \otimes S^2 V \right) = \sum M_{a,a|-a} + \sum M_{a,a+1|-a-1}.\]
            By Lemma~\ref{lowest} and Proposition~\ref{filprop}, the projection is equal to $P_{\lambda}$.
                
            \item The case $\lambda=\left(-\frac{1}{2},-\frac{1}{2}|\frac{1}{2}\right)$ was resolved above.
        \end{enumerate}
        
        \item When $\lambda=(a,a|a)$:
        \begin{enumerate}[label=(\theenumi.\arabic*), ref=\arabic*]  
            \item If $a<-\frac{1}{2}$,
            \[\mathrm{pr}_{\lambda}\left(P_{a+1,a+1|a} \otimes S^2 V \right) = \sum M_{a,a|a} + \sum M_{a,a+1|a+1}.\]
            By Lemma~\ref{lowest} and Proposition~\ref{filprop}, the projection is equal to $P_{\lambda}$.
                
            \item If $a=-\frac{1}{2}$,
            \begin{align*}
                \mathrm{pr}_{\lambda}\left(P_{-\frac{1}{2},-\frac{1}{2}|\frac{1}{2}} \otimes V \right) 
                &= 3 \sum M_{-\frac{1}{2},-\frac{1}{2}|-\frac{1}{2}} \\
                &+ 2 \left(\sum M_{-\frac{1}{2},\frac{1}{2}|\frac{1}{2}} + M_{\frac{1}{2},\frac{1}{2}|-\frac{1}{2}} + \sum M_{-\frac{1}{2},\frac{3}{2}|\frac{3}{2}} \right).
            \end{align*}
            Since the lowest term is $M_{-\frac{1}{2},-\frac{1}{2}|-\frac{1}{2}}$ and it appears with multiplicity $3$, by Lemme~\ref{lowest}, $P_{\lambda}$ must appear three times in the projection. By Proposition~\ref{filprop}, $\sum M_{-\frac{1}{2},-\frac{1}{2}|-\frac{1}{2}}$ belongs to $P_{\lambda}$. Since these are also the only terms with multiplicity at least $3$, 
            \[P_{\lambda} = \sum M_{-\frac{1}{2},-\frac{1}{2}|-\frac{1}{2}}.\]
            It turns out that
            \[\mathrm{pr}_{\lambda}\left(P_{-\frac{1}{2},-\frac{1}{2}|\frac{1}{2}} \otimes V \right) = 3P_{-\frac{1}{2},-\frac{1}{2}|-\frac{1}{2}} + 2P_{-\frac{1}{2},\frac{1}{2}|\frac{1}{2}}.\]
        \end{enumerate}
        
        \item When $\lambda = \left(a,-\frac{1}{2}|-\frac{1}{2}\right)$, we first consider the base case where $a=-\frac{3}{2}$. We have that
        \[\mathrm{pr}_{\left(-\frac{3}{2},-\frac{1}{2}|-\frac{1}{2}\right)}\left(P_{-\frac{1}{2},-\frac{1}{2}|-\frac{1}{2}} \otimes V \right) = \sum M_{-\frac{3}{2},-\frac{1}{2}|-\frac{1}{2}}.\]
        By Lemma~\ref{lowest} and Proposition~\ref{filprop}, the projection is equal to $P_{-\frac{3}{2},-\frac{1}{2}|-\frac{1}{2}}$. Now, we proceed by induction,
        \[\mathrm{pr}_{\left(a,-\frac{1}{2}|-\frac{1}{2}\right)}\left(P_{a+1,-\frac{1}{2}|-\frac{1}{2}} \otimes V \right) = \sum M_{a,-\frac{1}{2}|-\frac{1}{2}},\]
        and get By Proposition~\ref{filprop} that 
        \[P_{a,-\frac{1}{2}|-\frac{1}{2}} = \sum M_{a,-\frac{1}{2}|-\frac{1}{2}}\]
        for all $a<-\frac{1}{2}$ (in fact, for $a=-\frac{1}{2}$ as well, as shown in the previous subcase).

        \item When $\lambda = \left(-\frac{1}{2},b|\frac{1}{2}\right)$
        \begin{enumerate}[label=(\theenumi.\arabic*), ref=\arabic*]     
            \item If $b=-\frac{3}{2}$, we have 
            \begin{align*}
                \mathrm{pr}_{\lambda}\left(P_{\frac{1}{2},-\frac{3}{2}|\frac{1}{2}} \otimes V\right)
                &= \sum M_{-\frac{1}{2},-\frac{3}{2}|\frac{1}{2}} + \color{red}M_{\frac{3}{2},-\frac{1}{2}|\frac{1}{2}} + \color{red}M_{\frac{3}{2},\frac{1}{2}|\frac{1}{2}} \\
                &+ M_{\frac{1}{2},-\frac{3}{2}|-\frac{1}{2}} + M_{\frac{1}{2},\frac{3}{2}|-\frac{1}{2}} + M_{\frac{3}{2}, -\frac{1}{2}|-\frac{1}{2}} + M_{\frac{3}{2}, \frac{1}{2}|-\frac{1}{2}} \\
                &+ \color{red}M_{\frac{3}{2},-\frac{3}{2}|\frac{3}{2}} + \color{red}M_{\frac{3}{2},\frac{3}{2}|\frac{3}{2}}.
            \end{align*}
            By Proposition~\ref{filprop}, all terms except for the four colored red must appear in $P_{\lambda}$. Proceeding from the lowest remaining term, we can show that each of the remaining term must appear in $P_{\lambda}$ since no other projective can form in the projection.
                
            \item If $b<-\frac{3}{2}$, we have
            \begin{align*}
                \mathrm{pr}_{\lambda}\left(P_{\frac{1}{2},b|\frac{1}{2}} \otimes V\right)
                &= \sum M_{-\frac{1}{2},b|\frac{1}{2}} + \color{red}M_{-b,-\frac{1}{2}|\frac{1}{2}} + \color{red}M_{-b,\frac{1}{2}|\frac{1}{2}} \\
                &+ M_{\frac{1}{2},b|-\frac{1}{2}} + M_{\frac{1}{2},-b|-\frac{1}{2}} + M_{-b, -\frac{1}{2}|-\frac{1}{2}} + M_{-b, \frac{1}{2}|-\frac{1}{2}} \\
                &+ M_{\frac{3}{2},b|\frac{3}{2}} + M_{\frac{3}{2},-b|\frac{3}{2}} + \color{red}M_{-b,-\frac{3}{2}|\frac{3}{2}} + \color{red}M_{-b,\frac{3}{2}|\frac{3}{2}}.
            \end{align*}
            By a similar argument as above, we can show that all terms except for the four terms colored red, which can form the projective $P_{-b,-\frac{3}{2}|\frac{3}{2}}$, must appear in $P_{\lambda}$. We now have two possible standard filtrations for $P_{\lambda}$. As usual, call them $Q(b)$ and $(Q+R)(b)$. First we consider the case $b=-\frac{5}{2}$. We project $Q(-\frac{5}{2}) \otimes V$ and $R(-\frac{5}{2}) \otimes V$ back to the $\left( -\frac{1}{2},-\frac{3}{2}|\frac{1}{2} \right)$ block in Table~\ref{tab:-1-51}.
                
            \begin{table}
                \centering
                \[\begin{array}{c | l | l l}
                    \toprule 
                    \text{Projective} & \text{Terms} & \multicolumn{2}{c}{\mathrm{pr}_{-\frac{1}{2},-\frac{3}{2}|\frac{1}{2}}(-\otimes V)}\\
                    \midrule
                    \multirow{4}{*}{$Q(-\frac{5}{2})$} 
                        
                    & \sum M_{-\frac{1}{2},-\frac{5}{2}|\frac{1}{2}} & \color{red}\sum M_{-\frac{1}{2},-\frac{3}{2}|\frac{1}{2}} & \\
                        
                    & M_{\frac{1}{2},\pm \frac{5}{2}|-\frac{1}{2}} & \color{red}M_{\frac{1}{2},\pm \frac{3}{2}|-\frac{1}{2}} & \\
                        
                    & M_{\frac{5}{2},\pm \frac{1}{2}|-\frac{1}{2}} & \color{red}M_{\frac{3}{2},\pm \frac{1}{2}|-\frac{1}{2}} & \\
                        
                    & M_{\frac{3}{2},\pm \frac{5}{2}|\frac{3}{2}} & \color{red}M_{\frac{3}{2},\pm \frac{3}{2}|\frac{3}{2}} & M_{\frac{3}{2},\pm \frac{5}{2}|\frac{5}{2}} \\
                        
                    \midrule
                        
                    \multirow{2}{*}{$R(-\frac{5}{2})$} 
                    & M_{\frac{5}{2},\pm \frac{3}{2}|\frac{3}{2}} & M_{\frac{3}{2},\pm \frac{3}{2}|\frac{3}{2}} & M_{\frac{5}{2},\pm \frac{3}{2}|\frac{5}{2}} \\
                    & M_{\frac{5}{2},\pm \frac{1}{2}|\frac{1}{2}} & \color{red}M_{\frac{3}{2},\pm \frac{1}{2}|\frac{1}{2}} & \\
                    \bottomrule
                \end{array}\]
                \caption{Calculation of $P_{-\frac{1}{2},-\frac{5}{2}|\frac{1}{2}}$}
                \label{tab:-1-51}
            \end{table}
                
            Since the lowest term appearing in the projection is $M_{-\frac{1}{2},-\frac{3}{2}|\frac{1}{2}}$, the projective $P_{-\frac{1}{2},-\frac{3}{2}|\frac{1}{2}}$ must appear in the projection, with its terms colored red. We see that $Q$ again does not have enough terms and is therefore not a projective. Thus, $P_{-\frac{1}{2},-\frac{5}{2}|\frac{1}{2}} = (Q+R)(-\frac{5}{2})$. As we have the base case now, we may proceed by induction. By projecting $Q(b-1) \otimes V$ and $R(b-1) \otimes V$ onto the $\left(-\frac{1}{2},b|\frac{1}{2}\right)$ block, we see that $Q(b-1)$ does not have enough terms and
            \[\mathrm{pr}_{-\frac{1}{2},b|\frac{1}{2}}((Q+R)(b-1) \otimes V) = (Q+R)(b).\]
            Thus, for all $b<-\frac{3}{2}$, $P_{-\frac{1}{2},b|\frac{1}{2}} = (Q+R)(b)$.
        \end{enumerate}
        
        \item When $\lambda = \left(-\frac{1}{2},b|-\frac{1}{2}\right)$, we start with the case $b=-\frac{3}{2}$. We project $P_{-\frac{1}{2},-\frac{3}{2}|\frac{1}{2}} \otimes V$ onto the $\lambda$ block. We get that
        \begin{align*}
            \mathrm{pr}_{\lambda}\left(P_{-\frac{1}{2},-\frac{3}{2}|\frac{1}{2}} \otimes V\right)
            &= 2 \sum M_{-\frac{1}{2},-\frac{3}{2}|-\frac{1}{2}} + 2 \sum M_{\frac{3}{2},-\frac{1}{2}|-\frac{1}{2}} \\
            &+ \sum M_{\frac{1}{2},-\frac{3}{2}|\frac{1}{2}} + M_{\frac{3}{2},-\frac{3}{2}|\frac{3}{2}} + M_{\frac{3}{2},\frac{3}{2}|\frac{3}{2}}.
        \end{align*}
        First, by Lemma~\ref{lowest} and Proposition~\ref{filprop}, two copies of $P_{\lambda}$ must appear in the projection and the terms in $\sum M_{-\frac{1}{2},-\frac{3}{2}|-\frac{1}{2}}$ belong to $P_{\lambda}$. Now, the lowest remaining term is $M_{\frac{1}{2},-\frac{3}{2}|\frac{1}{2}}$, and since only one copy of it remains, it cannot belong to $P_{\lambda}$, which means $P_{\frac{1}{2},-\frac{3}{2}|\frac{1}{2}}$ must appear in the projection as a separate projective. Now, the only remaining terms are the two copies of $R(-\frac{3}{2})=\sum M_{\frac{3}{2},-\frac{1}{2}|-\frac{1}{2}}$, which could form $P_{\frac{3}{2},-\frac{1}{2}|-\frac{1}{2}}$. Thus, we again face two possibilities for $P_{\lambda}$, namely, $Q(-\frac{3}{2})=\sum M_{-\frac{1}{2},-\frac{3}{2}|-\frac{1}{2}}$ and $(Q+R)(-\frac{3}{2})$.
                
        Now, by projecting $Q(-\frac{3}{2}) \otimes V$ and $R(-\frac{3}{2}) \otimes V$ onto the $\left(-\frac{1}{2},-\frac{5}{2}|-\frac{1}{2}\right)$ block, we see that $P_{-\frac{1}{2},-\frac{5}{2}|\frac{1}{2}}$ also has two possible standard filtrations $Q(-\frac{5}{2})$ and $(Q+R)(-\frac{5}{2})$, defined similarly. In addition, $P_{-\frac{1}{2},-\frac{5}{2}|\frac{1}{2}} = Q(-\frac{5}{2})$ if and only if $P_{-\frac{1}{2},-\frac{3}{2}|\frac{1}{2}} = Q(-\frac{3}{2})$, as otherwise when we project the shorter projective onto the block of the longer, there would not be enough terms. The same argument carries as we induct on $b$. Thus, it remains to find the correct filtration for any specific value of $b$.
                
        We examine $\mu = \left(-\frac{1}{2},-\frac{5}{2}|-\frac{1}{2}\right)$ and show that $P_{\mu} = Q(-\frac{5}{2})$. Consider the projection $\mathrm{pr}_{\mu}\left(P_{-\frac{3}{2},-\frac{5}{2}|-\frac{3}{2}} \otimes S^2 V\right)$, which has $180$ terms. By applying Lemma~\ref{lowest}, we find that $P_{-\frac{5}{2},-\frac{5}{2}|-\frac{5}{2}}$ and four copies of $P_{-\frac{3}{2},-\frac{5}{2}|-\frac{3}{2}}$ must appear in the projection. Now, $60$ terms remain, and the lowest term is $M_{-\frac{1}{2},-\frac{5}{2}|-\frac{1}{2}}$, which appear $4$ times, which means that $P_{\mu}$ must appear four times. However, $(Q+R)(-\frac{5}{2})$ has $16$ terms and thus does not fit. Thus, 
        \[P_{-\frac{1}{2},b|-\frac{1}{2}} = Q(b) = \sum M_{-\frac{1}{2},b|-\frac{1}{2}}\]
        for all $b<-\frac{1}{2}$.
        It turns out that
        \begin{align*}
            \mathrm{pr}_{\mu}\left(P_{-\frac{3}{2},-\frac{5}{2}|-\frac{3}{2}} \otimes S^2 V\right) 
            &= P_{-\frac{5}{2},-\frac{5}{2}|-\frac{5}{2}} + 4P_{-\frac{3}{2},-\frac{5}{2}|-\frac{3}{2}} \\
            &+ 4P_{-\frac{1}{2},-\frac{5}{2}|-\frac{1}{2}} + P_{\frac{3}{2},-\frac{5}{2}|-\frac{3}{2}}.
        \end{align*}

        \item When $\lambda = (a,a-1|-a+1)$:
        \begin{enumerate}[label=(\theenumi.\arabic*), ref=\arabic*]
            \item If $a<-\frac{1}{2}$,
                \begin{align*}
                    \mathrm{pr}_{\lambda}\left(P_{a+1,a-1|-a+1} \otimes V\right) 
                    &= \sum M_{a,a-1|-a+1} + \sum M_{a+1,a|-a-1} \\
                    &+ \sum M_{a,a|-a} + \sum M_{-a,a|-a}.
                \end{align*}
                By Proposition~\ref{filprop}, all terms except for $\sum M_{-a,a|-a}$ must belong to $P_{\lambda}$. As no projective can form among the remaining two terms, the projection is equal to $P_{\lambda}$.
                
            \item If $a=-\frac{1}{2}$,
            \begin{align*}
                \mathrm{pr}_{\lambda}\left(P_{\frac{1}{2},-\frac{3}{2}|\frac{3}{2}} \otimes V\right) 
                &= \sum M_{-\frac{1}{2},-\frac{3}{2}|\frac{3}{2}} + M_{\frac{3}{2},-\frac{1}{2}|\frac{3}{2}} + M_{\frac{3}{2},\frac{1}{2}|\frac{3}{2}} \\
                &+ \sum M_{-\frac{1}{2},-\frac{1}{2}|\frac{1}{2}} + \sum M_{\frac{1}{2},-\frac{1}{2}|-\frac{1}{2}}.
            \end{align*}
            By Proposition~\ref{filprop}, $\sum M_{-\frac{1}{2},-\frac{3}{2}|\frac{3}{2}}$ and $\sum M_{-\frac{1}{2},-\frac{1}{2}|\frac{1}{2}}$ must belong to $P_{\lambda}$. As no projective can form among the remaining six terms, the projection is equal to $P_{\lambda}$.
        \end{enumerate}
            
        \item The proof for the case $\lambda = (a,a-1|a-1)$ follows similarly as the previous case. 
    \end{enumerate}
\end{proof}

\section{Jordan-H{\"o}lder Formulae for \texorpdfstring{$\osp(3|4)$}{osp(3|4)}}\label{sec7}
By BGG reciprocity, we can convert the standard filtration multiplicities for projective modules into Jordan-H{\"o}lder multiplicities of irreducible modules for Verma modules.

Let $\lambda \in X+\rho$ such that $\lambda-\rho$ is atypical, integral, and antidominant. Let $\mathcal{W}^{\lambda}$ be a minimal set of left-coset representatives of $\mathcal{W}/\mathcal{W}_{\lambda}$. Then, by applying the BGG reciprocity to Proposition~\ref{filprop}, we immediately get that the composition series of $M_{\sigma \lambda}$ ($\sigma \in \mathcal{W}^{\lambda}$) must include
\[
    \sum_{\tau \leq \sigma, \tau \in \mathcal{W}^{\lambda}} \left(L_{\tau \lambda} + L_{\tau \lambda - \alpha} + L_{\tau \lambda - \alpha - \beta}\right),
\]
where each term in the sum appears with multiplicity one only if it is linked to $\lambda$, and $\alpha,\beta \in {\Phi_{\overline{1}}}^+$ and $\mathrm{ht}(\alpha)>\mathrm{ht}(\beta)$.
For convenience, we denote this summation by
\[
    \sum L_{\sigma \lambda}.
\]
The following theorem follows from applying the BGG reciprocity to the character formulae we obtained in Theorems~\ref{thm:spo431} to \ref{thm:spo434}.
\begin{thm}\label{jordanholder}
    Let $\lambda-\rho = (a,b \ | \ c)-\rho$ be an atypical weight with $a,b,c \in \frac{1}{2} + \mathbb{Z}$ and $c \in \{\pm a, \pm b\}$. The Verma modules $M_{\lambda}$ of highest weight $\lambda-\rho$ have Jordan-H{\"o}lder formulae
    \[
        M_{\lambda} = \sum L_{\lambda}
    \]
    except in the following cases.
    \begin{enumerate}[label=(\arabic*)]
        \item Suppose that $\lambda-\rho=\left(a',b'|\frac{3}{2}\right)-\rho$ is atypical, and at least one of $a',b'$ is positive. Since at least one of $|a'|,|b'|$ is equal to $\frac{3}{2}$, suppose that $\{|a'|,|b'|\} = \{a, \frac{3}{2}\}$. Then, unless specified otherwise in cases below, $M_{\lambda}$ has the following composition series:
        \[
            M_{\lambda} = \sum L_{\lambda} + \sum_{*} L_{a,-\frac{1}{2}|\frac{1}{2}},
        \]
        where $\sum_{*} L_{a,-\frac{1}{2}|\frac{1}{2}}$ denotes the the sum of the terms in the set \[\{L_{-a,-\frac{1}{2}|\frac{1}{2}}, L_{a,-\frac{1}{2}|\frac{1}{2}}, L_{-\frac{1}{2},-a|\frac{1}{2}}, L_{-\frac{1}{2},a|\frac{1}{2}}\}\]
        that are lower than $L_{\lambda}$. The following subcases are exceptions to this case, which contain some additional terms than those given above.
        \begin{enumerate}[label=(\roman*)]
            \item When $\lambda=\left(\frac{3}{2},-\frac{1}{2}|\frac{3}{2}\right)$. We have
            \[
                M_{\lambda} = \sum L_{\lambda} + L_{-\frac{1}{2},-\frac{1}{2}|\frac{1}{2}} + \red{L_{-\frac{1}{2},-\frac{3}{2}|\frac{3}{2}}},
            \]
            where we use red to emphasize terms with multiplicity two (its first copy appears in $\sum L_{\lambda}$).
            
            \item When $\lambda=\left(\frac{3}{2},\frac{1}{2}|\frac{3}{2}\right)$. We have
            \[
                M_{\lambda} = \sum L_{\lambda} + L_{-\frac{1}{2},-\frac{1}{2}|\frac{1}{2}} + \red{L_{-\frac{1}{2},-\frac{3}{2}|\frac{3}{2}} + L_{\frac{1}{2},-\frac{1}{2}|\frac{1}{2}}}.
            \]
            
            \item When $\lambda=\left(\frac{3}{2},-\frac{3}{2}|\frac{3}{2}\right)$. We have 
            \begin{align*}
                M_{\lambda} 
                &= \sum L_{\lambda} + L_{-\frac{1}{2},-\frac{3}{2}|\frac{1}{2}} + L_{-\frac{3}{2},-\frac{1}{2}|\frac{1}{2}}\\
                &+ \red{L_{-\frac{3}{2},-\frac{5}{2}|\frac{5}{2}} + L_{-\frac{3}{2},-\frac{5}{2}|-\frac{5}{2}}}.
            \end{align*}
            
            \item When $\lambda=\left(\frac{3}{2},\frac{3}{2}|\frac{3}{2}\right)$. We have 
            \begin{align*}
                M_{\lambda} 
                &= \sum L_{\lambda} + \sum_{*} L_{\frac{3}{2},-\frac{1}{2}|\frac{1}{2}}\\
                &+ \red{L_{-\frac{3}{2},\frac{3}{2}|\frac{3}{2}} + L_{-\frac{3}{2},\frac{3}{2}|-\frac{3}{2}} + L_{-\frac{3}{2},-\frac{5}{2}|\frac{5}{2}} + L_{-\frac{3}{2},-\frac{5}{2}|-\frac{5}{2}}}.
            \end{align*}
        \end{enumerate}
        
        \item Suppose that $\lambda-\rho=\left(a',b'|\frac{1}{2}\right)-\rho$ is atypical, and at least one of $a'$, $b'$ is greater than $\frac{1}{2}$. 
        \begin{enumerate}[label=(\roman*)]
            \item When $\lambda=\left(-\frac{1}{2},b|\frac{1}{2}\right)$ or $\lambda=\left(\frac{1}{2},b|\frac{1}{2}\right)$ with $b>\frac{1}{2}$. We have 
            \[
                M_{\lambda} = \sum L_{\lambda} + \red{L_{-b,-\frac{1}{2}|\frac{1}{2}}}.
            \]
            
            \item Suppose that $\lambda=\left(a,-\frac{1}{2}|\frac{1}{2}\right)$ or $\lambda=\left(a,\frac{1}{2}|\frac{1}{2}\right)$ with $a>\frac{1}{2}$. We have 
            \[
                M_{\lambda} = \sum L_{\lambda} + \red{L_{-\frac{1}{2},-a|\frac{1}{2}} + L_{-a,-\frac{1}{2}|\frac{1}{2}}}.
            \]
        \end{enumerate}
        
        \item Suppose that $\lambda-\rho=(a,b \ | \ c)-\rho$ is atypical, and $a=|b|=|c|$.
        \begin{enumerate}[label=(\roman*)]
            \item When $\lambda=(a,-a|-a)$, we have
            \[
                M_{\lambda} = \sum L_{\lambda} + \red{L_{-a,-a-1|-a-1}}.
            \]
            \item When $\lambda=(a,-a|a)$ and $a\neq \frac{3}{2}$, we have
            \[
                M_{\lambda} = \sum L_{\lambda} + \red{L_{-a,-a-1|-a-1} + L_{-a,-a-1|a+1}}.
            \]
            \item The case $\lambda=\left(\frac{3}{2},-\frac{3}{2}|\frac{3}{2}\right)$ is given above.
            \item When $\lambda=(a,a|-a)$, we have
            \[
                M_{\lambda} = \sum L_{\lambda} + \red{L_{-a,a|-a} + L_{-a,-a-1|-a-1}}.
            \]
            \item When $\lambda=(a,-a|a)$ and $a>\frac{3}{2}$, we have
            \[
                M_{\lambda} = \sum L_{\lambda} + \red{L_{-a,a|-a} + L_{-a,a|a} + L_{-a,-a-1|-a-1} + L_{-a,-a-1|a+1}}.
            \]
            \item The case $\lambda=\left(\frac{3}{2},\frac{3}{2}|\frac{3}{2}\right)$ is given above.
            \item When $\lambda=\left(\frac{1}{2},\frac{1}{2}|\frac{1}{2}\right)$, we have
            \[
                M_{\lambda} = \sum L_{\lambda} + \red{L_{-\frac{1}{2},\frac{1}{2}|-\frac{1}{2}} + L_{-\frac{1}{2},-\frac{1}{2}|\frac{1}{2}} + L_{-\frac{1}{2},-\frac{3}{2}|-\frac{3}{2}} + L_{-\frac{1}{2},-\frac{3}{2}|\frac{3}{2}}}.
            \]
        \end{enumerate}
        \item When $\lambda=\left(\frac{5}{2},\frac{1}{2}|\frac{5}{2}\right)$, we have
        \[
            M_{\lambda} = \sum L_{\lambda} + L_{\frac{1}{2},-\frac{1}{2}|\frac{1}{2}}.
        \]
        \item When $\lambda=\left(\frac{5}{2},\frac{3}{2}|\frac{5}{2}\right)$, we have
        \[
            M_{\lambda} = \sum L_{\lambda} + L_{\frac{3}{2},-\frac{1}{2}|\frac{1}{2}}.
        \]

    \end{enumerate}
\end{thm}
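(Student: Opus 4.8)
The plan is to obtain every composition series by transposing, through BGG reciprocity \eqref{BGGrecip}, the \emph{exact} Verma-flag data already recorded in Theorems~\ref{thm:spo431}--\ref{thm:spo434}. Reciprocity reads $[M_\mu : L_\lambda] = (P_\lambda : M_\mu)$, and since Section~\ref{sec5} determines the multiplicity $(P_\lambda : M_\mu)$ exactly for every atypical integral $\lambda$, the transpose determines $[M_\mu : L_\lambda]$ exactly for every atypical integral $\mu$, with no residual inequalities. Operationally, to compute the composition series of a fixed $M_\mu$ I would scan the list of projective covers from Section~\ref{sec5}, select those $P_\lambda$ whose standard filtration contains $M_\mu$, and attach to $M_\mu$ one factor $L_\lambda$ for each such occurrence, weighted by its multiplicity in the flag. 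The theorem is therefore the same body of data as Section~\ref{sec5}, reorganized by the subscript $\mu$ of the Verma module rather than by the subscript $\lambda$ of the projective.

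First I would establish the generic identity $M_\mu = \sum L_\mu$. The inclusion $M_\mu \supseteq \sum L_\mu$ is already in hand from the discussion preceding the theorem, where BGG reciprocity is applied to the isotropic-root items of Proposition~\ref{filprop}: each such lower bound on $(P_\lambda : M_\mu)$ transposes to a guaranteed factor $L_\lambda$ in $M_\mu$, and these assemble exactly into $\sum L_\mu$. To upgrade the inclusion to an equality for a generic weight $\mu$, I would invoke the exactness of the flags in Theorems~\ref{thm:spo431}--\ref{thm:spo434}: when every $P_\lambda$ linked to $\mu$ has the purely generic two-block flag $\sum M_{\sigma\lambda} + \sum M_{\sigma'\lambda'}$, no projective outside the family producing $\sum L_\mu$ can contain $M_\mu$, so no further composition factors appear. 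This disposes of all $\mu$ not named among the exceptions.

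Next I would harvest the exceptions, which correspond one-for-one to the non-generic projectives of Section~\ref{sec5}: those whose flags either carry a Verma module of multiplicity two (e.g. the term $2M_{\frac{3}{2},\frac{1}{2}|\frac{3}{2}}$ in Theorem~\ref{thm:spo432}(3.2)) or carry an extra Verma module beyond the generic $\sum$-pattern (e.g. the solitary $M_{\frac{5}{2},\frac{3}{2}|\frac{5}{2}}$ in the $a=\tfrac32$ subcase of Theorem~\ref{thm:spo432}(1.3), or the $M_{a,\frac{3}{2}|\frac{3}{2}}$-type terms attached to $P_{a,-\frac12|\frac12}$). For each such $P_\lambda$ and each $M_\mu$ appearing in its flag above the generic count, reciprocity deposits a matching extra $L_\lambda$ inside $M_\mu$. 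Sorting these deposits by the target weight $\mu$ produces exactly the enumerated cases: the $c=\tfrac32$ family and its four multiplicity-two subcases in (1), the $c=\tfrac12$ families in (2), the diagonal families $a=|b|=|c|$ in (3), and the two isolated weights $\bigl(\tfrac52,\tfrac12\mid\tfrac52\bigr)$ and $\bigl(\tfrac52,\tfrac32\mid\tfrac52\bigr)$ in (4) and (5). A factor printed in red is one whose weight is already named once by $\sum L_\mu$ and supplied a second time by an exceptional projective; in each such case I would merely verify that the generic and exceptional contributions point to the same weight, confirming total multiplicity two.

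The principal obstacle is completeness of the bookkeeping rather than any isolated computation. Because each atypical block is infinite but translation-equivariant, I must argue that for every target $M_\mu$ I have found \emph{all} projectives $P_\lambda$ with $(P_\lambda : M_\mu)>0$, and in particular that no non-generic projective outside the listed boundary families contributes. I would control this by exploiting the inductive, translation-functor structure already used in Section~\ref{sec5}: the generic two-block flags propagate up each infinite family, so the non-generic behaviour is confined to finitely many base weights with small coordinates $a,b \in \{\pm\tfrac12,\pm\tfrac32\}$, where the block geometry degenerates through coincidences among $a$, $b$, $\pm c$ or collisions with the $\mathcal W_{\mathfrak{so}(3)}$-reflection. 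Inspecting only those finitely many degenerate shapes, and checking that they transpose to precisely cases (1)--(5), completes the argument; the delicate points are keeping the red double-multiplicities consistent and not double-counting a factor that is simultaneously generic for one block representative and exceptional for another.
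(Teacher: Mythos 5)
Your proposal matches the paper's argument: the paper proves Theorem~\ref{jordanholder} precisely by transposing the exact Verma-flag multiplicities of Theorems~\ref{thm:spo431}--\ref{thm:spo434} through BGG reciprocity \eqref{BGGrecip}, with the generic part $\sum L_\lambda$ obtained (as you note) by applying reciprocity to Proposition~\ref{filprop} and the exceptional cases read off from the non-generic flags. Your additional completeness check --- that for fixed $\mu$ only the finitely many linked $\lambda \leq \mu$ can satisfy $(P_\lambda : M_\mu) > 0$, all of whose flags the paper determines exactly --- is a correct and slightly more explicit rendering of the same bookkeeping the paper leaves implicit.
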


%%%%%%%%%%%%%%

\printbibliography

\end{document}